\documentclass[12pt]{article}

\usepackage{authblk}
\usepackage{geometry}
\usepackage{subcaption}
\usepackage{graphicx}
\usepackage{epstopdf}
\usepackage{xcolor}

\title{The Phase Transition in Online PCA Depends on $n/d\log(d)$, not $n/d$}
\author{
    Apratim Dey\thanks{Email: \texttt{dey@wustl.edu}}
}
\affil{\textit{Washington University in St. Louis}}

\usepackage{amsthm,amsmath,amsfonts,bm}

\newtheorem{theorem}{Theorem}[section]  
\newtheorem{lemma}[theorem]{Lemma}      

\usepackage{algorithm}
\usepackage{algpseudocode} 
\usepackage{xcolor}

\newcommand{\BBP}{\text{BBP}}
\newcommand{\PCA}{\text{PCA}}

\newcommand{\sph}{\text{sph}}

\def\eqref#1{equation~\ref{#1}}

\def\1{\bm{1}}

\DeclareMathAlphabet{\mathsfit}{\encodingdefault}{\sfdefault}{m}{sl}
\SetMathAlphabet{\mathsfit}{bold}{\encodingdefault}{\sfdefault}{bx}{n}

\def\gF{{\mathcal{F}}}

\def\gN{{\mathcal{N}}}

\def\sP{{\mathbb{P}}}

\def\sR{{\mathbb{R}}}
\def\sS{{\mathbb{S}}}

\newcommand{\E}{\mathbb{E}}

\DeclareMathOperator{\sign}{sign}

\usepackage{hyperref}
\usepackage{url}
\usepackage{graphicx}
\usepackage{placeins}
\usepackage{float}
\usepackage{natbib}

\begin{document}

\maketitle

\begin{abstract}
High dimensional statistical theory has established the importance of constant aspect ratio, when the number of dimensions ($d$) and samples ($n$) satisfy $n,d\to\infty$ with $n/d\to \gamma\in(0,\infty)$, in understanding the limits of canonical estimation problems. In particular, for estimating the top eigenvector of a $d\times d$ population covariance matrix from $n$ iid samples, the BBP phase transition gives a precise threshold -- a simple functional of the aspect ratio -- such that the top sample principal component attains nonzero asymptotic correlation with the truth only when the leading population eigenvalue exceeds it. In this paper, we show that for online / streaming algorithms the story is very different, and constant aspect ratio is insufficient for nonzero overlap. We study Oja's algorithm, the most popular method for online PCA. Let $\Sigma=\theta^2 v_0v_0^\top+I\in\mathbb{R}^{d\times d}$, and run Oja's algorithm with step size $\delta/d$ on $n$ iid samples $X_k\sim\mathcal{N}(0,\Sigma)$, with output $\hat v_n$. Then, as $n,d\to\infty$ with $n/d\log d\to\gamma\in(0,\infty)$, we establish a phase transition: $|\langle\hat v_n,v_0\rangle|\to 0$ when $\gamma<\gamma_*$, and $\to\rho_*$ when $\gamma>\gamma_*$. Here $\rho_*=\rho_*(\theta,\delta)=\sqrt{(\theta^2-\delta/2)_+/\theta^2(1+\delta/2)}$ and $\gamma_*=\gamma_*(\theta,\delta)=1/\delta(2\theta^2-\delta)_+$. Further, at criticality, when $n=[\gamma_*d\log d+\eta d]$ and $d\to\infty$, $\eta\in\mathbb{R}$, the correlation is random: $|\langle\hat v_n,v_0\rangle|\stackrel{w}{\to}\rho_*|G|\exp(\eta/2\gamma_*)/\sqrt{\rho_*^4+G^2\exp(\eta/\gamma_*)}$ where $G\sim\mathcal{N}(0,1)$. This is in stark contrast to ordinary high dimensional PCA, where nonzero overlap is possible at constant $n/d$ and improves as $n/d$ increases.

\end{abstract}
\section{Introduction}\label{sec:intro}

 We consider the canonical problem of estimating the top eigenvector $v_0\in\sS^{d-1}$ of a $d\times d$ covariance matrix $\Sigma$ given iid data $X_k\sim\gN(0,\Sigma)$, $k\geq 1$. Here, $\sS^{d-1}$ denotes the $d-$dimensional sphere $\{x\in\sR^d:\|x\|_2=1\}$, and $\gN(\mu,\Sigma)$ denotes the $d-$dimensional Gaussian distribution with mean $\mu$ and covariance matrix $\Sigma$. This is a problem with decades of cultural and intellectual significance, with important applications in statistics, signal processing, genetics, etc. 

 \medskip
 
Perhaps the most standard approach for this problem is Principal Component Analysis (PCA). Given samples $X_1,\cdots, X_n$, one computes the sample covariance matrix $S_n:=\sum_{k=1}^n X_kX_k^\top/n$ and uses the sample top principal component $\hat v_n^\PCA$ as an estimator of $v_0$. For simplicity, we ignore centering as we assume each $X_k$ has mean zero. Thanks to the works of many researchers over the last few decades (see Section \ref{sec:related_works} for an account), we have a fairly complete understanding of the statistical properties of PCA today.

 \medskip
 
 By definition, $\hat v_n^\PCA$ is the leading eigenvector of $S_n$, and computing $\hat v_n^\PCA$ requires one to store and perform computations on the full matrix $S_n$. When $d$ is very large, as in many ``big data" problems such as in genomics, computing $\hat v_n^\PCA$ quickly becomes computationally prohibitive. Consequently, online algorithms that treat each $X_k$ sequentially and only updating a running estimator $\hat v_n$ of $v_0$ are quite attractive.

\medskip
 
 Perhaps the most popular online PCA algorithm is due to Oja \citep{oja1982simplified}, to be henceforth called Oja's algorithm. One starts with an initial unit vector $\hat v_0\in \sS^{d-1}$ and proceeds iteratively as follows:
\begin{align}\label{eqn:oja_main_recursion}
\begin{split}
    \tilde v_k &= \tilde v_{k-1} + (\delta/d)\langle X_k,\tilde v_{k-1}\rangle X_k,\\
    \hat v_k &= \tilde v_k/\|\tilde v_k\|_2    
\end{split}
\end{align}Here, $\delta>0$ is a step size. Naturally, each iteration is extremely lightweight, involving simple inner products, and is therefore computationally very convenient.

 \medskip

But how good is the estimator $\hat v_n$? Namely, if we started Oja's algorithm from a \textit{random} initialization $\hat v_0$, what is the resulting overlap $|\langle \hat v_n,v_0\rangle|$? Unfortunately, our knowledge of the performance of $\hat v_n$ is not at the same level as that of $\hat v_n^\PCA$. This gap in understanding widens particularly in the high dimensional setting where $d$ is large since one cannot ignore the effect of $d$ on $n$ and hence on the overlap. Moreover, it is now well known that several high dimensional problems \citep{baikbenarouspeche,donoho2009observed} exhibit a so-called phase transition -- a critical value $n_*$ such that the algorithm succeeds when $n>>n_*$ and fails with $n<<n_*$. Does Oja's algorithm also exhibit a similar phase transition phenomenon, and can we identify it?

\medskip

Our purpose in this paper is to settle this question of the phase transition in Oja's algorithm in its completeness. We suppose that the true covariance matrix $\Sigma$ has the form $\Sigma=\theta^2v_0v_0^\top + I$. This is the celebrated Johnstone spiked model \citep{johnstone2001distribution} which has played a major role in the understanding of PCA in high dimensions. Under this model, there is a single principal direction $v_0\in\sS^{d-1}$ with corresponding top eigenvalue $\theta^2+1$, rest all eigenvalues being equal to $1$.

\medskip

Our main results are the following. Let $n,d\to\infty$ such that $n/d\log d\to\gamma\in(0,\infty)$. Then, there exists a critical threshold $\gamma_*\equiv \gamma_*(\theta,\delta)\in(0,\infty)$ and a number $\rho_*\in(0,1)$ such that
\begin{align}
\begin{split}
    |\langle\hat v_n,v\rangle| \stackrel{p}{\to} \begin{cases}
        0, & \gamma < \gamma_*,\\
        \rho_*, & \gamma > \gamma_*
    \end{cases}
\end{split}
\end{align}Here, $\stackrel{p}{\to}$ denotes convergence in probability. Further, when $n=[\gamma_*d\log d+\eta d]$ ($[\cdot]$ denoting the greatest integer function), then for any $\eta\in\sR$,
\begin{align}
    |\langle \hat v_n,v_0\rangle|\stackrel{w}{\to}\dfrac{\rho_*|G|\exp(\eta/2\gamma_*)}{\sqrt{\rho_*^4 + G^2\exp(\eta/\gamma_*)}}
\end{align}where $\stackrel{w}{\to}$ denotes weak convergence and $G\sim\gN(0,1)$. Thus, we identify the precise constants $\gamma_*,\rho_*$, the precise rate $d\log d$ and the precise stochastic behavior governing the performance of Oja's algorithm.

 \medskip

This points out an important distinction with PCA. In PCA, asymptotically positive overlap \textit{is} possible when $n$ is \textit{linear} in $d$. However, for Oja's algorithm, to achieve any asymptotically positive overlap, we must have $n/d\log d$ exceed a constant. This extra $\log d$ factor is non-negotiable and not due to looseness in the proof. Thus, Oja's algorithm will be asymptotically unsuccessful when $n$ is merely linear in $d$. The extra $\log(d)$ factor is related to the large amount of stochasticity in the algorithm. Indeed, the overlap takes $O(d\log(d))$ steps to rise above the initial $O(d^{-1/2})$ noise floor to a non-vanishing value. As the proofs of our theorems will clarify, once the algorithm reaches a non-vanishing overlap, it reaches the limiting overlap $\rho_*$ pretty quickly, in $\Theta(d)$ steps.

 \medskip

The serious statistical suboptimality of Oja's algorithm over offline PCA is worth noting. One should remember that $\hat v_n^\PCA$ enjoys certain optimality properties by extracting maximum information from the data $X_1,\cdots, X_n$; for example, it is the direction of maximal sample variability, and also is the direction of the optimal rank one reconstruction of the sample covariance matrix $S_n$. On the other hand, Oja's algorithm only performs one single, rather simple operation on each new data point $X_k$. Thus, it is expected that it would need many more samples than ordinary PCA to achieve a desired overlap. Our results are able to precisely quantify this.

 \medskip

Finally, \citet{arous2021online} have studied high dimensional online stochastic gradient descent (SGD) algorithms. One of their special cases involves a variant of Oja's algorithm, which we investigate in details in Section \ref{sec:BAGJ}. Interestingly, that variant, although visibly different from Oja's algorithm, turns out to exhibit the exact same phase transition phenomenon as Oja's algorithm with the exact same $\gamma_*,\rho_*$ and asymptotic distribution at criticality.
\section{Related Works}\label{sec:related_works}

\paragraph{PCA.} Principal Component Analysis (PCA, in short) is a major workhorse of modern data science, with applications virtually everywhere. Consequently, the past decades have seen extensive research on understanding the precise behavior of PCA.

To fix ideas, we consider the Johnstone spiked model setting defined in Section \ref{sec:intro} (also see \citet{johnstone2001distribution}), where the true covariance matrix $\Sigma=\theta^2v_0v_0^\top+I$. Recall that $\hat v_n^\PCA$ is the sample top principal component used to estimate the truth $v_0$. When $d$ is fixed and $n\to\infty$, the usual law of large numbers arguments establishes consistency: $|\langle \hat v_n^\PCA,v_0\rangle|\to 1$.

However, when one considers the high dimensional setting where $n,d$ are both large, things change substantially, and this automatic consistency of the sample principal components is invalid. In fact, in the so-called \textit{proportional asymptotics} regime where $n,d\to\infty$ with $n/d\to \gamma\in (0,\infty)$, the inverse aspect ratio $\gamma$ turns out to be a critical quantity governing the performance of PCA. In fact, there exists a critical threshold $\theta_{\text{BBP}}\equiv\theta_\BBP(\gamma)= 1/\gamma^{1/4}$ \citep{baikbenarouspeche,benaych2011eigenvalues} known as the BBP threshold, such that
\begin{align}\label{eqn:bbp}
    |\langle \hat v_n^\PCA,v_0\rangle|\to \begin{cases}
        0, & \theta < \theta_{\text{BBP}}, \\
        \sqrt{1 - \dfrac{1+\theta^2}{\theta^2(\gamma\theta^2+1)}}, & \theta > \theta_{\text{BBP}}
    \end{cases}
\end{align}In other words, to get positive asymptotic overlap, it is necessary that $\theta>\theta_{\text{BBP}}$. Extensive research over the last couple decades \citep{johnstone2001distribution,baikbenarouspeche,benaych2011eigenvalues,benaych2012singular,johnstone2018pca,perry2018optimality} has made our understanding of PCA fairly comprehensive. It is known that the BBP threshold is information theoretically optimal \citep{perry2018optimality} when $v_0$ is generic, and thus no procedure exists that can outperform PCA in the context of detection. When $v_0$ is more structured, e.g. is sparse, other algorithms have been developed to outperform PCA \citep{johnstone2009consistency,amini2008high,deshpande2016sparse,deshpande2014information}. More recently, approximate message passing algorithms have been used to outperform PCA once the signal exceeds the BBP threshold \citep{MontanariVenkataraman,LiFanWei}.

\paragraph{Online PCA.} Perhaps the simplest and most popular algorithm for online / streaming PCA is due to Oja \citep{oja1982simplified,oja1985stochastic}; also see \citet{krasulina1970method} for a similar method. Oja's algorithm has been presented in (\ref{eqn:oja_main_recursion}). A line of research has provided theoretical guarantees on online PCA algorithms; see \citet{warmuth2008randomized,boutsidis2014online,jain2016streaming,nie2016online,henriksen2019adaoja} for a non-exhaustive list. We refer the interested reader to \citet{cardot2018online} for a nice survey on the different algorithms for online PCA. Specifically related to Oja's algorithm, inspiring recent works involve \cite{lunde2021bootstrapping,kumar2023streaming,kumar2025beyond,pham2025time} among others. However, almost all these studies provide rates and upper bounds on the estimation error. In high dimensional problems, it is known that standard concentration-based results are usually unable to capture the precise statistical performance of estimators as exact constants become critical. When $n$ and $d$ are both large and neither can be neglected, the precise trajectory of Oja's algorithm therefore remains unknown. 

Having said this, there do exist results describing the precise performance of Oja's algorithm in high dimensions, particularly due to \citet{wang2016online,wang2017scaling}. However, they assume that the initialization is informative, i.e. the algorithm is initialized with $\hat v_0$ satisfying $\lim_{d\to\infty}|\langle \hat v_0,v_0\rangle|>0$. They show that given this ``warm" start, the algorithm can reach stable equilibrium with $n=\Theta(d)$ samples. Of course, it is not at all clear how one can start with an informative initialization. In contrast, our results provide the complete description of Oja's algorithm starting with random initialization. Moreover, as our Theorem \ref{thm:critical} establishes, for the entire $O(d)$ window at criticality, the transition of the limiting overlap from $0$ to $\rho_*$ is actually through a random path defined by the Gaussian variable $G$. This is in contrast to what one expects from informative initialization. 

Finally, we note that \citet{li2017diffusion} discusses a three-phase behavior of Oja's algorithm but in a low-dimensional setting. This low dimensionality has an important impact on the resulting formulae and do not correspond to genuine high dimensional Oja.

\paragraph{High dimensional SGD.} Oja's algorithm can be viewed as stochastic gradient descent (SGD) on a suitable loss function; see Section \ref{sec:BAGJ} for a discussion. A recent line of work has studied high dimensional SGD algorithms \citep{arous2021online,ben2022high,gheissari2025universality}. In particular, \citet{arous2021online} has studied as a special case a variant of Oja's algorithm (details to be found in Section \ref{sec:BAGJ}) and derived the $d\log d$ rate for that algorithm (not for Oja) as related to the information exponent of the problem. However, they provide order estimates, whereas we are able to completely track down the relevant constants and precise behavior. Moreover, we show in Theorem \ref{thm:BAGJ} that both Oja and their algorithm have the exact same phase transition behavior, with exactly the same constants and asymptotic distributions. \citet{ben2022high} provide differential equations describing summary statistics of high dimensional SGD updates. Their setup is different from ours, since Oja involves an additional normalization step after the loss function update, which has a non-trivial effect on the final dynamics.

\paragraph{Notations.} For a vector $x$, $\|x\|$ will typically denote the Euclidean norm, unless otherwise specified. $\sS^{d-1}:=\{x\in\sR^d:\|x\|=1\}$ denotes the $d-$dimensional sphere. $\gN(\mu,\Sigma)$ denotes the Gaussian distribution with mean $\mu$ and covariance matrix $\Sigma$. $\chi^2_k$ denotes the chi-squared random variable with $k$ degrees of freedom. Unless otherwise stated, $\log(\cdot)$ will always represent natural logarithm. For a set $S$, $Unif(S)$ will denote the uniform distribution over $S$. 
A sequence $X_n$ of random variables converges in probability to a random variable $X$, denoted by $X_n\stackrel{p}{\to}X$ if for any $\epsilon>0$, $\sP(|X_n-X|>\epsilon)\to 0$ as $n\to\infty$. $X_n$ converges weakly to $X$, denotes by $X_n\stackrel{w}{\to}X$ if for every $x$ in the continuity set of the cdf $F_X$ of $X$, $P(X_n\leq x)\to P(X\leq x)$ as $n\to\infty$. See any standard text on probability theory for more details, for example \citet{billingsley2017probability}.
Finally, $C$ will denote unspecified but universal constants, that will change from place to place. We will not track these constants explicitly.

\section{Main Results}\label{sec:main_results}

As described in Section \ref{sec:intro}, we set ourselves in the Johnstone spiked model \citep{johnstone2001distribution} and assume that the true covariance matrix $\Sigma$ has the form
\begin{align*}
    \Sigma = \theta^2 v_0v_0^\top + I
\end{align*}Here, $\theta>0$ and $v_0\in\sS^{d-1}$ are both unknown. The goal is to estimate $v_0$, the top population principal component, using iid data $X_k\sim\gN(0,\Sigma)$ arriving in a streaming / online fashion, when $d$ is large.

\subsection{Main Recursion}

Recall Oja's iterative algorithm from (\ref{eqn:oja_main_recursion}). Starting with $\hat v_0\in\sS^{d-1}$ and a step size $\delta>0$, Oja's algorithm iteratively produces updates
\begin{align}
\begin{split}
    \tilde v_k &= \tilde v_{k-1} + (\delta/d)\langle X_k,\tilde v_{k-1}\rangle X_k,\\
    \hat v_k &= \tilde v_k/\|\tilde v_k\|    
\end{split}
\end{align}Note that the step size is actually $\delta/d$, although in whatever follows, we might abuse notation and call $\delta$ the step size. Observe that the step size is constant across all iterations, although vanishingly small as $d$ is large. We adopt this ``constant" step size following prior high dimensional analyses of online algorithms \citep{wang2016online,wang2017scaling,arous2021online,ben2022high}.

\medskip

Taking inner product with $v_0$, and setting $\rho_k:=\langle v_k,v_0\rangle$ as the (signed) overlap at iteration $k$, we get
\begin{align}\label{eq:recursion_rho}
    \rho_k &= \dfrac{\rho_{k-1}+(\delta/d)\langle X_k,v\rangle \langle X_k,\hat v_{k-1}\rangle}{\|\hat v_{k-1}+(\delta/d)\langle X_k,\hat v_{k-1}\rangle X_k\|}
\end{align}The denominator can be further simplified. Towards this, define
\begin{align*}
    A_k = \langle X_k,v_0\rangle, \quad B_k =\langle X_k,\hat v_{k-1}\rangle
\end{align*}Then, a simple calculation yields
\begin{align}\label{eqn:recursion_ABC}
    \rho_k &= \dfrac{\rho_{k-1} + (\delta/d)A_kB_k}{[1 + (2\delta/d)B_k^2 + (\delta/d)^2B_k^2\|X_k\|^2]^{1/2}}
\end{align}
It would be helpful to introduce some more random variables. Since $\langle \hat v_{k-1},v_0\rangle=\rho_{k-1}$, and both $\hat v_{k-1},v_0$ are unit vectors, we may write
\begin{align*}
    \hat v_{k-1} &= \rho_{k-1}v_0 + \sqrt{1-\rho^2_{k-1}}e_{k-1}
\end{align*}where $e_{k-1}\in\sS^{d-1}$ is orthogonal to $v_0$: $\langle e_{k-1}, v_0\rangle = 0$. Then, define
\begin{align*}
    C_k = \langle X_k,e_{k-1}\rangle
\end{align*}Then, we get the following representation:
\begin{align*}
    B_k &= \rho_{k-1}A_k + \sqrt{1-\rho^2_{k-1}}C_k
\end{align*}
 \medskip
Since $\{v_0,e_{k-1}\}$ is a set of orthonormal vectors, we can decompose $X_k$ as
\begin{align*}
    X_k &= \langle X_k,v_0\rangle v_0 + \langle X_k, e_{k-1}\rangle e_{k-1} + D_k
\end{align*}where $D_k$ is the projection of $X_k$ orthogonal to $\text{Span}(\{v_0,e_{k-1}\})$. That is, letting $P_{k-1}\equiv P_{\{v_0,e_{k-1}\}}=v_0v_0^\top + e_{k-1}e_{k-1}^\top$ denote the projection matrix onto $\text{Span}(\{v_0,e_{k-1}\})$, and denoting by $P_{k-1}^\perp=I-P_{k-1}$ the projection matrix onto the ortho-complement of $\text{Span}(\{v_0,e_{k-1}\})$, we have $D_k=P_{k-1}^\perp X_k$. We then have the following lemma.

\begin{lemma}\label{lemma:Xk_components_indep}
The random variables $A_k, C_k, \|D_k\|^2$ are mutually independent, and also independent of $\gF_{k-1}$. Further,
\begin{align*}
    A_k\sim \gN(0,\theta^2+1), \quad C_k\sim \gN(0,1),\quad \|D_k\|^2\sim \chi^2_{d-2}
\end{align*}
\end{lemma}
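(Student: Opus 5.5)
Throughout, $\gF_{k-1}$ denotes the $\sigma$-algebra generated by the initialization $\hat v_0$ and the samples $X_1,\dots,X_{k-1}$. The plan is to condition on $\gF_{k-1}$ and use rotational invariance of the Gaussian. Given $\gF_{k-1}$, the vectors $\hat v_{k-1}$ and $\rho_{k-1}$ are fixed, and hence so is $e_{k-1}$. If $|\rho_{k-1}|=1$, $e_{k-1}$ is not determined by $\hat v_{k-1}$; in that case I would pick it as any $\gF_{k-1}$-measurable unit vector orthogonal to $v_0$, which does not affect $B_k$. Since $X_k$ is independent of $\gF_{k-1}$, conditionally on $\gF_{k-1}$ we still have $X_k\sim\gN(0,\Sigma)$.

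Write $X_k=\Sigma^{1/2}Z_k$ with $Z_k\sim\gN(0,I)$ independent of $\gF_{k-1}$. Then $\Sigma^{1/2}=I+(\sqrt{1+\theta^2}-1)v_0v_0^\top$. Complete $\{v_0,e_{k-1}\}$ to an $\gF_{k-1}$-measurable orthonormal basis $\{v_0,e_{k-1},u_3,\dots,u_d\}$. In this basis $\Sigma$ is diagonal with entries $(1+\theta^2,1,1,\dots,1)$, because $e_{k-1}$ and the $u_j$ are orthogonal to $v_0$. The coordinates are therefore:
\begin{align*}
A_k&=\langle X_k,v_0\rangle=\sqrt{1+\theta^2}\,\langle Z_k,v_0\rangle,\\
C_k&=\langle X_k,e_{k-1}\rangle=\langle Z_k,e_{k-1}\rangle,\\
\langle X_k,u_j\rangle&=\langle Z_k,u_j\rangle,\quad j=3,\dots,d.
\end{align*}
Moreover $\|D_k\|^2=\sum_{j\ge3}\langle Z_k,u_j\rangle^2$.

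Conditionally on $\gF_{k-1}$, the coordinates of $Z_k$ in any fixed orthonormal basis are iid $\gN(0,1)$. So $A_k\sim\gN(0,\theta^2+1)$, $C_k\sim\gN(0,1)$ and $\|D_k\|^2\sim\chi^2_{d-2}$, and the three are conditionally mutually independent, being functions of disjoint sets of independent coordinates. This conditional joint law does not depend on $\gF_{k-1}$. Hence for bounded measurable $f,g,h$ and any bounded $\gF_{k-1}$-measurable $W$,
\[
\E\bigl[f(A_k)\,g(C_k)\,h(\|D_k\|^2)\,W\bigr]=\E f\cdot\E g\cdot\E h\cdot \E W,
\]
where the three expectations on the right are taken under the stated laws. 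This identity gives both the mutual independence and the independence from $\gF_{k-1}$.

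The only real subtlety is the measurability step. The basis $u_3,\dots,u_d$ must be chosen measurably in $\gF_{k-1}$, for example by Gram–Schmidt applied to a fixed basis, with a measurable tie-break. One can also avoid choosing a basis altogether: write $\|D_k\|^2=Z_k^\top P_{k-1}^\perp Z_k$ and use that $P_{k-1}^\perp Z_k$ is Gaussian and uncorrelated with, hence independent of, $(\langle Z_k,v_0\rangle,\langle Z_k,e_{k-1}\rangle)$, conditionally on $\gF_{k-1}$.
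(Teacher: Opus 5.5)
Your proof is correct and follows essentially the same route as the paper. Both condition on $\gF_{k-1}$, use that $X_k$ is an independent $\gN(0,\Sigma)$ vector, and show that the conditional joint law of $(A_k,C_k,\|D_k\|^2)$ is a fixed product law; the paper gets there via conditional covariances, ``uncorrelated jointly Gaussian implies independent'' and $Z^\top P Z\sim\chi^2_{\mathrm{rank}(P)}$ (your closing basis-free remark), while you diagonalize in an adapted orthonormal basis, and your identity $\E[f(A_k)g(C_k)h(\|D_k\|^2)W]=\E f\,\E g\,\E h\,\E W$ states the joint independence from $\gF_{k-1}$ more explicitly than the paper's final sentence does.
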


 \medskip

Lemma \ref{lemma:Xk_components_indep} will be remarkably helpful in computing expectations of relevant quantities downstream. Now, we present our first important lemma that takes advantage of the high dimensionality i.e. that $d$ is large, and simplifies recursion (\ref{eqn:recursion_ABC}).

\begin{lemma}\label{lemma:main_recursion}
Define $\alpha=\delta(\theta^2-\delta/2)$ and $\beta=\delta\theta^2(1+\delta/2)$. Then, recursion (\ref{eqn:recursion_ABC}) can be re-written as
    \begin{align}\label{eqn:recursion_final}
      \rho_k &= (1 + \alpha/d)\rho_{k-1} - \beta\rho^3_{k-1}/d + M_k/d + R_k/d^2
    \end{align}
    Here, $M_k$ is a martingale difference sequence adapted to the filtration $\gF_k$, so that $\E[M_k|\gF_{k-1}]=0$ for all $k$. Further, $M_k$ and $R_k$ have bounded moments of all orders, that is, for any $j\geq 1$, there is a constant $C_j>0$ such that
    \begin{align*}
        \sup_{k\geq 1}(\E|M_k|^j + \E|R_k|^j) &\leq C_j
    \end{align*}Consequently, setting $c_d\equiv 1 + \alpha/d$, for any $m< n$, we get\begin{align}\label{eqn:recursion_rho_unfolded_final}
        \rho_n &= c_d^{n-m}\rho_m - \dfrac{\beta}{d}\sum_{k=m+1}^nc_d^{n-k}\rho^3_{k-1} + \dfrac{1}{d}\sum_{k=m+1}^n c_d^{n-k}M_k + \dfrac{1}{d^2}\sum_{k=m+1}^n c_d^{n-k}R_k
    \end{align}
\end{lemma}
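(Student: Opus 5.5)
Let $\epsilon:=\delta/d$ and write the denominator of (\ref{eqn:recursion_ABC}) as $(1+u_k)^{1/2}$ with $u_k:=2\epsilon B_k^2+\epsilon^2B_k^2\|X_k\|^2$. Since $\|X_k\|^2=A_k^2+C_k^2+\|D_k\|^2$ and $\|D_k\|^2\sim\chi^2_{d-2}$, we have $\epsilon^2\|X_k\|^2=\delta^2/d+O(d^{-3/2})$ in every $L^j$. So $u_k=(2\delta+\delta^2)B_k^2/d+O(d^{-3/2})$, with all error terms carrying moments controlled through Lemma \ref{lemma:Xk_components_indep}. The plan is to Taylor expand $(1+u)^{-1/2}=1-u/2+r(u)$ with $|r(u)|\le Cu^2$ for $u\ge0$; this holds globally on $u\ge 0$, so no truncation is needed. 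Multiplying by the numerator $\rho_{k-1}+\epsilon A_kB_k$ then gives
\begin{align*}
\rho_k=\rho_{k-1}+\frac{\delta}{d}A_kB_k-\frac{\delta(1+\delta/2)}{d}B_k^2\rho_{k-1}+\text{(remainder)}.
\end{align*}

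Next we compute conditional expectations given $\gF_{k-1}$, using $B_k=\rho_{k-1}A_k+\sqrt{1-\rho_{k-1}^2}\,C_k$ together with Lemma \ref{lemma:Xk_components_indep}. This gives $\E[A_kB_k|\gF_{k-1}]=(\theta^2+1)\rho_{k-1}$ and $\E[B_k^2|\gF_{k-1}]=\rho_{k-1}^2(\theta^2+1)+1-\rho_{k-1}^2=1+\theta^2\rho_{k-1}^2$. The conditional drift is therefore
\begin{align*}
\frac{\delta(\theta^2+1)-\delta(1+\delta/2)}{d}\,\rho_{k-1}-\frac{\delta\theta^2(1+\delta/2)}{d}\,\rho_{k-1}^3=\frac{\alpha}{d}\rho_{k-1}-\frac{\beta}{d}\rho_{k-1}^3,
\end{align*}
which matches the stated $\alpha$ and $\beta$. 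We then define
\begin{align*}
M_k:=\delta\bigl(A_kB_k-\E[A_kB_k|\gF_{k-1}]\bigr)-\delta(1+\delta/2)\rho_{k-1}\bigl(B_k^2-\E[B_k^2|\gF_{k-1}]\bigr).
\end{align*}
This is a martingale difference by construction. Since $|\rho_{k-1}|\le1$ and $A_k,C_k$ are Gaussians independent of $\gF_{k-1}$, its moments of all orders are bounded uniformly in $k$.

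Everything else is placed in $R_k/d^2$. The remainder collects three kinds of terms:
\begin{itemize}
\item the $\epsilon^2B_k^2(\|X_k\|^2-d)$ fluctuation, which is $O(d^{-3/2})$;
\item the cross term $\epsilon A_kB_k\cdot u_k/2$, which is $O(d^{-2})$;
\item the term $r(u_k)(\rho_{k-1}+\epsilon A_kB_k)$, which is $O(d^{-2})$.
\end{itemize}
The first item is the main obstacle. Writing $\epsilon^2B_k^2\|D_k\|^2=\delta^2B_k^2(d-2)/d^2+\delta^2B_k^2(\|D_k\|^2-(d-2))/d^2$, its deviation part is only $O(d^{-3/2})$ in size, so it cannot go into $R_k$ with bounded moments. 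However, it has conditional mean zero because $\|D_k\|^2$ is independent of $B_k$ and $\gF_{k-1}$. So we will absorb $-\tfrac12\delta^2\rho_{k-1}B_k^2(\|D_k\|^2-(d-2))/d^2$ into $M_k/d$. Its contribution to $M_k$ is $O(d^{-1/2})$ in $L^j$, which preserves both the martingale property and the bounded moments. The $-2$ correction and the $A_k^2,C_k^2$ pieces contribute $O(1/d^2)$ and go into $R_k$.

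Bounded moments of $R_k$ follow from Hölder's inequality, the bounds $|r(u)|\le Cu^2$ and $|\rho|\le1$, and moment bounds for Gaussian and $\chi^2$ variables, using $\E[(\|D_k\|^2/d)^j]\le C_j$. Finally, the unfolded identity (\ref{eqn:recursion_rho_unfolded_final}) follows by writing $\rho_k=c_d\rho_{k-1}+f_k$ with $f_k=-\beta\rho_{k-1}^3/d+M_k/d+R_k/d^2$ and iterating from $m$ to $n$, i.e.\ the discrete variation-of-constants formula, proved by induction on $n$.
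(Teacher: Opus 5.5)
Your proposal is correct and follows essentially the same route as the paper: a second-order Taylor expansion of $(1+x)^{-1/2}$, computing the conditional drift via Lemma \ref{lemma:Xk_components_indep} to identify $\alpha$ and $\beta$, absorbing the centered $\|D_k\|^2$ fluctuation (only $O(d^{-3/2})$ in size) into $M_k/d$ rather than $R_k/d^2$, and iterating $\rho_k=c_d\rho_{k-1}+f_k$. Your centering of $\|D_k\|^2$ at its exact mean $d-2$ is slightly cleaner than the paper's. The paper centers $B_k^2\|D_k\|^2/d$ at $\theta^2\rho_{k-1}^2+1$ rather than at its true conditional mean $(\theta^2\rho_{k-1}^2+1)(d-2)/d$, which leaves an $O(1/d)$ conditional bias in $M_k$ that has to be moved into $R_k$.
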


\medskip

Recursions (\ref{eqn:recursion_final}) and \ref{eqn:recursion_rho_unfolded_final} will play an important role in the proofs of our main theorems. Note that a priori there is no reason why $\alpha$ should be positive, since $\delta$ is the step size chosen in Oja's algorithm and $\theta$ is the true signal value in $\Sigma$ -- two completely unrelated quantities. However, the sign of $\alpha$ plays a major role in the success of Oja's algorithm. This is documented in our first theorem in the next section.

\subsection{Main Theorems}

Our first theorem establishes that if $\alpha<0$, then Oja's algorithm is asymptotically powerless no matter how large $n$ grows with $d$.

\begin{theorem}\label{thm:alpha0}
    Suppose $\alpha<0$, i.e. $\delta > 2\theta^2$. Let $\hat v_n$ be the output of Oja's algorithm after $n$ steps starting with a random initialization $\hat v_0\sim Unif(\sS^{d-1})$. Then, for any $n,d\to\infty$,
    \begin{align*}
        \langle \hat v_n,v_0\rangle \stackrel{p}{\to}0
    \end{align*}
\end{theorem}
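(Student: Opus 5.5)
We will work directly with the recursion (\ref{eqn:recursion_final}) of Lemma \ref{lemma:main_recursion} and show that the second moment $\E[\rho_n^2]$ is $O(1/d)$ uniformly in $n$. Chebyshev's inequality then gives $\rho_n\stackrel{p}{\to}0$ for any joint growth of $n,d$. The guiding heuristic is that when $\alpha<0$ both drift terms, $(\alpha/d)\rho$ and $-(\beta/d)\rho^3$, pull toward zero. Meanwhile the noise $M_k/d$ injects only $O(1/d^2)$ variance per step, against a contraction of $O(1/d)$ per step, so the stationary level of $\rho^2$ is $O(1/d)$. At initialization, $\hat v_0\sim Unif(\sS^{d-1})$ gives $\E\rho_0^2=1/d$.

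\textbf{Step 1: one-step second moment bound.} Square (\ref{eqn:recursion_final}) and take conditional expectation given $\gF_{k-1}$. Write $a=(1+\alpha/d)\rho_{k-1}-\beta\rho_{k-1}^3/d$.

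Since $\E[M_k\mid\gF_{k-1}]=0$, the cross term $2aM_k/d$ vanishes. Because $\beta>0$ and $|\rho|\le 1$, for $d$ large we have $|a|\le(1+\alpha/d)|\rho_{k-1}|$, and hence $a^2\le c_d^2\rho_{k-1}^2$. The remaining terms are:
\begin{itemize}
\item $\E[M_k^2]/d^2\le C/d^2$, by the moment bound in Lemma \ref{lemma:main_recursion};
\item $2\E|aR_k|/d^2\le C\,\E|\rho_{k-1}R_k|/d^2$;
\item terms of order $d^{-3}$ and $d^{-4}$, namely $\E[M_kR_k]/d^3$ and $\E[R_k^2]/d^4$.
\end{itemize}

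\textbf{Main obstacle: the term $\rho_{k-1}R_k/d^2$.} Bounding it crudely by $C/d^2$ costs nothing in the final rate, but it is cleaner to control it by Cauchy--Schwarz. The delicate point is whether Lemma \ref{lemma:main_recursion} gives unconditional or conditional moment bounds on $R_k$. Only unconditional bounds are needed here: $\E|\rho_{k-1}R_k|\le (\E\rho_{k-1}^2)^{1/2}C$, or simply $\le C$. Either way we obtain
\begin{align*}
\E\rho_k^2\le c_d^2\,\E\rho_{k-1}^2 + C/d^2 .
\end{align*}

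\textbf{Step 2: unroll.} Set $q=c_d^2=1-2|\alpha|/d+O(d^{-2})<1-|\alpha|/d$ for $d$ large. Iterating the bound from Step 1,
\begin{align*}
\E\rho_n^2\le q^n\,\E\rho_0^2+\frac{C}{d^2}\sum_{j\ge0}q^j\le \frac1d+\frac{C}{d^2}\cdot\frac{d}{|\alpha|}=O(1/d),
\end{align*}
uniformly in $n$. Chebyshev's inequality then gives $\sP(|\rho_n|>\epsilon)\le C/(d\epsilon^2)\to0$.

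The argument never uses any relation between $n$ and $d$, which matches the claim in Theorem \ref{thm:alpha0} that the conclusion holds for arbitrary $n,d\to\infty$.
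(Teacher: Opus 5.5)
Your proof is correct. It uses the same basic mechanism as the paper: a moment bound extracted from the recursion of Lemma~\ref{lemma:main_recursion}, followed by a Markov-type inequality. The estimate is organized differently, however, and the difference matters.

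The paper deduces the theorem from Lemma~\ref{lemma:uniform_bounds_rho}, $\E|\rho_n|^3\le Ce^{3\alpha n/d}d^{-3/2}$, which it obtains from second and fourth moments and Lyapunov's inequality. The proof of that lemma bounds the accumulated noise as $\frac{C}{d^2}\sum_{l<k}(1+2\alpha/d)^l\le \frac{C}{2\alpha d}e^{2\alpha k/d}$. This step is valid only for $\alpha>0$. When $\alpha<0$ the sum is of order $d/|\alpha|$ and does not decay in $k$.

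In fact no such decaying bound can hold for $\alpha<0$. The martingale term injects variance of roughly $\delta^2(\theta^2+1)/d^2$ per step, while the drift contracts only by a factor $1-O(1/d)$. So $\E\rho_k^2$ stays of order $1/d$ for $k\gg d$, and a bound like $e^{2\alpha k/d}/d$ fails there.

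Your argument handles exactly this point. Using $a^2\le c_d^2\rho_{k-1}^2$ (which relies on $\beta>0$) and $c_d^2<1-|\alpha|/d$, you sum a convergent geometric series. This gives $\E\rho_n^2\le 1/d+C/(|\alpha| d)$ uniformly in $n$, which is the correct order and suffices via Chebyshev.

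So your route is simpler, since it needs no fourth-moment computation, and it is the one actually valid for $\alpha<0$. The uniform $O(d^{-3/2})$ bound the paper finally uses is true, but proving it needs a contraction argument of your type, not Lemma~\ref{lemma:uniform_bounds_rho} as proved.
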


Theorem \ref{thm:alpha0} highlights the importance of choosing a small enough step size $\delta$. We note that a similar success/failure threshold for unnormalized SGD depending on the step size was also identified in \citet{ben2022high}. Henceforth, we will assume $\alpha > 0$ i.e. $\delta < 2\theta^2$. Define 
\begin{align}\label{eqn:rho_gamma_crit_defns}
\begin{split}
   \gamma_*&\equiv \gamma_*(\theta,\delta)=\dfrac{1}{2\alpha}=\dfrac{1}{\delta(2\theta^2-\delta)} \\
   \rho_* &\equiv \rho_*(\theta,\delta) = \sqrt{\dfrac{\alpha}{\beta}} = \sqrt{\dfrac{\theta^2-\delta/2}{\theta^2(1+\delta/2)}}    
\end{split}
\end{align}
 Then, the following Theorem \ref{thm:subcritical} and Theorem \ref{thm:supercritical} together establish $\gamma_*$ as the phase transition threshold in Oja's algorithm.

\begin{theorem}\label{thm:subcritical}(Subcritical phase)
    Suppose $n/d\log d\to \gamma$ as $d\to\infty$. Then,
    \begin{align*}
        \gamma<\gamma_*\implies \rho_n\stackrel{p}{\to}0
    \end{align*}
\end{theorem}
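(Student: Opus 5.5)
We work throughout with the unfolded recursion (\ref{eqn:recursion_rho_unfolded_final}) from Lemma \ref{lemma:main_recursion}, taken with $m=0$:
\begin{align*}
\rho_n = c_d^{n}\rho_0 - \dfrac{\beta}{d}\sum_{k=1}^n c_d^{n-k}\rho_{k-1}^3 + \dfrac{1}{d}\sum_{k=1}^n c_d^{n-k}M_k + \dfrac{1}{d^2}\sum_{k=1}^n c_d^{n-k}R_k .
\end{align*}
The idea is that when $n\approx\gamma d\log d$ with $\gamma<\gamma_*=1/2\alpha$, the growth factor satisfies $c_d^n\approx \exp(\alpha n/d)=d^{\alpha\gamma+o(1)}$. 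Since $\alpha\gamma<1/2$, this is $d^{1/2-\epsilon}$ for some $\epsilon>0$. The initial overlap $\rho_0$ is $O_p(d^{-1/2})$ under uniform initialization, and the martingale noise turns out to contribute at the same $d^{-1/2}$ scale per unit of exponential growth. Every linear term is therefore of order $d^{-\epsilon}$. The cubic term has a favourable sign, but we will not rely on that; we only need it to be small, which we obtain by a bootstrap.

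We bound the linear terms one at a time.
\begin{itemize}
\item \textbf{Initial term.} $\E\rho_0^2=1/d$, so $|c_d^n\rho_0| = O_p(d^{\alpha\gamma'-1/2})$ for any $\gamma'\in(\gamma,\gamma_*)$ and large $d$.
\item \textbf{Martingale term.} The summands $c_d^{n-k}M_k$ are martingale differences, and Lemma \ref{lemma:main_recursion} gives $\sup_k\E M_k^2\le C_2$. Hence
\begin{align*}
\E\Big(\frac1d\sum_{k=1}^n c_d^{n-k}M_k\Big)^2\le \frac{C_2}{d^2}\sum_{j=0}^{n-1}c_d^{2j}\le \frac{C}{d^2}\cdot\frac{c_d^{2n}}{c_d^2-1}\le \frac{C}{\alpha d}\,c_d^{2n} = O\big(d^{2\alpha\gamma'-1}\big)\to0 .
\end{align*}
\item \textbf{Remainder term.} Using $\E|R_k|\le C_1$, we get $\frac{1}{d^2}\sum_k c_d^{n-k}\E|R_k|\le \frac{C}{d}\,c_d^n/\alpha = O(d^{\alpha\gamma'-1})\to0$.
\end{itemize}

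The cubic term is the main obstacle, because it involves $\rho_{k-1}$ itself. We handle it with a stopping-time argument. Define $\tau=\inf\{k:|\rho_k|>d^{-\epsilon/2}\}$ and apply the unfolded recursion to the stopped process, i.e.\ sum only up to $n\wedge\tau$, where the bounds above still hold by optional stopping and Doob's maximal inequality. For $k<\tau$ we have $|\rho_{k-1}|^3\le d^{-3\epsilon/2}$, so
\begin{align*}
\frac{\beta}{d}\sum_{k\le n\wedge\tau}c_d^{n-k}|\rho_{k-1}|^3\le \frac{\beta}{\alpha}\,c_d^{n}\,d^{-3\epsilon/2}=O\big(d^{1/2-\epsilon-3\epsilon/2}\big).
\end{align*}
This is not automatically small unless $\epsilon$ is large, so the crude bound is not enough and we refine it by iterating. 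On $\{k<\tau\}$ the linear bounds, applied at each intermediate time $k$ with $c_d^{k}$ in place of $c_d^n$, give $|\rho_k|\lesssim c_d^{k}d^{-1/2}$ up to polylog factors. Substituting this into the cubic sum yields
\begin{align*}
\frac1d\sum_k c_d^{n-k}c_d^{3k}d^{-3/2}\lesssim c_d^{3n}d^{-3/2}/\alpha ,
\end{align*}
which is $o(c_d^n d^{-1/2})$ because $c_d^{2n}d^{-1}\to0$.

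To make the substitution rigorous, we need the linear bounds to hold uniformly in $k\le n$. We get this from a maximal inequality for the martingale
\begin{align*}
\Big(\sum_{k\le j}c_d^{-k}M_k\Big)_j .
\end{align*}
Doob's $L^2$ inequality gives $\max_{j\le n}\big|\frac1d\sum_{k\le j}c_d^{j-k}M_k\big|\le c_d^{n}\cdot O_p(d^{-1/2})$ after factoring out $c_d^j\le c_d^n$. A uniform bound of the form $c_d^{j}\cdot O_p(d^{-1/2})$ at each $j$ is then obtained from the same inequality. With these bounds, a standard continuity/induction argument shows $\tau>n$ with probability tending to one, and on that event all four terms are $o_p(1)$. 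Hence $\rho_n\stackrel{p}{\to}0$.
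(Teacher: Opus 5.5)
\textbf{Verdict: your proof is correct, but it controls the cubic term differently from the paper.}

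\textbf{The paper's route.} The paper does no bootstrapping. It first proves the a priori moment bound of Lemma \ref{lemma:uniform_bounds_rho}, $\E|\rho_k|^3\le Ce^{3\alpha k/d}/d^{3/2}$, from the second- and fourth-moment recursions. In those recursions the cubic drift is simply discarded by its sign, $\rho^2(\alpha-\beta\rho^2)\le\alpha\rho^2$. Inserting this bound into the unfolded recursion bounds the cubic sum in $L^1$ by $Ce^{3\alpha n/d}/d^{3/2}\to0$. Every term then becomes a one-line expectation estimate at the single time $n$. For the noise, the paper splits $M_k=\tilde M_k+S_k$ and invokes Lemma \ref{lemma:martingale_simplified}. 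That step is not needed for this theorem; your direct orthogonality bound from $\sup_k\E M_k^2\le C_2$ is simpler.

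\textbf{Your route, stated cleanly.} Your argument is pathwise. Set $N_j=\sum_{k\le j}c_d^{-k}M_k$ and
$K:=\sqrt d|\rho_0|+d^{-1/2}\max_{j\le n}|N_j|+d^{-3/2}\sum_{k\le n}c_d^{-k}|R_k|$.
\begin{itemize}
\item $K$ is tight: use Doob's inequality for the middle term and Markov's inequality for the others.
\item $K$ bounds the linear part of the unfolded recursion by $Kc_d^jd^{-1/2}$ simultaneously for all $j\le n$.
\item Induction on $j$ then gives $|\rho_j|\le 2Kc_d^jd^{-1/2}$ for all $j\le n$ on the event $\{4\beta K^2c_d^{2n}\le\alpha d\}$. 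This event has probability tending to one because $c_d^{2n}/d\to0$.
\end{itemize}
I would write the argument in exactly this form. The stopping time $\tau$, the crude $d^{-\epsilon/2}$ bound, and the ``polylog factors'' are all superfluous. No continuity argument is involved either: the cubic sum at time $j$ only involves $\rho_0,\dots,\rho_{j-1}$, and the induction hypothesis already controls those.

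\textbf{What each approach buys.} Your argument never uses the sign of $\beta$, so it works for any drift of the form $\alpha\rho+O(\rho^3)$. It also yields the stronger, trajectory-uniform statement $\sup_{j\le n}\sqrt d\,c_d^{-j}|\rho_j|=O_p(1)$. The paper's moment lemma, by contrast, is proved once and then reused in Theorem \ref{thm:alpha0}, Lemma \ref{lemma:martingale_simplified} and Lemma \ref{lemma:exact_behavior_rho_subcritical}, which keeps each of those proofs short.
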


\begin{theorem}\label{thm:supercritical}(Supercritical phase)
    Suppose $n/d\log d\to \gamma$ as $d\to\infty$. Then,
    \begin{align*}
        \gamma>\gamma_*\implies |\rho_n|\stackrel{p}{\to}\rho_*
    \end{align*}
\end{theorem}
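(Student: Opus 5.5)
The proof is a two-phase analysis of recursion (\ref{eqn:recursion_final}). In the first phase $|\rho_k|$ grows geometrically from its initial $O(d^{-1/2})$ size to a small constant $\epsilon$; in the second, deterministic phase it relaxes to $\rho_*$. Since $\hat v_0\sim Unif(\sS^{d-1})$, we have $\sqrt d\,\rho_0\stackrel{w}{\to}\gN(0,1)$, and in particular $\sP(|\rho_0|<\kappa/\sqrt d)\to 0$ as $\kappa\to 0$ uniformly in $d$.

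\emph{Growth phase.} Take $m=0$ in (\ref{eqn:recursion_rho_unfolded_final}), which gives
\begin{align*}
c_d^{-n}\rho_n=\rho_0-\frac{\beta}{d}\sum_{k\le n}c_d^{-k}\rho_{k-1}^3+\frac1d\sum_{k\le n}c_d^{-k}M_k+\frac1{d^2}\sum_{k\le n}c_d^{-k}R_k .
\end{align*}
Bound each term on the right.
\begin{itemize}
\item The martingale term has variance at most $C d^{-2}\sum_k c_d^{-2k}\le C d^{-2}\cdot d/(2\alpha)=O(1/d)$, using the uniform moment bound on $M_k$ from Lemma \ref{lemma:main_recursion}. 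It is therefore $O_p(d^{-1/2})$, and in fact $\sqrt d$ times it converges to a Gaussian independent of $\rho_0$ in the limit. This source of randomness is what produces $G$ at criticality.
\item The $R_k$ term is $O_p(1/d)$.
\item For the cubic term, introduce the stopping time $\tau=\inf\{k:|\rho_k|\ge\epsilon\}$ and argue up to $n_1=n\wedge\tau$. Before $\tau$, a bootstrap bound $|\rho_{k}|\le C c_d^{k}d^{-1/2}$ gives $\frac{\beta}{d}\sum_k c_d^{-k}|\rho_{k-1}|^3\le C d^{-3/2}\frac1d\sum_k c_d^{2k}$. This is $\lesssim d^{-1/2}\cdot(c_d^{2n_1}/d)$, which is $\lesssim\epsilon^2 d^{-1/2}$.
\end{itemize}
Altogether $c_d^{-k}\rho_k=d^{-1/2}(Z+o_p(1)+O(\epsilon^2))$ uniformly for $k\le n_1$, where $Z$ is a.s. nonzero (use Doob's maximal inequality for uniformity). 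Hence $|\rho_k|\approx |Z|\,d^{-1/2}e^{\alpha k/d}$. With $n=\gamma d\log d$ this magnitude is $d^{\alpha\gamma-1/2}$.
\begin{itemize}
\item If $\gamma<\gamma_*=1/(2\alpha)$, then $\tau>n$ with high probability and $\rho_n\to 0$. This is Theorem \ref{thm:subcritical}, whose proof this estimate essentially reproduces.
\item If $\gamma>\gamma_*$, then $\tau\le \gamma_* d\log d+O_p(d)$, which leaves at least $(\gamma-\gamma_*)d\log d\gg d$ steps remaining.
\end{itemize}

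\emph{Deterministic relaxation phase.} Starting from $\tau$ with $|\rho_\tau|\ge\epsilon$, compare $\rho_{\tau+\lfloor td\rfloor}$ with the ODE $\dot r=\alpha r-\beta r^3$, $r(0)=\rho_\tau$, over a fixed time horizon $T$. Use (\ref{eqn:recursion_rho_unfolded_final}) with $m=\tau$ (strong Markov property, since the increments are fresh by Lemma \ref{lemma:Xk_components_indep}). Over $Td$ steps the martingale term is $O_p(d^{-1/2})$ by Doob, the remainder is $O(1/d)$, and the drift is Lipschitz on $[-1,1]$. A discrete Gronwall argument then shows the iterate tracks the ODE uniformly on $[0,T]$. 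Since $\epsilon>0$, the ODE reaches within $\eta$ of $\pm\rho_*$ by a time $T(\epsilon,\eta)$ independent of $d$.

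\emph{Staying near $\rho_*$ for the remaining steps — the main obstacle.} After $\rho$ is within $\eta$ of $\rho_*$, up to $\gamma d\log d$ further steps remain, which is far more than a fixed horizon of order $d$. The plan is to work with $u_k=\rho_k^2-\rho_*^2$ (or $\rho_k-\rho_*$ on the correct sign). Linearizing the drift gives the contraction $u_k\approx(1-2\alpha/d)u_{k-1}+M'_k/d+O(u^2/d+d^{-2})$, since $\alpha-3\beta\rho_*^2=-2\alpha$. Unfolding this recursion as in (\ref{eqn:recursion_rho_unfolded_final}) with contraction factor $(1-2\alpha/d)^{n-k}$ gives martingale variance $O(1/d)$ at any fixed time. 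To control the whole window, and to prevent escape to where the linearization fails, split the remaining time into blocks of length $Td$. Within each block, the ODE comparison above, together with a union bound using higher moments of $M_k$ (which supply polynomially small tail probabilities beating the $\log d$ many blocks), shows that the process re-enters the $\eta$-neighbourhood and never leaves a $2\eta$-neighbourhood. The sign of $\rho$ cannot flip, since that would require crossing $0$, where the drift repels. Conclude $|\rho_n|\stackrel{p}{\to}\rho_*$ by letting $\eta\to0$ and $\epsilon\to0$. The delicate points are the uniform-in-blocks union bound and choosing $\epsilon$ small enough that the cubic term is negligible in the first phase while still leaving an order-one ODE start.
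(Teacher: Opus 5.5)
Your proposal is correct in outline, but it takes a different route from the paper in both the escape and the stabilization phases.

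\textbf{Escape phase.} The paper does not stop at a fixed level $\epsilon$.
\begin{itemize}
\item Lemma \ref{lemma:exact_behavior_rho_subcritical} shows that at $n_-=[\gamma_*d\log d-t_dd]$, with $t_d\to\infty$ slowly, $e^{\alpha t_d}\rho_{n_-}\stackrel{d}{\to}\gN(0,\rho_*^{-2})$.
\item The cubic term is killed by the unconditional bound $\E|\rho_k|^3\le Ce^{3\alpha k/d}d^{-3/2}$ of Lemma \ref{lemma:uniform_bounds_rho}, so no bootstrap or stopping time is needed.
\item The Gaussian limit comes from replacing $M_k$ by the i.i.d.\ $\tilde M_k=\delta A_kC_k$ (Lemma \ref{lemma:martingale_simplified}). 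This is exactly the step you assert without proof, and you need it to know your $Z$ is anti-concentrated at $0$.
\item Lemma \ref{lemma:ode_tracking} then compares with the ODE over a growing window of $2t_dd$ steps, at cost $\sqrt{t_d/d}\,e^{Ct_d}$ (hence $t_d=\log\log d$). This gives $|\rho_{n_+}|\stackrel{p}{\to}\rho_*$ at $n_+=[\gamma_*d\log d+t_dd]$ (Lemma \ref{lemma:critical_upper_boundary}).
\end{itemize}
Your fixed-$\epsilon$, fixed-horizon comparison followed by $\epsilon\to0$ trades this balancing of $t_d$ for a bootstrap on the cubic term; both work. One small correction: your uniform statement with a single $Z$ holds only for $k\ge Ld$ with $L$ large, since for $k=O(d)$ the martingale part has not yet converged. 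This is harmless.

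\textbf{Stabilization phase.} The real divergence is the step you call the main obstacle, which the paper handles in a few lines. Set $e_k=\rho_k^2-\rho_*^2$ and note $e(\alpha-\beta\rho^2)=-\beta e^2$. Expanding the recursion gives, for every $\rho\in[-1,1]$,
\begin{equation*}
\E[e_k^2]\le\E[e_{k-1}^2]-4\beta\E[\rho_{k-1}^2e_{k-1}^2]/d+C/d^2 ,
\end{equation*}
since the cross term with $M_k$ has mean zero and $|e_k|\le 1$. Hence $\E[e_n^2]\le\E[e_{n_+}^2]+Cn/d^2=o(1)$, because $n=O(d\log d)$. The key point is that the total noise injected over the remaining $O(d\log d)$ steps is only $O(\log d/d)$ in second moment. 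So the global dissipativity of $(\rho^2-\rho_*^2)^2$ is enough, with no linearization, stopping times, or union bounds.

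Your block argument is also valid. Doob's $L^2$ inequality already makes each block fail with probability $O(1/d)$, which beats the $O(\log d)$ blocks, so higher moments are unnecessary. It also gives you more than the paper gets: pathwise confinement near $\pm\rho_*$ and preservation of sign over the whole window. The paper's bound, by contrast, controls $\rho_n^2$ only at the fixed time $n$.
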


Thus, Theorem \ref{thm:supercritical} establishes that increasing the sample size $n$ does not yield further benefit. Even when $\gamma\to\infty$, the algorithm plateaus at $\rho_*$. This is also evident in the numerical experiments we present in Section \ref{sec:experiments}.

\medskip

Finally, we present the result at criticality i.e. when $n/d\log d\to \gamma_*$. In this case, the asymptotic overlap is random and non-degenerate over a $\Theta(d)$ window.
\begin{theorem}\label{thm:critical}(Critical phase)
    Let $n=[\gamma_*d\log d+\eta d]$, $\eta\in\sR$. Then, as $d\to\infty$,
    \begin{align*}
        |\rho_n| \stackrel{w}{\to} \dfrac{\rho_*\exp(\alpha\eta)|G|}{\sqrt{\rho_*^4 + \exp(2\alpha\eta) G^2}}
    \end{align*} where $G\sim\gN(0,1)$.
\end{theorem}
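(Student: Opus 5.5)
We split the $n=[\gamma_*d\log d+\eta d]$ steps into two phases at $m=[\gamma_* d\log d - K d]$ for a large fixed $K$ (later $K\to\infty$). The overall approach is to show that Phase 1 is a linear Gaussian growth phase and Phase 2 is a deterministic logistic ODE phase.

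\textbf{Phase 1 (linear growth from the noise floor).} Initially $\sqrt d\,\rho_0\stackrel{w}{\to}\gN(0,1)$ since $\hat v_0\sim Unif(\sS^{d-1})$. We use the unfolded recursion (\ref{eqn:recursion_rho_unfolded_final}) of Lemma \ref{lemma:main_recursion} with $m=0$, multiplied by $\sqrt d\, c_d^{-m}$:
\begin{align*}
\sqrt d\, c_d^{-m}\rho_m = \sqrt d\rho_0 - \frac{\beta}{\sqrt d}\sum_{k\le m} c_d^{-k}\rho_{k-1}^3 + \frac{1}{\sqrt d}\sum_{k\le m}c_d^{-k}M_k + \frac{1}{d^{3/2}}\sum_{k\le m} c_d^{-k}R_k .
\end{align*}
\begin{itemize}
\item The martingale term has variance $\frac1d\sum_k c_d^{-2k}\E M_k^2\le C\sum_k c_d^{-2k}/d\le C/(2\alpha)$. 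This is $O(1)$, but it does not vanish.
\item Since $c_d^{-k}\approx e^{-\alpha k/d}$, the early $O(d)$ steps dominate, and in those steps $\E[M_k^2\mid\gF_{k-1}]$ is approximately its value at $\rho=0$, namely some $\sigma^2$.
\item The martingale CLT therefore gives $\sqrt d\, c_d^{-m}\rho_m\stackrel{w}{\to}\gN(0,1+\sigma^2/2\alpha)$, say $\tau G$.
\item The remainder term is $O(d^{-1/2})$ in $L^1$.
\item The cubic term is handled by a bootstrapped a priori bound $|\rho_k|\lesssim c_d^k d^{-1/2}$ with high probability. This follows from a stopping-time/Doob maximal argument on the same martingale, since the cubic drift has the sign that shrinks $|\rho|$. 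Given that bound, the cubic sum is $\lesssim d^{-2}\sum_k c_d^{2k}\lesssim c_d^{2m}/d\approx d^{2\alpha\gamma_*-1}e^{-2\alpha K}=e^{-2\alpha K}$, which is small for large $K$.
\item Finally $c_d^m\approx \sqrt d\, e^{-\alpha K}$ because $2\alpha\gamma_*=1$. Hence $\rho_m\approx \tau G e^{-\alpha K}$, which is small but of order one.
\end{itemize}
The exact value of $\tau$ must come out so that the final constants match the statement. I expect $\tau$ to combine with $\rho_*$ to give $\rho_*^2$ in the denominator of the limit. Checking this is where the precise computation of $\E[M_k^2\mid\gF_{k-1}]$ from (\ref{eqn:recursion_ABC}) and Lemma \ref{lemma:Xk_components_indep} enters.

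\textbf{Phase 2 (deterministic ODE from small order-one overlap).} Over the remaining $(K+\eta)d$ steps we rescale time $t=(k-m)/d$. Recursion (\ref{eqn:recursion_final}) is a stochastic approximation with drift $\alpha\rho-\beta\rho^3$ and noise $M_k/d$, whose accumulated contribution over $O(d)$ steps is $O(d^{-1/2})$ by Doob's inequality. A discrete Grönwall argument, with the Lipschitz drift on $[-1,1]$, gives $\sup_{k}|\rho_k-r((k-m)/d)|\to0$ in probability, where $r$ solves $\dot r=\alpha r-\beta r^3$ with $r(0)=\rho_m$. This Bernoulli equation has the explicit solution
\[ r(t)^2=\frac{\rho_m^2 e^{2\alpha t}}{1+\rho_m^2 e^{2\alpha t}/\rho_*^2}. \]
At $t=K+\eta$ we have $\rho_m^2e^{2\alpha t}\to \tau^2G^2e^{2\alpha\eta}$, independent of $K$. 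Letting $K\to\infty$ then yields
\[ |\rho_n|\stackrel{w}{\to}\frac{\rho_*\tau e^{\alpha\eta}|G|}{\sqrt{\rho_*^2+\tau^2e^{2\alpha\eta}G^2}}, \]
and matching this with the stated form amounts to verifying that $\tau$ is determined by $\rho_*$ as claimed (with the appropriate normalization). A diagonal argument in $K$, using continuity of the map from $\rho_m$ to $r(K+\eta)$, finishes the proof.

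\textbf{Main obstacle.} The hardest step is Phase 1. We need the a priori bound controlling $\rho_k$ uniformly along the $\Theta(d\log d)$ steps, and we need the martingale CLT with the correct limiting variance, since $\E[M_k^2\mid\gF_{k-1}]$ depends on $\rho_{k-1}$. Both rest on the weighting $c_d^{-k}$ localizing everything to the first $O(d)$ steps. I would first establish the high-probability bound $\sup_{k\le m}c_d^{-k}|\rho_k|\le A/\sqrt d$ via a stopping time $\tau_A$ and Doob's $L^2$ inequality, then use it to justify freezing the conditional variance at $\rho=0$.
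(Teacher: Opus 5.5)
Your plan has the same architecture as the paper's proof. First there is a linear phase, in which $\rho_k$ is $c_d^k$ times an asymptotically Gaussian quantity, run until $\gamma_*d\log d-O(d)$. Then come $O(d)$ steps of tracking the logistic ODE $\dot r=\alpha r-\beta r^3$, and a continuous-mapping evaluation of its explicit solution. The differences are technical.

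\emph{Split point.} The paper splits at $n_-=[\gamma_*d\log d-t_dd]$ with $t_d\to\infty$ slowly (e.g.\ $t_d=\log\log d$). After scaling by $e^{\alpha t_d}$ the cubic term then vanishes outright and no second limit is needed. The price is an ODE-tracking error $C\sqrt{t_d/d}\,e^{Ct_d}$ that the slow growth of $t_d$ must beat. Your fixed $K$ makes Phase 2 a plain Gr\"onwall argument. The $K\to\infty$ step is fine, because $e^{\alpha K}\rho_m\mapsto r(K+\eta)$ is Lipschitz with constant $e^{\alpha\eta}$, uniformly in large $K$.

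\emph{A priori control.} The obstacle you flag is handled in the paper without stopping times. The moment recursion of Lemma~\ref{lemma:uniform_bounds_rho} gives $\E|\rho_k|^3\le Ce^{3\alpha k/d}d^{-3/2}$, which bounds your scaled cubic term in $L^1$ by $Ce^{-2\alpha K}$ directly.

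\emph{CLT.} Instead of a martingale CLT with frozen conditional variance, the paper replaces $M_k$ by $\tilde M_k=\delta A_kC_k$. The difference satisfies $\E S_k^2\le C\E\rho_{k-1}^2$ (Lemma~\ref{lemma:martingale_simplified}), and the Lyapunov CLT is then applied to an iid sum.

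\emph{ODE slip.} Your explicit solution drops a term: the denominator is $1+\rho_m^2(e^{2\alpha t}-1)/\rho_*^2$. The missing $-\rho_m^2/\rho_*^2=O_p(e^{-2\alpha K})$ disappears as $K\to\infty$, so nothing changes.

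The genuine gap is that the proposal never proves the stated formula. You leave $\tau$ undetermined and only ``expect'' it to match, yet the precise constants are the content of the theorem. Two things are needed.

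First, $\sqrt d\,c_d^{-m}\rho_m$ is the sum of the initialization term and the noise term. Writing its limit as $\gN(0,1+\sigma^2/2\alpha)$ requires these two limits to be independent. A martingale CLT for the noise sum alone gives only its marginal limit; you would need a stable or $\gF_0$-conditional version. After the paper's replacement this is automatic: by Lemma~\ref{lemma:Xk_components_indep}, the pairs $(A_k,C_k)$ are iid and independent of $\rho_0$.

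Second, every term of (\ref{eqn:martingale_def}) except $\delta\sqrt{1-\rho_{k-1}^2}A_kC_k$ carries a factor $\rho_{k-1}$. Hence $\sigma^2=\E(\delta A_kC_k)^2=\delta^2(\theta^2+1)$, and
\begin{align*}
\tau^2=1+\frac{\delta^2(\theta^2+1)}{2\alpha}=\frac{2\theta^2-\delta+\delta(\theta^2+1)}{2\theta^2-\delta}=\frac{\theta^2(2+\delta)}{2\theta^2-\delta}=\frac{\beta}{\alpha}=\frac{1}{\rho_*^2}.
\end{align*}
With $\tau=1/\rho_*$, your limit $\rho_*\tau e^{\alpha\eta}|G|/\sqrt{\rho_*^2+\tau^2e^{2\alpha\eta}G^2}$ becomes exactly $\rho_*e^{\alpha\eta}|G|/\sqrt{\rho_*^4+e^{2\alpha\eta}G^2}$. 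This is the identity $v\rho_*=1$ that closes the paper's proof. With these two points supplied, your argument is complete.
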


Note that at criticality, the overlap interpolates between $0$ and $\rho_*$, from $\eta\to-\infty$ to $\eta\to\infty$. For any $\eta\in\sR$, the asymptotic distribution of $\rho_n$ is non-zero. Treating this as an informative initialization for subsequent iterations, $\Theta(d)$ iterations suffice for Oja's algorithm to reach the equilibrium $\rho_*$. Further, $\rho_*$ will always smaller than $1$, although can limit to $1$ when $\delta\to 0$. Thus, we recover the conclusions in \citet{wang2016online}.

\medskip

The reader might find the emergence of the gaussian variable $G$ curious. As it is evident in the proof of Theorem \ref{thm:critical} in Section \ref{sec:proofs}, $G$ has contribution both from the random initialization $\hat\rho_0$ and a random walk term while the algorithm rises from the $O(d^{-1/2})$ floor to achieve a nonzero overlap. The first time when the algorithm escapes this so-called search phase (using the terminology from \citet{arous2021online}) is random and unknown. Thus, the sign of $G$ is not predictable from merely the sign of the initial overlap.

\medskip

Together, Theorems \ref{thm:subcritical}, \ref{thm:supercritical} and \ref{thm:critical} completely characterize the phase transition in Oja's algorithm.

\subsection{Properties of $\gamma_*$ and $\rho_*$}

In this subsection, we investigate the behavior of $\gamma_*$ and $\rho_*$ as functions of $\theta,\delta$. First, we study $\gamma_*$. It is expected that as the signal strength $\theta$ increases, Oja's algorithm would spend less time searching for nonzero overlap, and hence $\gamma_*$ would decrease. The following lemma documents this.

\medskip

\begin{lemma}\label{lemma:gamma_star_behavior}
    For any $\theta>0$, $\delta\mapsto \gamma_*(\theta,\delta)$ has a unique minimum at $\delta_*=\theta^2$. Further, both functions $\theta\mapsto \gamma_*(\theta,\delta)$ for any fixed $\delta$ and $\theta\mapsto \gamma_*(\theta,\delta_*)$ are decreasing.
\end{lemma}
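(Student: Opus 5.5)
The statement is elementary calculus on the explicit formula $\gamma_*(\theta,\delta)=1/\bigl(\delta(2\theta^2-\delta)\bigr)$ from (\ref{eqn:rho_gamma_crit_defns}). Throughout we work on the admissible range $0<\delta<2\theta^2$, where $\alpha>0$ and $\gamma_*$ is finite and positive. Outside this range $(2\theta^2-\delta)_+=0$ and $\gamma_*=+\infty$, so no minimizer can lie there.

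For the first claim, fix $\theta>0$ and note that minimizing $\gamma_*$ over $\delta\in(0,2\theta^2)$ is equivalent to maximizing the positive function $f(\delta)=\delta(2\theta^2-\delta)=2\theta^2\delta-\delta^2$. This is a strictly concave quadratic with derivative $f'(\delta)=2\theta^2-2\delta$, which vanishes only at $\delta=\theta^2$. Since $f$ is positive on the interval and vanishes at both endpoints, $\delta_*=\theta^2$ is its unique maximizer. Hence $\delta_*$ is the unique minimizer of $\gamma_*(\theta,\cdot)$, with minimum value $\gamma_*(\theta,\theta^2)=1/\theta^4$. Completing the square gives this directly: $f(\delta)=\theta^4-(\delta-\theta^2)^2$.

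For monotonicity in $\theta$ at fixed $\delta$, consider $\theta$ with $2\theta^2>\delta$. On this range $\theta\mapsto\delta(2\theta^2-\delta)$ is positive and strictly increasing, since its derivative is $4\delta\theta>0$. Its reciprocal $\gamma_*$ is therefore strictly decreasing. For $\theta\le\sqrt{\delta/2}$ we have $\gamma_*=+\infty$, so $\gamma_*$ is non-increasing on all of $(0,\infty)$, and strictly decreasing where it is finite. For the tuned step size, $\gamma_*(\theta,\delta_*)=1/\theta^4$, which is strictly decreasing in $\theta>0$.

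There is no real obstacle here. The only point needing care is the boundary convention: one must state that $\gamma_*=\infty$ when $\delta\ge 2\theta^2$, consistent with Theorem~\ref{thm:alpha0} and with the $(\cdot)_+$ in the abstract, so that the minimizer and the monotonicity statements are read on the admissible region.
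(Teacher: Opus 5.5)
Your proof is correct and follows the paper's argument: maximize the downward parabola $\delta(2\theta^2-\delta)$ at $\delta_*=\theta^2$, use that it is increasing in $\theta$ so its reciprocal decreases, and note $\gamma_*(\theta,\delta_*)=1/\theta^4$. You also restrict explicitly to the admissible region $0<\delta<2\theta^2$, where the parabola is positive, which the paper leaves implicit. That positivity is needed both for taking reciprocals to reverse monotonicity and for $\delta_*$ to be a genuine minimizer, so this is a small but worthwhile tightening.
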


\begin{proof}[Proof of Lemma \ref{lemma:gamma_star_behavior}]
    Recall from (\ref{eqn:rho_gamma_crit_defns}) that $\gamma_*(\theta,\delta)=1/\delta(2\theta^2-\delta)$. The inverse function $\delta\mapsto \delta(2\theta^2-\delta)$ is a parabola opening downwards and hence has a unique maximum at $\delta_*=\theta^2$. Consequently, $\delta\mapsto\gamma_*(\theta,\delta)$ has a unique minimum at $\delta_*$.

    Next, for any fixed $\delta$, the function $\theta\mapsto \delta(2\theta^2-\delta)$ is increasing, and so $\theta\mapsto \gamma_*(\theta,\delta)$ is decreasing. Finally, $\gamma_*(\theta,\delta_*)=1/\theta^4$. Clearly, $\theta\mapsto \gamma_*(\theta,\delta_*)$ is also decreasing.
\end{proof}

\medskip

The next lemma documents the behavior of $\rho_*(\theta,\delta)$. Once again, we expect that as $\theta$ increases, $\rho_*$ would increase. However, what it also shows is that if we choose $\delta=\delta_*$ to optimize the search time, then the resulting correlation suffers, and actually gets worse as the signal strength $\theta$ increases.

\medskip

\begin{lemma}\label{lemma:rho_star_behavior}
    The function $\delta\mapsto\rho_*(\theta,\delta)$ is decreasing, for any $\theta$. It takes values $0$ at $\delta=2\theta^2$ and $1$ at $\delta=0$. Further, the function $\theta\mapsto \rho_*(\theta,\delta)$ is increasing for any $\delta$, but the function $\theta\mapsto \rho_*(\theta,\delta_*)$ is decreasing.
\end{lemma}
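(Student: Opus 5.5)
The plan is to work with the square $\rho_*^2$ rather than with $\rho_*$ itself. Since $t\mapsto\sqrt{t}$ is increasing on $[0,\infty)$, every monotonicity claim for $\rho_*$ is equivalent to the same claim for $\rho_*^2$. From (\ref{eqn:rho_gamma_crit_defns}), on the admissible range $0\le\delta\le 2\theta^2$ (that is, $\alpha\ge 0$),
\begin{align*}
    \rho_*^2(\theta,\delta)=\dfrac{\theta^2-\delta/2}{\theta^2(1+\delta/2)}=\dfrac{1-\delta/(2\theta^2)}{1+\delta/2}.
\end{align*}
Beyond $2\theta^2$ one interprets $\rho_*$ through the positive part $(\cdot)_+$ used in the abstract, so $\rho_*\equiv 0$ there. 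The whole lemma then reduces to elementary monotonicity of a few explicit rational functions, and I will treat each claim separately.

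\emph{Dependence on $\delta$.} Fix $\theta$. The numerator $1-\delta/(2\theta^2)$ is nonnegative and strictly decreasing in $\delta$ on $[0,2\theta^2]$, and the denominator $1+\delta/2$ is positive and strictly increasing. Hence the quotient is strictly decreasing on this interval, and it stays constant at $0$ afterwards if one extends by the positive part. The endpoint values come from direct substitution:
\begin{itemize}
    \item at $\delta=0$ we get $\rho_*^2=1/1=1$;
    \item at $\delta=2\theta^2$ the numerator vanishes, so $\rho_*=0$.
\end{itemize}

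\emph{Dependence on $\theta$ at fixed $\delta$.} Using the second form of $\rho_*^2$, the denominator $1+\delta/2$ does not involve $\theta$. The numerator $1-\delta/(2\theta^2)$ is increasing in $\theta>0$, so $\rho_*^2$ is increasing in $\theta$. This holds on the region $\theta^2\ge\delta/2$, where $\alpha\ge 0$; below that threshold $\rho_*$ is identically $0$, so it is nondecreasing overall.

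\emph{Dependence on $\theta$ at $\delta=\delta_*$.} Substituting $\delta_*=\theta^2$ from Lemma \ref{lemma:gamma_star_behavior} gives
\begin{align*}
    \rho_*^2(\theta,\theta^2)=\dfrac{\theta^2/2}{\theta^2(1+\theta^2/2)}=\dfrac{1}{2+\theta^2},
\end{align*}
which is strictly decreasing in $\theta$. Note that $\delta_*=\theta^2<2\theta^2$, so this choice always lies in the regime $\alpha>0$ and the positive part plays no role.

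There is no real obstacle here. The only point needing care is stating the domain precisely: the formula for $\rho_*$ is meaningful for $\delta\le 2\theta^2$, and the monotonicity is strict inside that range and weak (constant at $0$) if one extends via the positive part.
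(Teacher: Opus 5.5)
Your proposal is correct and follows the paper's proof essentially step for step. The paper likewise gets monotonicity in $\delta$ from a decreasing numerator over an increasing denominator, rewrites $\rho_*$ as $\sqrt{(1-\delta/2\theta^2)/(1+\delta/2)}$ to read off monotonicity in $\theta$, and substitutes $\delta_*=\theta^2$ to obtain $\sqrt{1/(2+\theta^2)}$; your extra care about the numerator's sign and the positive-part extension beyond $\delta=2\theta^2$ is a harmless refinement the paper leaves implicit.
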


\begin{proof}[Proof of Lemma \ref{lemma:rho_star_behavior}]
    Recall the expression of $\rho_*$ from (\ref{eqn:rho_gamma_crit_defns}). It is clearly decreasing in $\delta$ since the numerator is decreasing and the denominator is increasing in $\delta$. Also, $\rho_*(\theta,0)=1$ and $\rho_*(\theta,2\theta^2)=0$ are checked readily.

    Next, we can re-write $\rho_*$ as
    \begin{align*}
        \rho_*(\theta,\delta) &= \sqrt{\dfrac{1 - \delta/2\theta^2}{1+\delta/2}}
    \end{align*}which immediately makes it evident that $\theta\mapsto\rho_*(\theta,\delta)$ is increasing. Finally, setting $\delta_*=\theta^2$, we get
    \begin{align*}
        \rho_*(\theta,\delta_*) &= \sqrt{\dfrac{1}{2+\theta^2}}
    \end{align*}which immediately implies $\theta\mapsto\rho_*(\theta,\delta_*)$ is decreasing.
\end{proof}

Figures \ref{fig:rho_star_plot} and \ref{fig:rho_star_at_delta_star_plot} demonstrate the behavior of $\rho_*(\theta,\delta)$.

\begin{figure}
    \centering
    \includegraphics[width=0.8\linewidth]{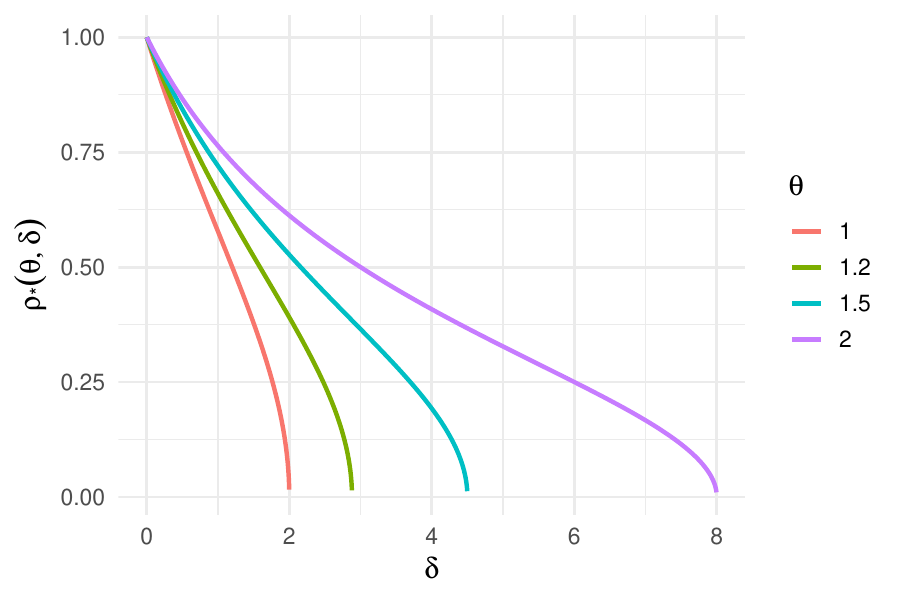}
    \caption{Plot of $\rho_*(\theta,\delta)$ as a function of $\delta$ for different values of $\theta$. When $\delta>2\theta^2$, i.e. $\alpha<0$, $\rho_*(\theta,\delta)=0$. Larger $\theta$'s show better correlation at any $\delta$.}
    \label{fig:rho_star_plot}
\end{figure}

\begin{figure}
    \centering
    \includegraphics[width=0.8\linewidth]{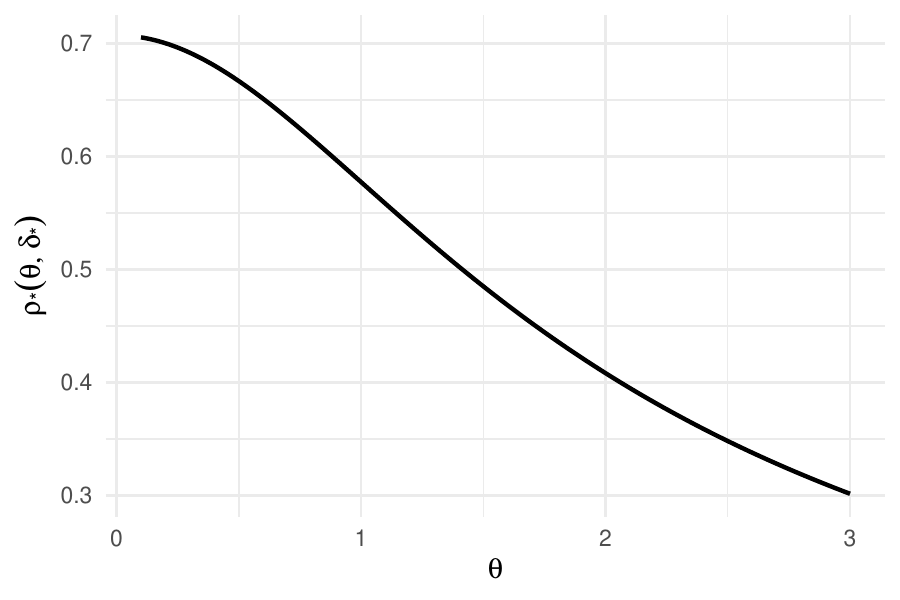}
    \caption{Plot of $\rho_*(\theta,\delta_*)$ as a function of $\theta$, where $\delta_*=\theta^2$ is the optimal $\delta$ minimizing $\gamma_*(\theta,\delta)$. The curve is decreasing in $\theta$.}
    \label{fig:rho_star_at_delta_star_plot}
\end{figure}

\subsection{Comparison with offline PCA and a curious connection}

Recall the discussion on PCA and BBP phase transition in Section \ref{sec:related_works}. For our purposes, we will write the BBP phase transition a little differently. Let
\begin{align}\label{eqn:gamma_BBP_def}
  \gamma_\BBP\equiv\gamma_\BBP(\theta)=1/\theta^4  
\end{align}
 Then, with $n/d\to\gamma\in(0,\infty)$ -- note the ratio is $n/d$ and not $n/d\log d$ -- the performance of PCA can be re-expressed as
\begin{align*}
    |\langle \hat v_n^\PCA,v_0\rangle|\to \begin{cases}
        0, & \gamma < \gamma_{\text{BBP}}, \\
        \sqrt{1 - \dfrac{1+\theta^2}{\theta^2(\gamma\theta^2+1)}}, & \gamma > \gamma_{\text{BBP}}
    \end{cases}
\end{align*}Thus, for offline PCA, as long as $\lim n/d > \gamma_*$, nonzero overlap is possible. Put another way, $n$ can be linear in $d$ and still we achieve nonzero overlap. In contrast, for Oja, Theorems \ref{thm:subcritical} and \ref{thm:supercritical} show that this is not the case, and $n$ must be of the order $d\log d$ for any nonzero overlap.

 \medskip

Of course, Oja's algorithm is online, while standard PCA is offline. Thus they are a priori unrelated. However, surprisingly, we do identify an interesting connection between the two, unraveled through the optimal $\delta_*$ derived in the previous subsection. We have seen in Lemma \ref{lemma:gamma_star_behavior} that $\delta_*=\theta^2$ minimizes $\delta\mapsto \gamma_*(\theta,\delta)$ and in that case, implies that
\begin{align}\label{eqn:gamma_star_def_optimal}
   \gamma_*(\theta,\delta_*)=1/\theta^4 
\end{align}
Note that this is exactly identical to the BBP phase transition threshold $\gamma_\BBP$ we defined in (\ref{eqn:gamma_BBP_def}). That is, formula (\ref{eqn:gamma_BBP_def}) and (\ref{eqn:gamma_star_def_optimal}) together conclude
\begin{align*}
    \gamma_\BBP = \gamma_*(\theta,\delta_*) = 1/\theta^4
\end{align*}It is indeed curious that Oja's algorithm with specific $\delta_*$ chosen to minimize the time taken to reach a nonzero overlap, and offline PCA have the exact same phase transition location for their two kinds of \textit{inverse aspect ratios}: $n/d\log d$ for Oja and $n/d$ for PCA!
\section{Experiments}\label{sec:experiments}

We present experimental evidence validating our main theorems in Section \ref{sec:main_results}. We vary $d\in\{500, 800, 1000, 2000, 5000, 10000\}$, $\theta\in\{1,2\}$ and $\delta\in\{0.1, 0.2, 0.6, 1\}$. Note that $\delta<2\theta^2$ is always satisfied. 

\medskip

For each such $(d,\theta,\delta)$ triple, we perform the following experiment $100$ times. We take the truth $v_0$ to be a random uniformly distributed vector on $\sS^{d-1}$ and generate data $X_n\sim \gN(0,\Sigma)$ with $\Sigma=\theta^2v_0v_0^\top+I$ sequentially for $1\leq n\leq n_{\max}\equiv [3\gamma_*d\log d]$, and run Oja's algorithm. We take $n_{\max}=[3\gamma_*d\log d]$ in order to completely cover the phase transition region. For each run, we record the overlap $\langle \hat v_n,v_0\rangle$ for every iteration $n$. Thus, for each triple $(d,\theta,\delta)$, we therefore have $100$ Monte Carlo trajectories of overlaps.

\medskip

Figures \ref{fig:theta_1_delta_0.1_0.2}, \ref{fig:theta_1_delta_0.6_1}, \ref{fig:theta_2_delta_0.1_0.2} and \ref{fig:theta_2_delta_0.6_1} depict the performance of Oja's algorithm. For each $(\theta,\delta, d)$ tuple, we compute the median curve over $100$ Monte Carlo overlap trajectories and plot it along with 1st and 3rd quartile bands. The x-axis shows iteration count divided by $n_*$, so that the critical phenomenon would be around $1$, which is what the red vertical line is. In all the cases, we see that the algorithm transitions from zero to non-zero overlap in a window around $n_*$. As $d$ increases, the curves get closer to the $x=1$ line, demonstrating the phase transition phenomenon. Also, we see that the curves all saturate at the theoretical value $\rho_*$, and do not improve further.

\medskip

There is one point that we would like to highlight about the figures. As $\delta$ gets large, we can see that right after $x=1$, the trajectory seems to reach overlap $\rho_*$. One might wonder why this happens. The formula makes it clear. Recall from Theorem \ref{thm:critical}, the limiting overlap is given by $$\dfrac{\rho_*|G|\exp(\eta/2\gamma_*)}{\sqrt{\rho_*^4 + G^2\exp(\eta/\gamma_*)}}$$
Fix $\eta=0$ so that we are looking exactly at the middle of the phase transition window. As $\delta$ grows, we have already seen from Lemma \ref{lemma:rho_star_behavior} that $\rho_*$ decreases. Consequently, $\rho_*^4$ may be quite small, much smaller than the gaussian variable $|G|$. This makes
\begin{align*}
    \dfrac{\rho_*|G|}{\sqrt{\rho_*^4 + G^2}}\approx \rho_*
\end{align*}Indeed, consider the subfigure in Figure \ref{fig:theta_1_delta_0.6_1} corresponding to $\theta=1,\delta=1$. Here, $\rho_*=0.58$, thus $\rho_*^4=0.11$. Standard Gaussian $G$ would often be much larger than $\rho_*^4$. This explains why we see that the algorithm apparently reaches $\rho_*$ quite rapidly after the $x=1$ line.

\medskip

Let $S=[-1,1]^2\setminus \{(x,y):|x|=|y|\}$. Define the transformation $T:S\to \sR$ by 
\begin{align*}
    T(\hat\rho,\rho)\equiv \dfrac{\hat\rho\rho^2}{\sqrt{\rho^2-\hat\rho^2}}
\end{align*}Then, 
Theorem \ref{thm:critical} implies that $|T(\hat\rho_{n_*},\rho_*)|\stackrel{w}{\to}|G|$
for $G\sim\gN(0,1)$. Actually, as is made evident through the proof of Theorem \ref{thm:critical}, it turns out that this limit holds without the absolute value as well: $T(\hat\rho_{n_*},\rho_*)\stackrel{w}{\to}G$. Thus, to verify that the limit distribution identified in Theorem \ref{thm:critical} is correct, we do the following. For each $(\theta,\delta)$ pair, we take the $100$ (signed) overlap values corresponding to the 100 Monte Carlo trajectories at $n=n_*$. Then, we apply the transformation $T$ to them and do a standard normal QQ plot. Figure \ref{fig:qq_theta_both} shows that the agreement is very good, thereby corroborating that the asymptotic distribution we found is correct.

\begin{figure}[p]
    \centering
    \vfill
    \begin{subfigure}{0.8\textwidth}
        \centering
        \includegraphics[width=\linewidth]{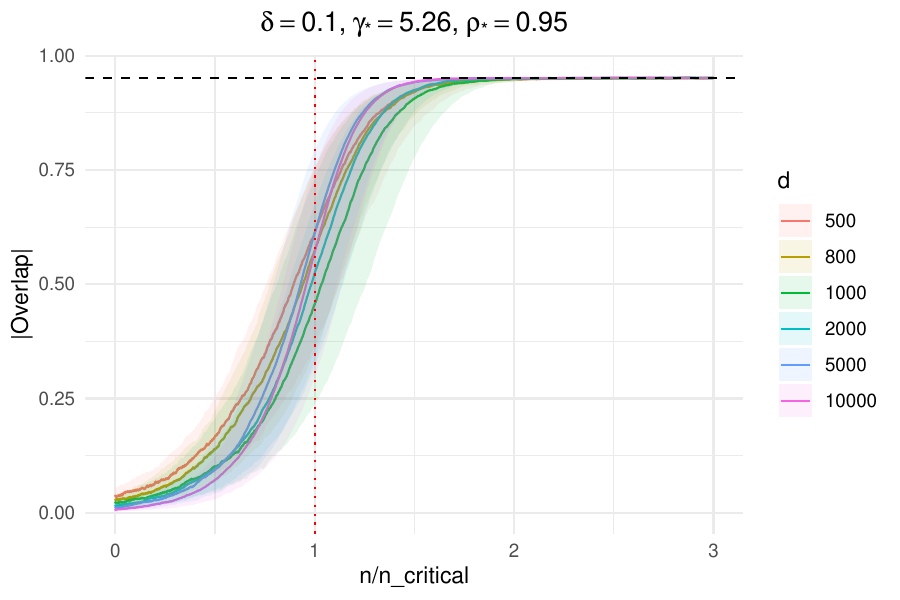}
    \end{subfigure}

    \vspace{2em}

    \begin{subfigure}{0.8\textwidth}
        \centering
        \includegraphics[width=\linewidth]{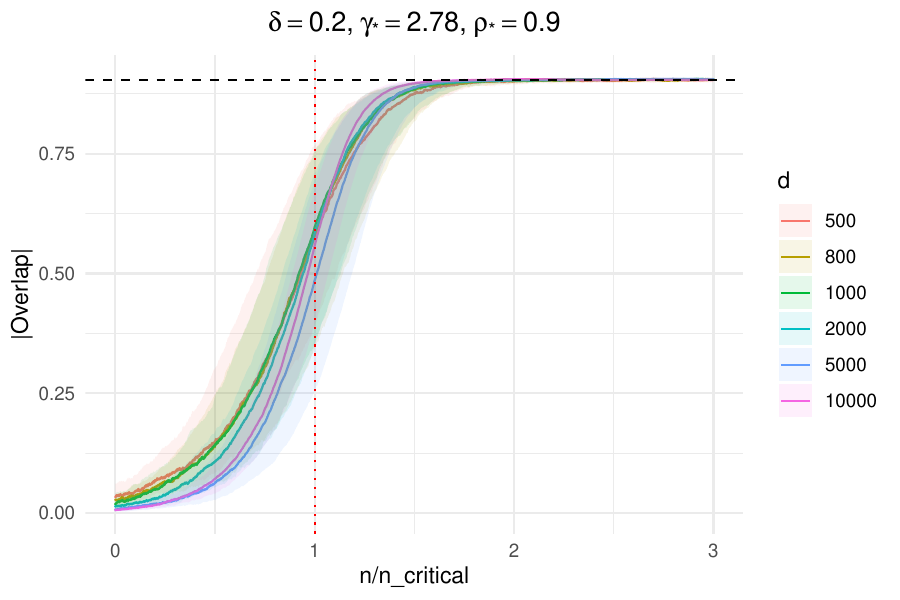}
    \end{subfigure}
    \vfill
    \caption{Performance of Oja's algorithm for $\theta=1$ and $\delta\in\{0.1, 0.2\}$ across different $d$. x-axis is rescaled to $n/n_*$ so that the vertical red dotted line shows the phase transition threshold at $1$. Horizontal dotted line shows $\rho_*$.}
    \label{fig:theta_1_delta_0.1_0.2}
\end{figure}

\begin{figure}[p]
    \centering
    \vfill
    \begin{subfigure}{0.8\textwidth}
        \centering
        \includegraphics[width=\linewidth]{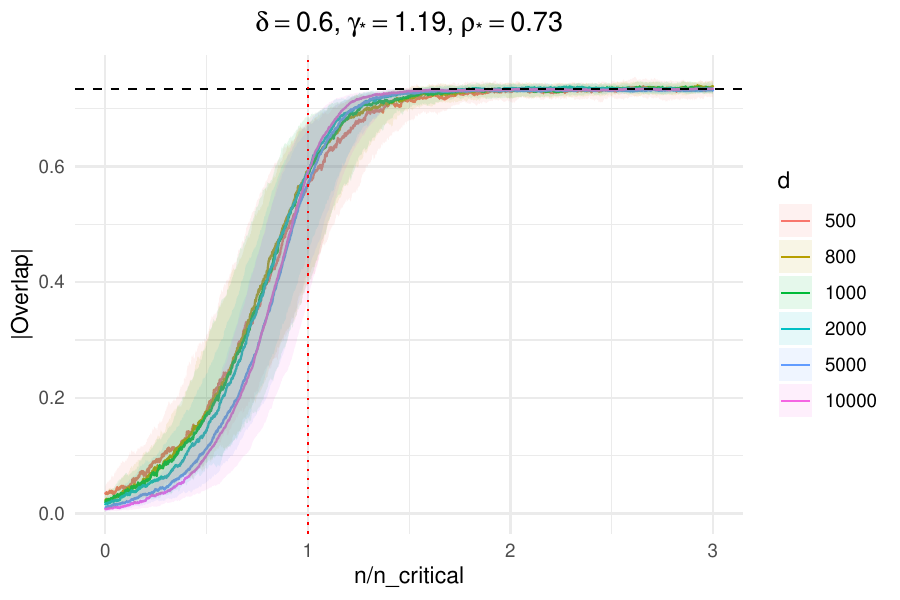}
    \end{subfigure}

    \vspace{2em}

    \begin{subfigure}{0.8\textwidth}
        \centering
        \includegraphics[width=\linewidth]{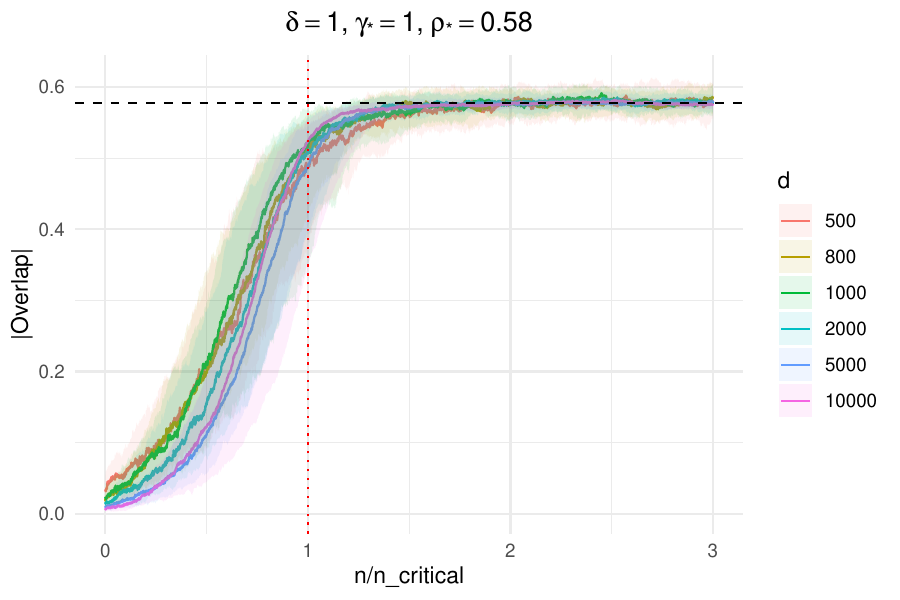}
    \end{subfigure}
    \vfill
    \caption{Performance of Oja's algorithm for $\theta=1$ and $\delta\in\{0.6, 1\}$ across different $d$. x-axis is rescaled to $n/n_*$ so that the vertical red dotted line shows the phase transition threshold at $1$. Horizontal dotted line shows $\rho_*$.}
    \label{fig:theta_1_delta_0.6_1}
\end{figure}

\begin{figure}[p]
    \centering
    \vfill
    \begin{subfigure}{0.8\textwidth}
        \centering
        \includegraphics[width=\linewidth]{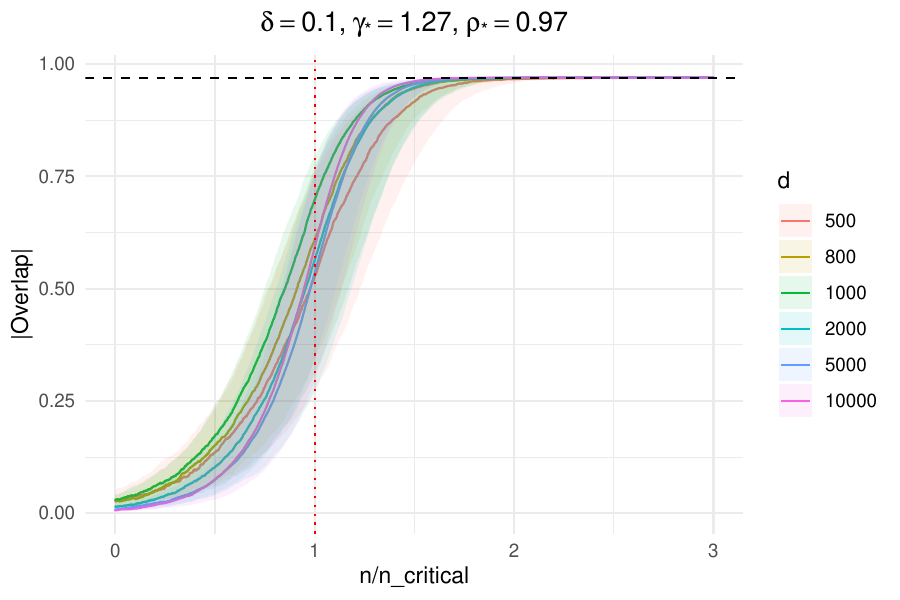}
    \end{subfigure}

    \vspace{2em}

    \begin{subfigure}{0.8\textwidth}
        \centering
        \includegraphics[width=\linewidth]{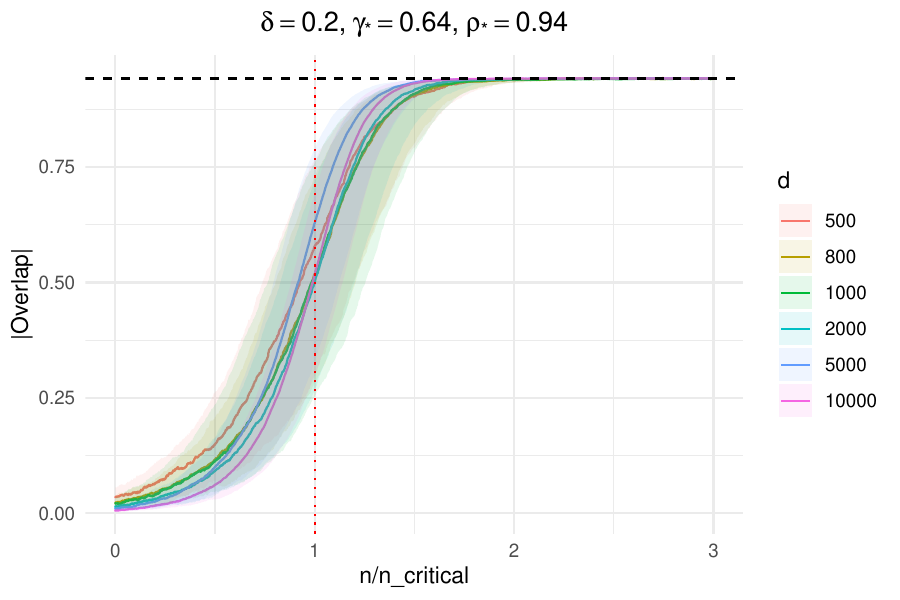}
    \end{subfigure}
    \vfill
    \caption{Performance of Oja's algorithm for $\theta=2$ and $\delta\in\{0.1, 0.2\}$ across different $d$. x-axis is rescaled to $n/n_*$ so that the vertical red dotted line shows the phase transition threshold at $1$. Horizontal dotted line shows $\rho_*$.}
    \label{fig:theta_2_delta_0.1_0.2}
\end{figure}

\begin{figure}[p]
    \centering
    \vfill
    \begin{subfigure}{0.8\textwidth}
        \centering
        \includegraphics[width=\linewidth]{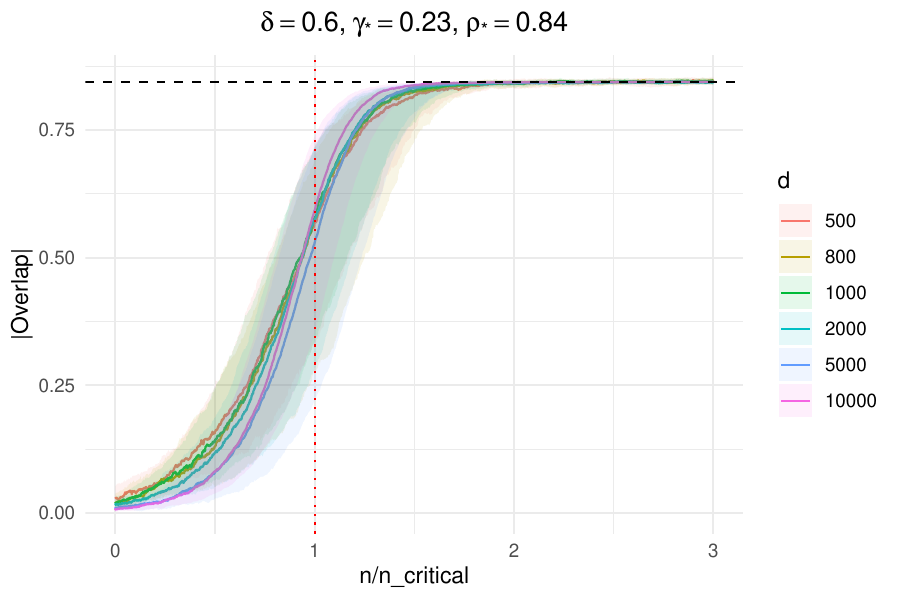}
    \end{subfigure}

    \vspace{2em}

    \begin{subfigure}{0.8\textwidth}
        \centering
        \includegraphics[width=\linewidth]{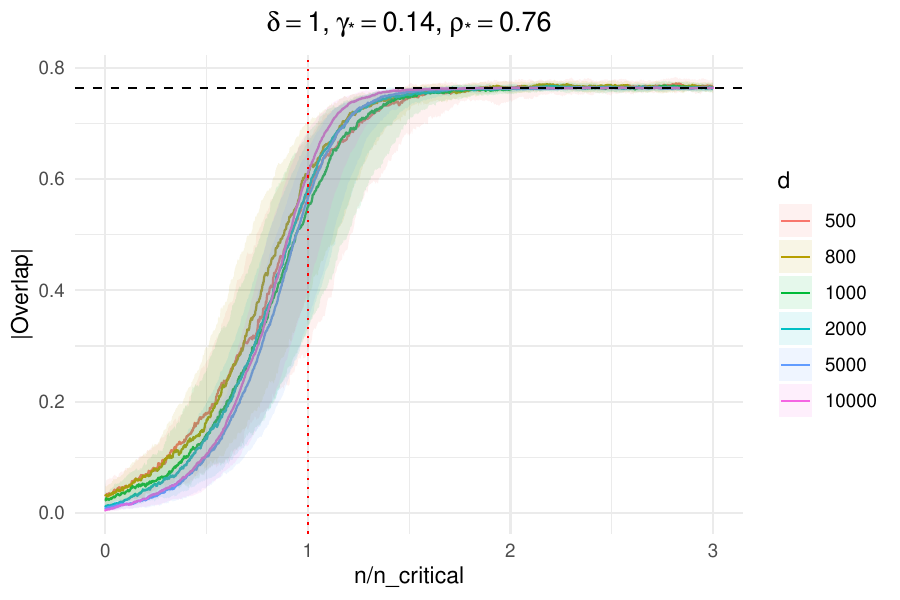}
    \end{subfigure}
    \vfill
    \caption{Performance of Oja's algorithm for $\theta=2$ and $\delta\in\{0.6, 1\}$ across different $d$. x-axis is rescaled to $n/n_*$ so that the vertical red dotted line shows the phase transition threshold at $1$. Horizontal dotted line shows $\rho_*$.}
    \label{fig:theta_2_delta_0.6_1}
\end{figure}

\begin{figure}[p]
    \centering

    \begin{subfigure}{0.36\textwidth}
        \centering
        \includegraphics[width=\linewidth]{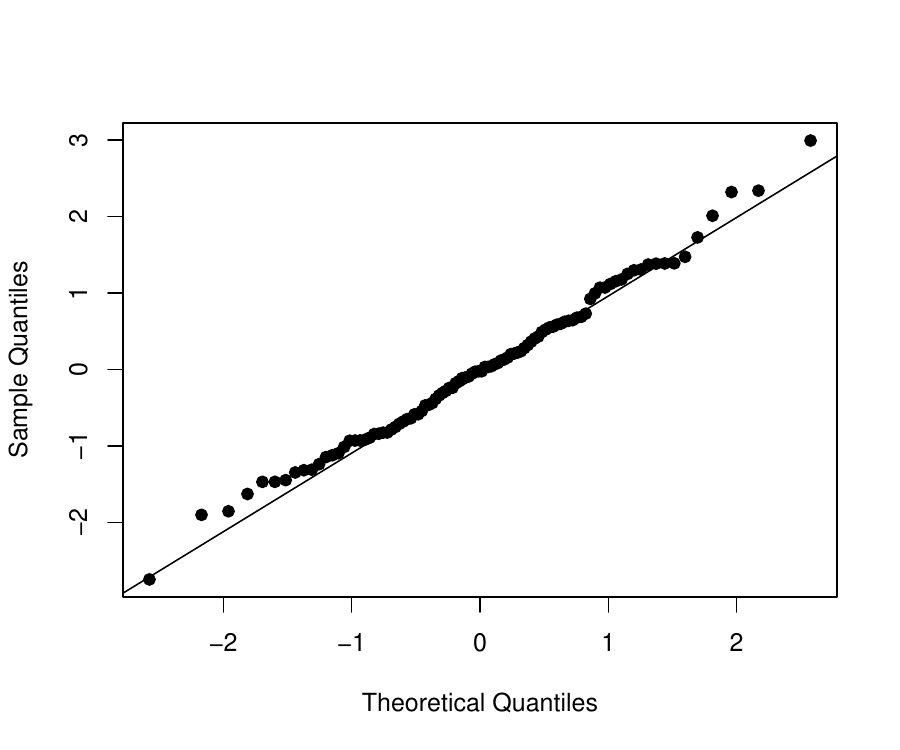}
    \end{subfigure}
    \hspace{0.02\textwidth}
    \begin{subfigure}{0.36\textwidth}
        \centering
        \includegraphics[width=\linewidth]{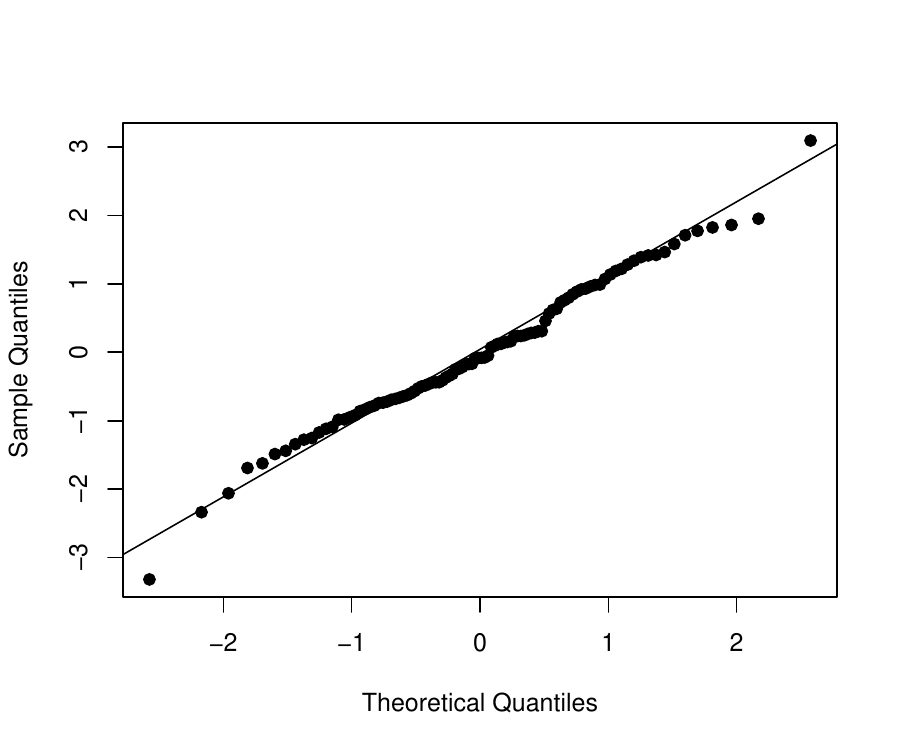}
    \end{subfigure}

    \vspace{1em}

    \begin{subfigure}{0.36\textwidth}
        \centering
        \includegraphics[width=\linewidth]{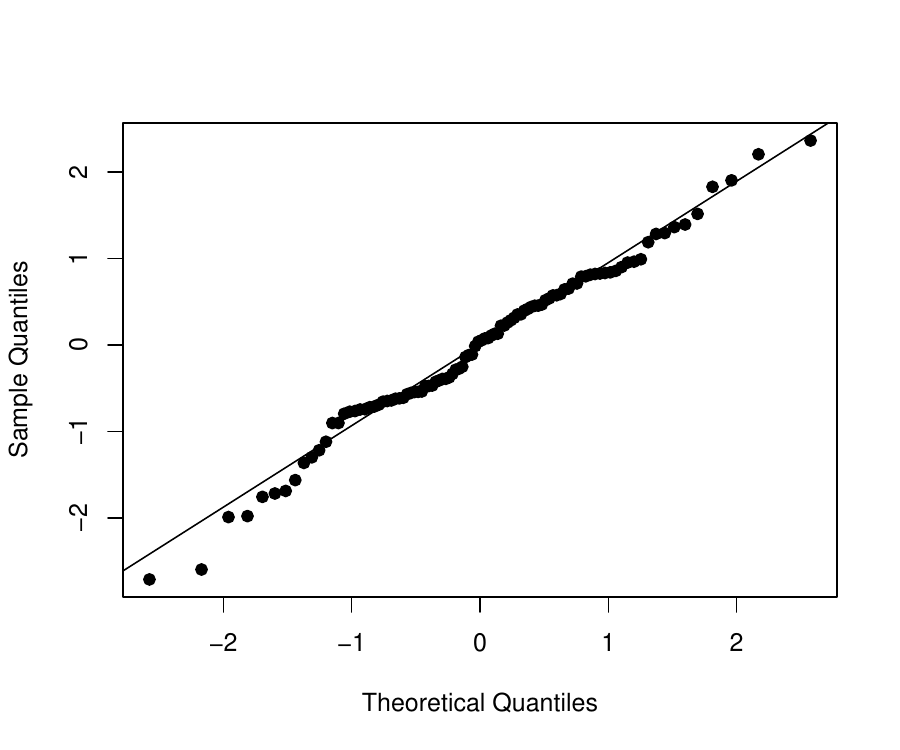}
    \end{subfigure}
    \hspace{0.02\textwidth}
    \begin{subfigure}{0.36\textwidth}
        \centering
        \includegraphics[width=\linewidth]{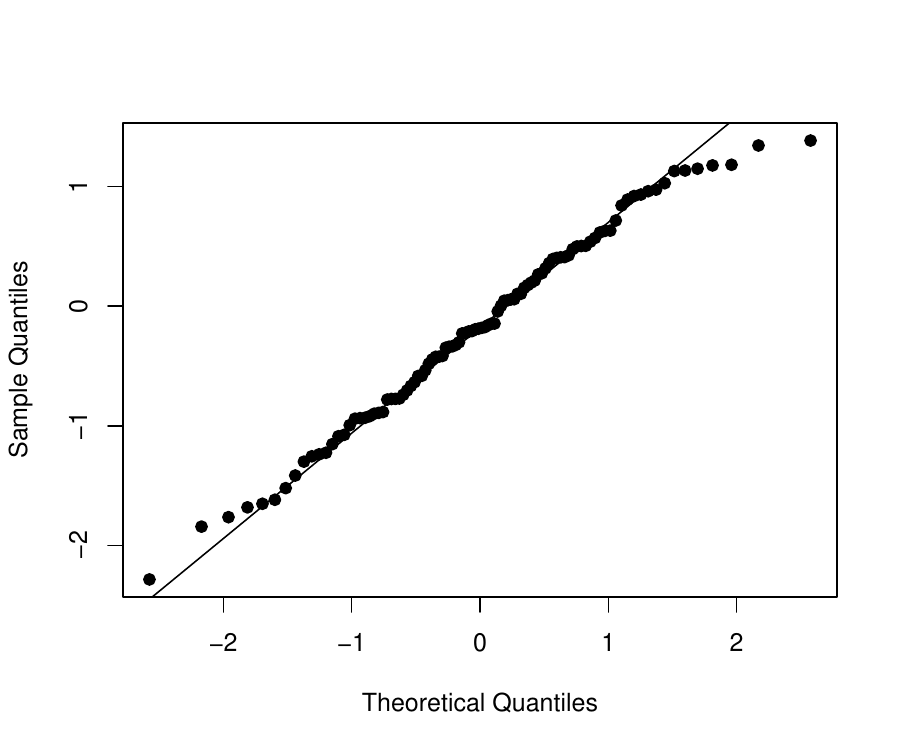}
    \end{subfigure}

    \vspace{1em}

    \begin{subfigure}{0.36\textwidth}
        \centering
        \includegraphics[width=\linewidth]{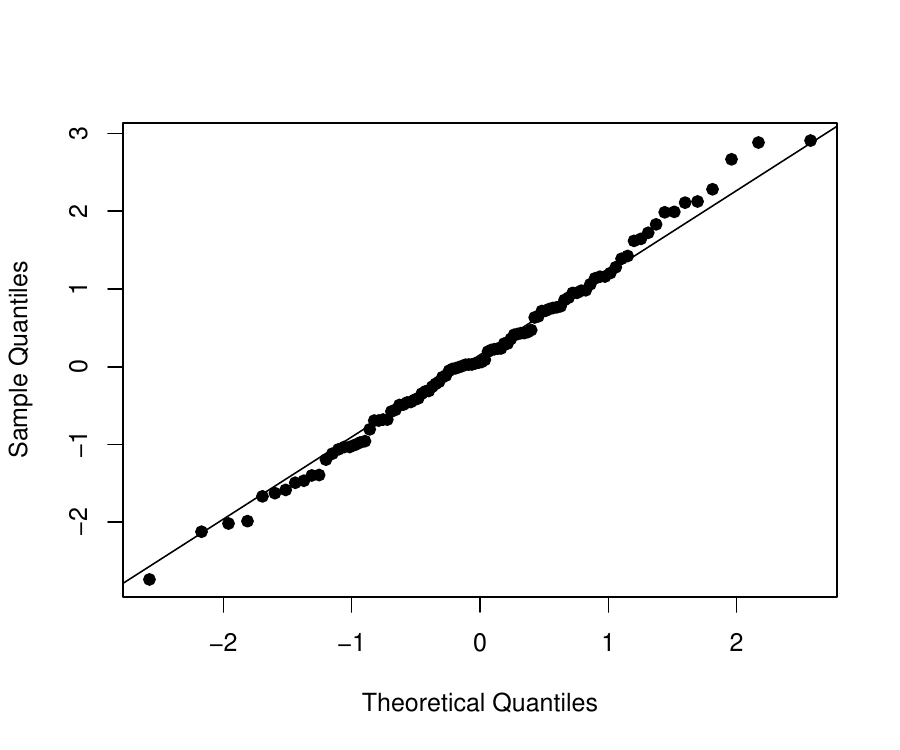}
    \end{subfigure}
    \hspace{0.02\textwidth}
    \begin{subfigure}{0.36\textwidth}
        \centering
        \includegraphics[width=\linewidth]{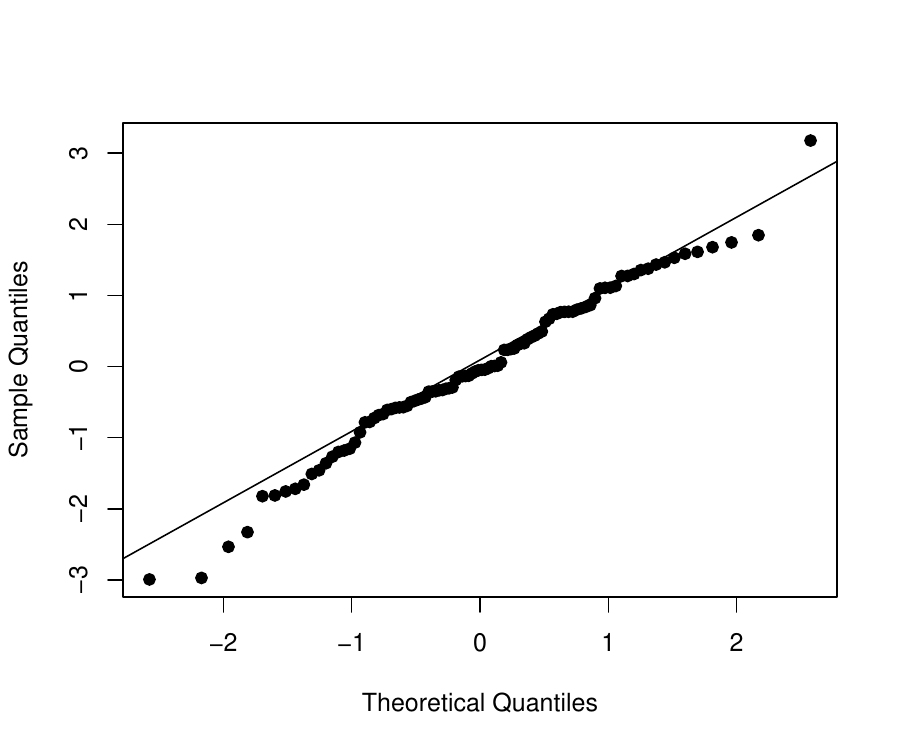}
    \end{subfigure}

    \vspace{1em}

    \begin{subfigure}{0.36\textwidth}
        \centering
        \includegraphics[width=\linewidth]{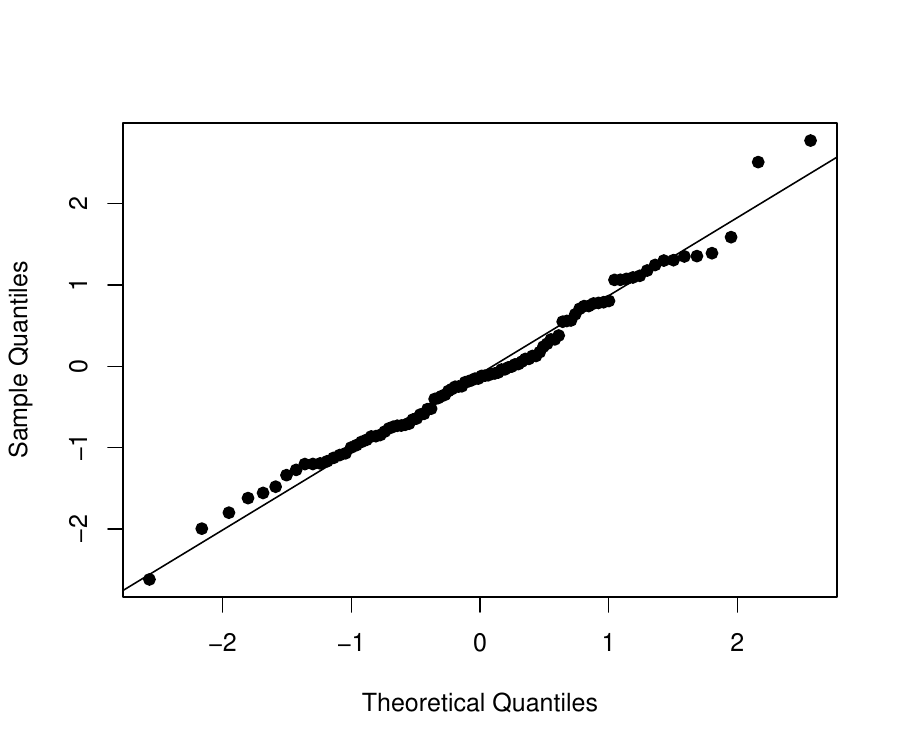}
    \end{subfigure}
    \hspace{0.02\textwidth}
    \begin{subfigure}{0.36\textwidth}
        \centering
        \includegraphics[width=\linewidth]{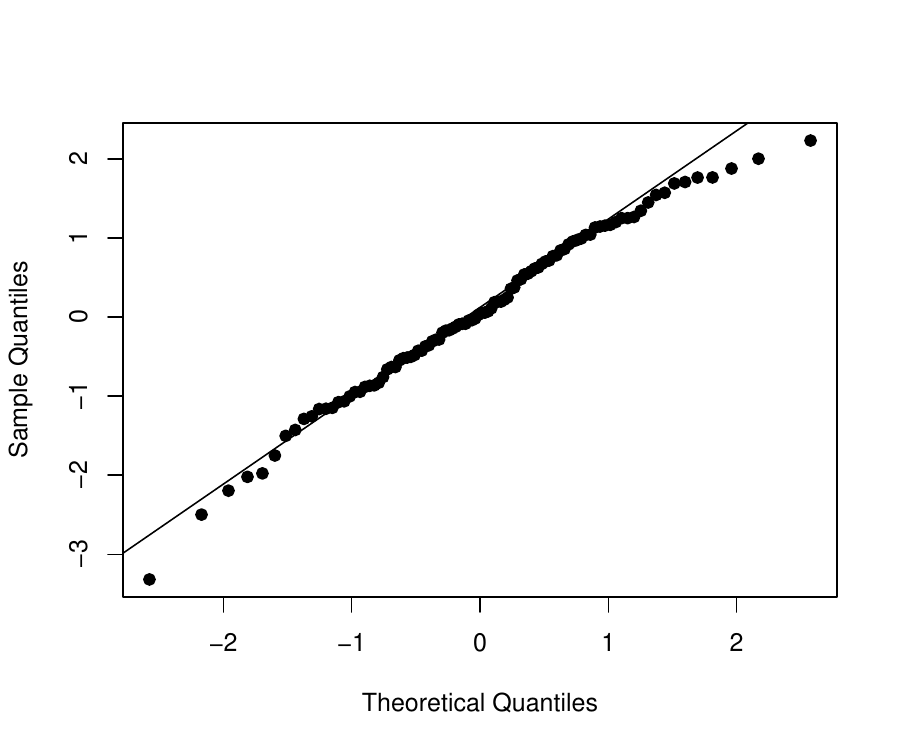}
    \end{subfigure}

    \caption{QQ plots of transformed (signed) overlaps at $d=10000$, $n=n_*$. Left column: $\theta=1$; right column: $\theta=2$. Rows correspond (top to bottom) to $\delta=0.1, 0.2, 0.6, 1$.}
    \label{fig:qq_theta_both}
\end{figure}

\medskip

Tables 1-8 show the values of $n_*=[\gamma_*d\log d]$ -- the critical number of iterations -- and $n_{\max}$ -- the total number of iterations -- for different values of $\theta,\delta,d$. The reader can immediately see for how long the algorithm needed to be run to exhibit the high dimensional effects. Hundreds of thousands of iterations were normal for smaller $\theta$.

\begin{table}[p]
    \centering
    \begin{tabular}{|c|c|c|}
        \hline
        $d$ & $n_*$ & $n_{\max}$ \\
        \hline
        500 & 16354 & 49063\\
        800 & 28146 & 84437\\
        1000 & 36357 & 109070 \\
        2000 & 80009 & 240028 \\
        5000 & 224137 & 672410 \\
        10000 & 484755 & 1454264\\
        \hline
    \end{tabular}
    \caption{$\theta=1,\delta=0.1,\gamma_*=5.26,\rho_*=0.95$}
\end{table}

\begin{table}[p]
    \centering
    \begin{tabular}{|c|c|c|}
        \hline
        $d$ & $n_*$ & $n$ \\
        \hline
        500 & 8631 & 25894\\
        800 & 14855 & 44564\\
        1000 & 19188 & 57565 \\
        2000 & 42227 & 126682 \\
        5000 & 118294 & 354883 \\
        10000 & 255843 & 767528\\
        \hline
    \end{tabular}
    \caption{$\theta=1,\delta=0.2,\gamma_*=2.78,\rho_*=0.9$}
\end{table}

\begin{table}[p]
    \centering
    \begin{tabular}{|c|c|c|}
        \hline
        $d$ & $n_*$ & $n$ \\
        \hline
        500 & 3699 & 11098\\
        800 & 6366 & 19099\\
        1000 & 8224 & 24671 \\
        2000 & 18097 & 54292 \\
        5000 & 50698 & 152093 \\
        10000 & 109647 & 328941\\
        \hline
    \end{tabular}
    \caption{$\theta=1,\delta=0.6,\gamma_*=1.19,\rho_*=0.73$}
\end{table}

\begin{table}[p]
    \centering
    \begin{tabular}{|c|c|c|}
        \hline
        $d$ & $n_*$ & $n$ \\
        \hline
        500 & 3107 & 9322\\
        800 & 5348 & 16043\\
        1000 & 6908 & 20723 \\
        2000 & 15202 & 45605 \\
        5000 & 42586 & 127758 \\
        10000 & 92103 & 276310\\
        \hline
    \end{tabular}
    \caption{$\theta=1,\delta=1,\gamma_*=1,\rho_*=0.58$}
\end{table}

\begin{table}[p]
    \centering
    \begin{tabular}{|c|c|c|}
        \hline
        $d$ & $n_*$ & $n$ \\
        \hline
        500 & 3933 & 11800\\
        800 & 6769 & 20308\\
        1000 & 8744 & 26232 \\
        2000 & 19243 & 57728 \\
        5000 & 53906 & 161719 \\
        10000 & 116587 & 349760\\
        \hline
    \end{tabular}
    \caption{$\theta=2,\delta=0.1,\gamma_*=1.27,\rho_*=0.97$}
\end{table}

\begin{table}[p]
    \centering
    \begin{tabular}{|c|c|c|}
        \hline
        $d$ & $n_*$ & $n$ \\
        \hline
        500 & 1992 & 5976\\
        800 & 3428 & 10284\\
        1000 & 4428 & 13284 \\
        2000 & 9745 & 29234 \\
        5000 & 27299 & 81896 \\
        10000 & 59041 & 177122\\
        \hline
    \end{tabular}
    \caption{$\theta=2,\delta=0.2,\gamma_*=0.64,\rho_*=0.94$}
\end{table}

\begin{table}[p]
    \centering
    \begin{tabular}{|c|c|c|}
        \hline
        $d$ & $n_*$ & $n$ \\
        \hline
        500 & 700 & 2100\\
        800 & 1204 & 3613\\
        1000 & 1556 & 4667 \\
        2000 & 3424 & 10271 \\
        5000 & 9591 & 28774 \\
        10000 & 20744 & 62232\\
        \hline
    \end{tabular}
    \caption{$\theta=2,\delta=0.2,\gamma_*=0.23,\rho_*=0.84$}
\end{table}

\begin{table}[p]
    \centering
    \begin{tabular}{|c|c|c|}
        \hline
        $d$ & $n_*$ & $n$ \\
        \hline
        500 & 444 & 1332\\
        800 & 764 & 2292\\
        1000 & 987 & 2960 \\
        2000 & 2172 & 6515 \\
        5000 & 6084 & 18251 \\
        10000 & 13158 & 39473\\
        \hline
    \end{tabular}
    \caption{$\theta=2,\delta=0.2,\gamma_*=0.14,\rho_*=0.76$}
\end{table}

\section{Variant of Oja: Effect of Spherical Gradient}\label{sec:BAGJ}

We revisit Oja's unnormalized update step from (\ref{eqn:oja_main_recursion}):
\begin{align*}
    \tilde v_k &= \hat v_{k-1} + (\delta/d) \langle X_k, \hat v_{k-1}\rangle X_k
\end{align*}

\medskip

Henceforth, for the rest of this section, we will call this \textit{ordinary} Oja; $\tilde v_k,\hat v_k$ will denote updates from ordinary Oja. Define the loss function $L(v;X)=-\langle v,X\rangle^2/2$, thus $$\nabla L(v;X)=-\langle v,X\rangle X$$ Hence, Oja's unnormalized update may be viewed as taking a gradient descent step on this loss with data $X_k$ and step size $\delta/d$:
\begin{align*}
    \tilde v_k &= \hat v_{k-1} - (\delta/d) \nabla L(\hat v_{k-1};X_k)
\end{align*}

\medskip

Recently, \citet{arous2021online} consider online stochastic SGD with loss function $L$ where the authors replace the ordinary gradient $\nabla L(v;X)$ with the spherical gradient $\nabla^\sph L(v;X):=\nabla L(v;X)-(\partial L/\partial r)(\partial v/\partial r)$ where $r=\|v\|$ is the radial part of $v$. Specializing to our loss $L(v;X)=-\langle v,X\rangle^2/2$, the spherical gradient at $v$ with $\|v\|=1$ equals
\begin{align*}
    \nabla^\sph L(v;X) &= -\langle v, X\rangle X + \langle v,X\rangle^2v
\end{align*}

\medskip

Thus, the online SGD algorithm in \citet{arous2021online} using this spherical gradient produces a variant of ordinary Oja, which we will call \textit{spherical} Oja, as follows.
\begin{align}\label{eqn:spherical_oja}
\begin{split}
    \tilde v_k^\sph &= \hat v^\sph + (\delta/d)(\langle \hat v_{k-1}^\sph, X_k\rangle X_k - \langle \hat v_{k-1}^\sph, X_k\rangle^2 \hat v_k^\sph)\\
\hat v_k^\sph &= \tilde v_k^\sph/\|\tilde v_k^\sph\|
\end{split}
\end{align}

\medskip

Given that $\hat v_k$ is constrained on the sphere $\sS^{d-1}$, one may surmise that spherical Oja is superior to ordinary Oja, as spherical Oja ``uses more structure". However, as we show in the following theorem, ordinary Oja and spherical Oja have the exact same phase transition point $\gamma_*$, the same limiting stable overlap $\rho_*$, and even the exact same non-degenerate asymptotic distribution of the overlap at criticality.

\medskip

\begin{theorem}\label{thm:BAGJ}
    Suppose, for $n\geq 1$, $X_n\stackrel{iid}{\sim}\gN(0,\Sigma)$ where $\Sigma=\theta^2v_0v_0^\top+I\in\sR^{d\times d}$. Starting at a random initialization $\hat v_0\sim Unif(\sS^{d-1})$, let $\hat v_n^\sph$ be the output from the spherical Oja algorithm after $n$ steps. Let $\gamma_*,\rho_*$ denote the phase transition location and stable overlap from the study on ordinary Oja defined in (\ref{eqn:rho_gamma_crit_defns}). Then, as $n/d\log d\to\gamma\in(0,\infty)$,
    \begin{align*}
        |\langle \hat v_n^\sph, v_0\rangle|\stackrel{p}{\to}\begin{cases}
            0, & \gamma <\gamma_*\\
            \rho_*, & \gamma > \gamma_*
        \end{cases}
    \end{align*}Further, at criticality, when $n=[\gamma_*d\log d+\eta d]$ and $d\to\infty,\eta\in \sR$, the resulting correlation is random and has the same distribution as in ordinary Oja at criticality:
    \begin{align*}
        |\langle \hat v_n^\sph,v_0\rangle|\stackrel{d}{\to}\dfrac{\rho_* |G|\exp(\eta/2\gamma_*)}{\sqrt{\rho_*^4+G^2\exp(\eta/\gamma_*)}}
    \end{align*}where $G\sim\gN(0,1)$.
\end{theorem}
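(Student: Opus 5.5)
The plan is to show that spherical Oja yields, for the signed overlap $\rho_k^\sph:=\langle \hat v_k^\sph,v_0\rangle$, a recursion of exactly the form (\ref{eqn:recursion_final}) in Lemma \ref{lemma:main_recursion}, with the \emph{same} constants $\alpha=\delta(\theta^2-\delta/2)$ and $\beta=\delta\theta^2(1+\delta/2)$. The new martingale difference and remainder sequences must again have uniformly bounded moments of all orders. Once this is in place, the proofs of Theorems \ref{thm:subcritical}, \ref{thm:supercritical} and \ref{thm:critical} go through verbatim. Those proofs use only the unfolded recursion (\ref{eqn:recursion_rho_unfolded_final}), the moment bounds, the conditional variance of $M_k$ near $\rho=0$ (which drives the Gaussian $G$), and the uniform initialization. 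So the theorem reduces to a recursion-level identity plus a check that the leading conditional variance of $M_k$ at $\rho_{k-1}\approx 0$ agrees with ordinary Oja's.

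\textbf{Step 1: the exact overlap recursion.} Write $B_k=\langle X_k,\hat v_{k-1}^\sph\rangle$ and $A_k,C_k,D_k$ as before; Lemma \ref{lemma:Xk_components_indep} applies unchanged, since it depends only on $\hat v_{k-1}^\sph$ being $\gF_{k-1}$-measurable and a unit vector. Reading the update with $\hat v_{k-1}^\sph$ in both places, we have $\tilde v_k^\sph=(1-\epsilon B_k^2)\hat v_{k-1}^\sph+\epsilon B_kX_k$ with $\epsilon=\delta/d$. This gives
\[
\rho_k^\sph=\frac{(1-\epsilon B_k^2)\rho_{k-1}+\epsilon A_kB_k}{\big[(1-\epsilon B_k^2)^2+2\epsilon(1-\epsilon B_k^2)B_k^2+\epsilon^2B_k^2\|X_k\|^2\big]^{1/2}},
\]
and the denominator squared equals $1+\epsilon^2B_k^2(\|X_k\|^2-B_k^2)$.

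\textbf{Step 2: expansion to order $1/d$.} Since $\|X_k\|^2=d+O(\sqrt d)$, the denominator is $1+\tfrac{\delta^2}{2d}B_k^2+O(d^{-3/2})$ in the appropriate moment sense. The numerator is $\rho_{k-1}+\tfrac{\delta}{d}(A_kB_k-B_k^2\rho_{k-1})$. Taking conditional expectations with $E[A_kB_k]=\rho_{k-1}(\theta^2+1)$ and $E[B_k^2]=1+\theta^2\rho_{k-1}^2$, the drift is
\[
\frac{\delta}{d}\Big[\rho(\theta^2+1)-\rho(1+\theta^2\rho^2)\Big]-\frac{\delta^2}{2d}\rho(1+\theta^2\rho^2)=\frac{1}{d}\big[\delta(\theta^2-\delta/2)\rho-\delta\theta^2(1+\delta/2)\rho^3\big],
\]
which is exactly $\alpha\rho/d-\beta\rho^3/d$. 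The spherical correction cancels the $2\epsilon B_k^2$ term from ordinary Oja's denominator, and the $-\epsilon B_k^2\rho$ term in the numerator reproduces the same drift. Set $M_k=\delta(A_kB_k-B_k^2\rho_{k-1})-E[\,\cdot\,|\gF_{k-1}]$, plus the centered part of $-\tfrac{\delta^2}{2}\rho_{k-1}B_k^2$ together with the fluctuation $\tfrac{\delta^2}{2}\rho_{k-1}B_k^2(\|X_k\|^2/d-1)$. Everything of order $d^{-2}$ goes into $R_k$. Moment bounds follow from Gaussian moments, $|\rho|\le1$, and the chi-square concentration of $\|D_k\|^2$, as in Lemma \ref{lemma:main_recursion}.

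\textbf{Step 3: matching the noise.} The main obstacle is verifying that the critical-window limit, i.e.\ the distribution of $G$, is the same. In the search phase $\rho_{k-1}=O(d^{-1/2})$, so $M_k\approx\delta A_kC_k$, exactly as for ordinary Oja, with conditional variance $\delta^2(\theta^2+1)$. The subtraction $B_k^2\rho_{k-1}$ is only $O(d^{-1/2})$ there. The sum $d^{-1}\sum c_d^{n-k}M_k$ with $n\approx d\log d/(2\alpha)$ therefore gives the same Gaussian normalization, and combined with the same $\sqrt d\rho_0\to\gN(0,1)$ it yields the same $G$. I would then re-run the proof of Theorem \ref{thm:critical}: linearize until $\rho$ reaches a small constant, then use the deterministic ODE $\dot\rho=\alpha\rho-\beta\rho^3$. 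Since both the drift and the search-phase noise coincide, the limit formulae are identical. The remaining care is in controlling the $\rho^3$ and remainder terms uniformly, which is identical to the original argument.
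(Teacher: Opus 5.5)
Your proposal is correct and follows essentially the paper's route. You derive the spherical overlap recursion with the same $\alpha,\beta$ (as in Lemma~\ref{lemma:BAGJ_recursion}), note that the martingale increment equals $\delta A_kC_k$ up to terms of size $|\rho_{k-1}|\times(\text{bounded moments})$ so that the analogue of Lemma~\ref{lemma:martingale_simplified} holds, and rerun the ordinary-Oja proofs verbatim.

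Your bookkeeping is slightly cleaner than the paper's: you keep the $O(d^{-1/2})$ fluctuation $\tfrac{\delta^2}{2}\rho_{k-1}B_k^2(\|X_k\|^2/d-1)$ in $M_k$, whereas the paper's expansion silently drops it into a remainder it calls $O(d^{-2})$. One thing to tighten is that $\rho_{k-1}$ is not $O(d^{-1/2})$ throughout the search phase, so justify the replacement $M_k\approx\delta A_kC_k$ via $\E\rho_k^2\le Ce^{2\alpha k/d}/d$, exactly as in Lemma~\ref{lemma:martingale_simplified}.
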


The proof can be found in Section \ref{sec:proofs}. We now experimentally validate Theorem \ref{thm:BAGJ}. We take the exact same setup as in Section \ref{sec:experiments}. Data and random initialization are generated with the exact same seed as those in Section \ref{sec:experiments} with the exact same parameter values $\theta\in\{1,2\}, \delta\in\{0.1, 0.2, 0.6, 1\}$ and $d\in\{500, 800, 1000, 2000, 5000, 10000\}$. Thus the only difference between the experiments in Section \ref{sec:experiments} and those here is the difference in the algorithm.

\medskip

Figures \ref{fig:BAGJ_theta_1_delta_0.1_0.2}, \ref{fig:BAGJ_theta_1_delta_0.6_1}, \ref{fig:BAGJ_theta_2_delta_0.1_0.2} and \ref{fig:BAGJ_theta_2_delta_0.6_1} show that spherical Oja's performance is very similar to Oja's performance, with the same phase transition region and the same stable correlation $\rho_*$. Just as in Section \ref{sec:experiments}, QQ plots for the same transformation of the overlap during criticality also confirm very good agreement with $N(0,1)$ quantiles in Figure \ref{fig:qq_BAGJ_theta_both}.

\begin{figure}[p]
    \centering
    \vfill
    \begin{subfigure}{0.8\textwidth}
        \centering
        \includegraphics[width=\linewidth]{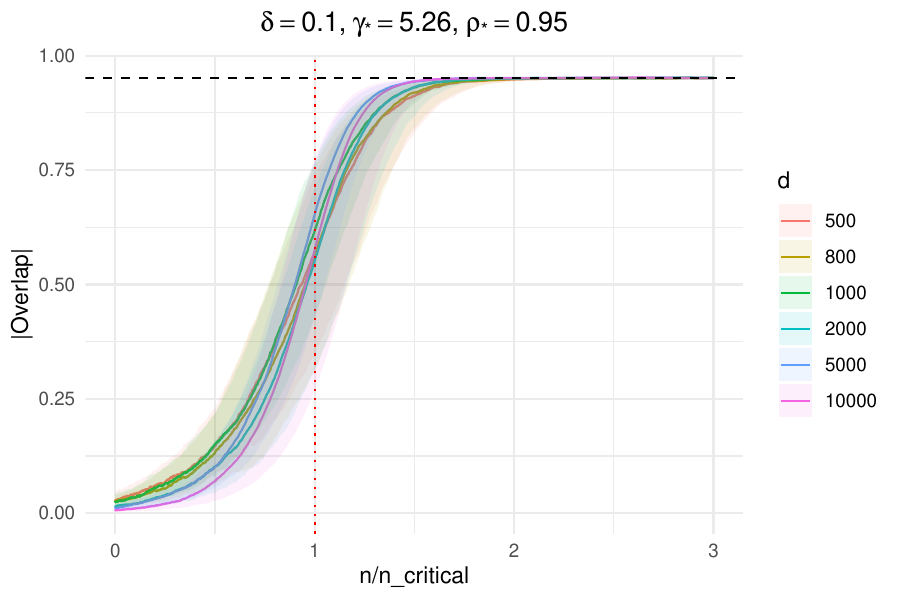}
    \end{subfigure}

    \vspace{2em}

    \begin{subfigure}{0.8\textwidth}
        \centering
        \includegraphics[width=\linewidth]{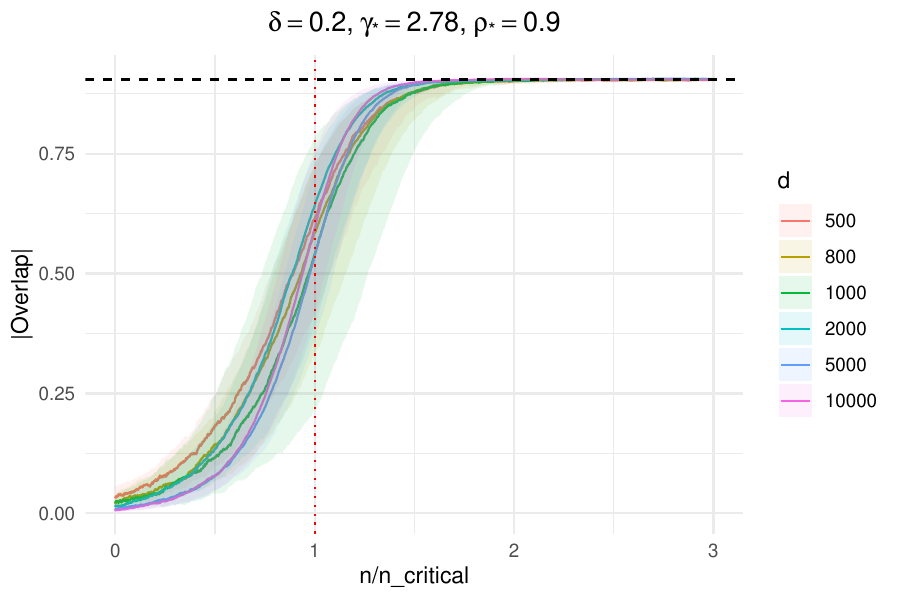}
    \end{subfigure}
    \vfill
    \caption{Performance of spherical Oja for $\theta=1$ and $\delta\in\{0.1, 0.2\}$ across different $d$. x-axis is rescaled to $n/n_*$ so that the vertical red dotted line shows the phase transition threshold at $1$. Horizontal dotted line shows $\rho_*$.}
    \label{fig:BAGJ_theta_1_delta_0.1_0.2}
\end{figure}

\begin{figure}[p]
    \centering
    \vfill
    \begin{subfigure}{0.8\textwidth}
        \centering
        \includegraphics[width=\linewidth]{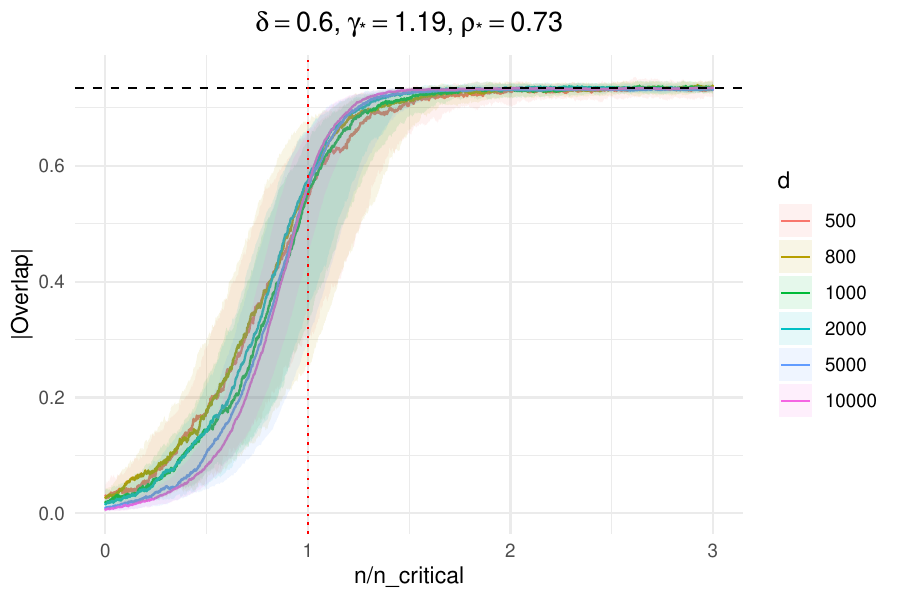}
    \end{subfigure}

    \vspace{2em}

    \begin{subfigure}{0.8\textwidth}
        \centering
        \includegraphics[width=\linewidth]{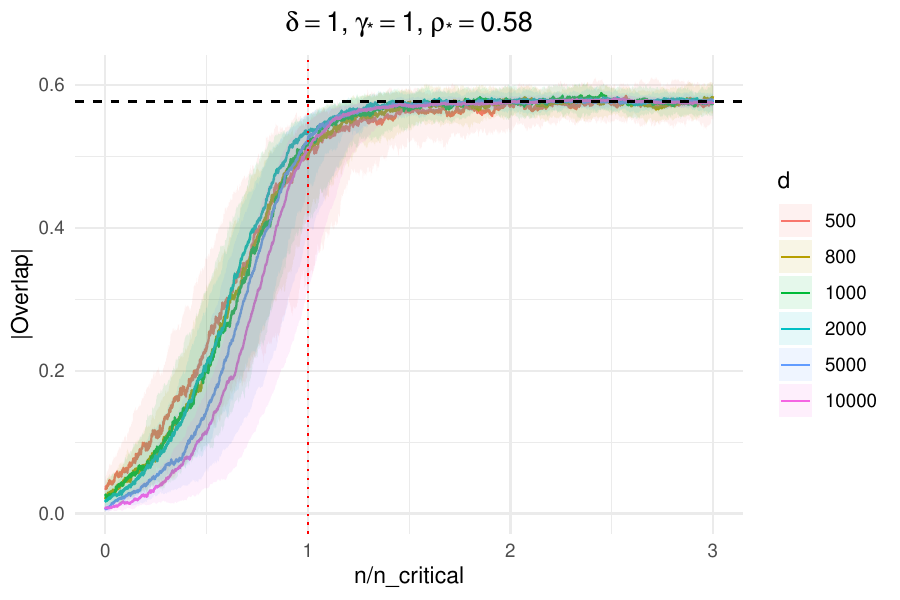}
    \end{subfigure}
    \vfill
    \caption{Performance of spherical Oja for $\theta=1$ and $\delta\in\{0.6, 1\}$ across different $d$. x-axis is rescaled to $n/n_*$ so that the vertical red dotted line shows the phase transition threshold at $1$. Horizontal dotted line shows $\rho_*$.}
    \label{fig:BAGJ_theta_1_delta_0.6_1}
\end{figure}

\begin{figure}[p]
    \centering
    \vfill
    \begin{subfigure}{0.8\textwidth}
        \centering
        \includegraphics[width=\linewidth]{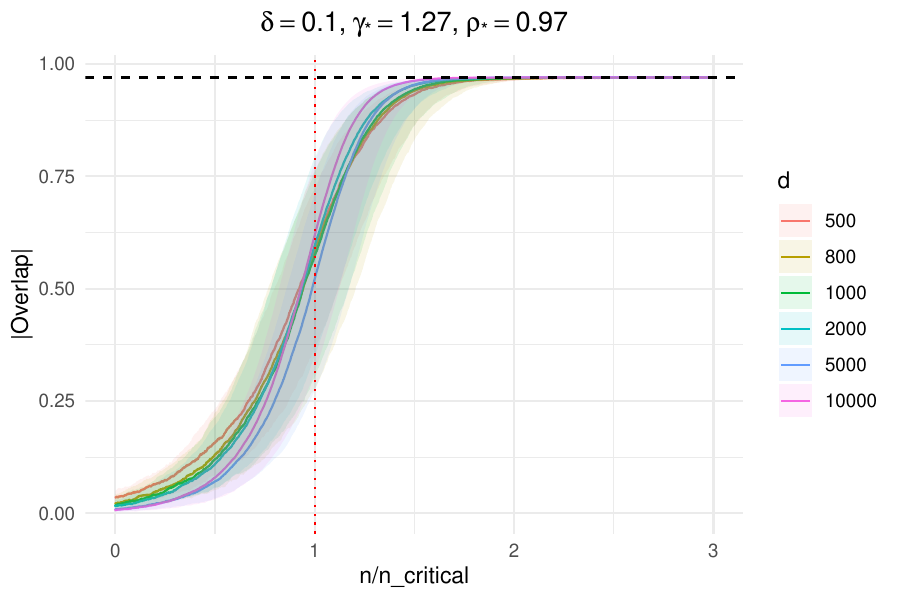}
    \end{subfigure}

    \vspace{2em}

    \begin{subfigure}{0.8\textwidth}
        \centering
        \includegraphics[width=\linewidth]{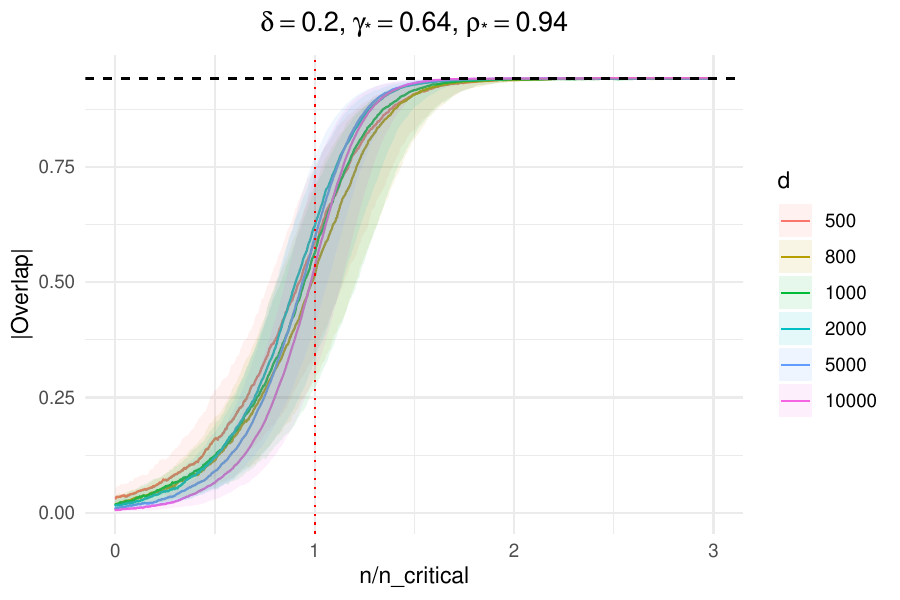}
    \end{subfigure}
    \vfill
    \caption{Performance of spherical Oja for $\theta=2$ and $\delta\in\{0.1, 0.2\}$ across different $d$. x-axis is rescaled to $n/n_*$ so that the vertical red dotted line shows the phase transition threshold at $1$. Horizontal dotted line shows $\rho_*$.}
    \label{fig:BAGJ_theta_2_delta_0.1_0.2}
\end{figure}

\begin{figure}[p]
    \centering
    \vfill
    \begin{subfigure}{0.8\textwidth}
        \centering
        \includegraphics[width=\linewidth]{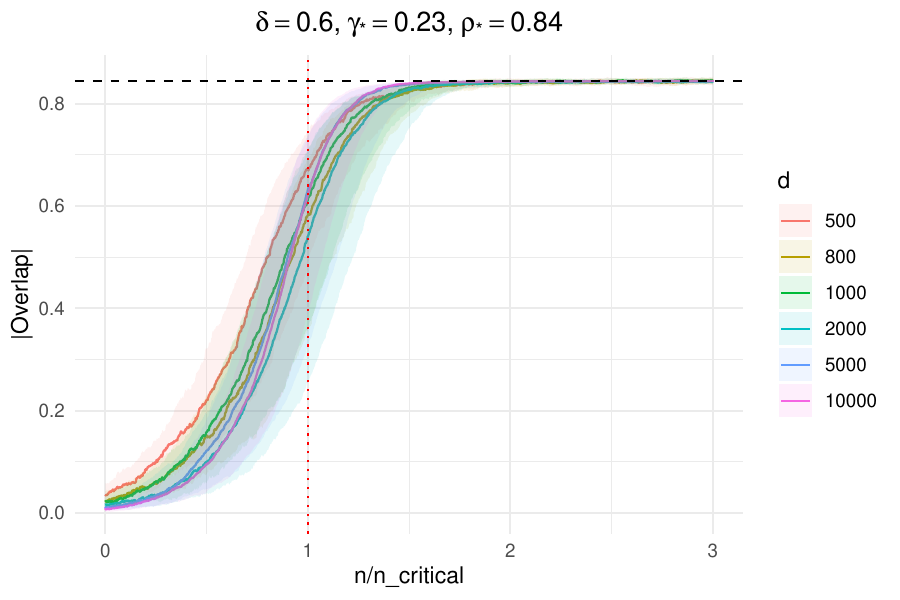}
    \end{subfigure}

    \vspace{2em}

    \begin{subfigure}{0.8\textwidth}
        \centering
        \includegraphics[width=\linewidth]{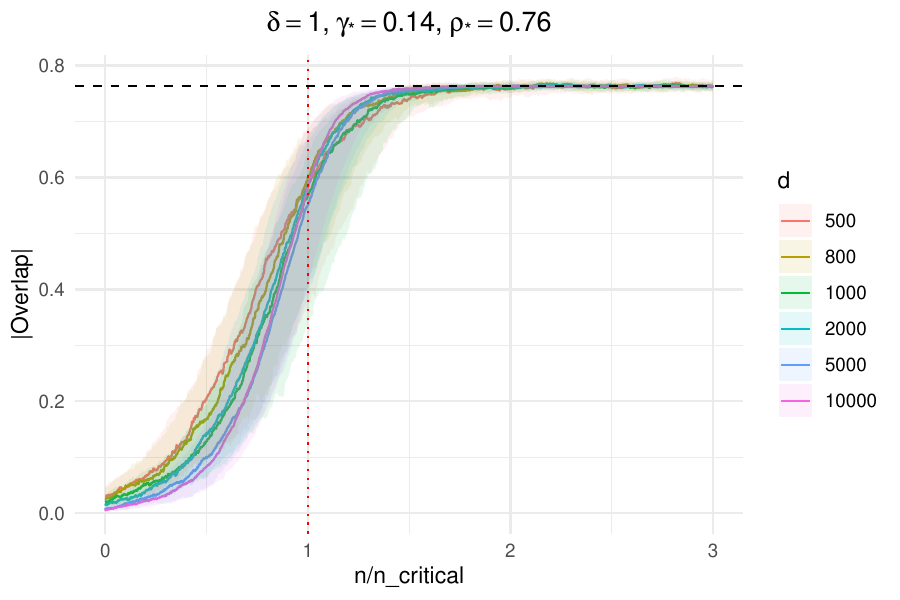}
    \end{subfigure}
    \vfill
    \caption{Performance of spherical Oja for $\theta=2$ and $\delta\in\{0.6, 1\}$ across different $d$. x-axis is rescaled to $n/n_*$ so that the vertical red dotted line shows the phase transition threshold at $1$. Horizontal dotted line shows $\rho_*$.}
    \label{fig:BAGJ_theta_2_delta_0.6_1}
\end{figure}

\begin{figure}[p]
    \centering

    \begin{subfigure}{0.36\textwidth}
        \centering
        \includegraphics[width=\linewidth]{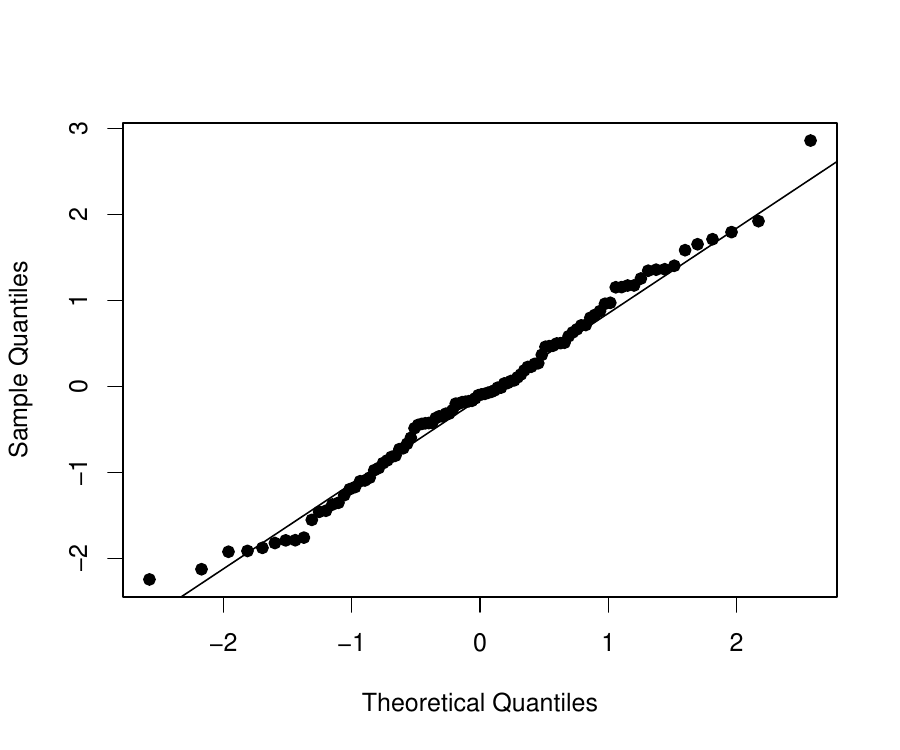}
    \end{subfigure}
    \hspace{0.02\textwidth}
    \begin{subfigure}{0.36\textwidth}
        \centering
        \includegraphics[width=\linewidth]{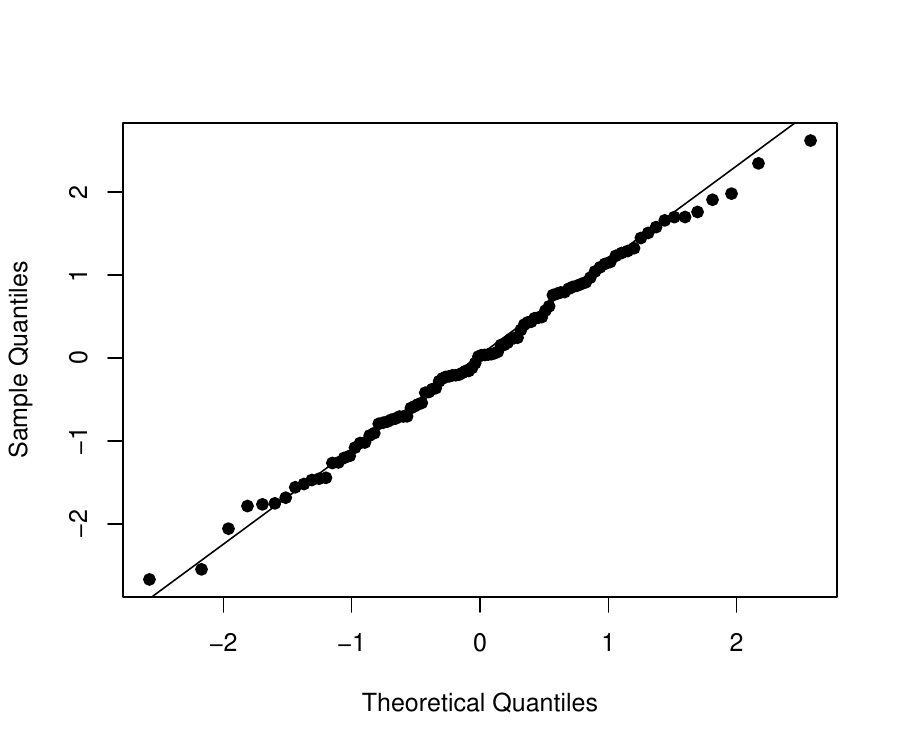}
    \end{subfigure}

    \vspace{1em}

    \begin{subfigure}{0.36\textwidth}
        \centering
        \includegraphics[width=\linewidth]{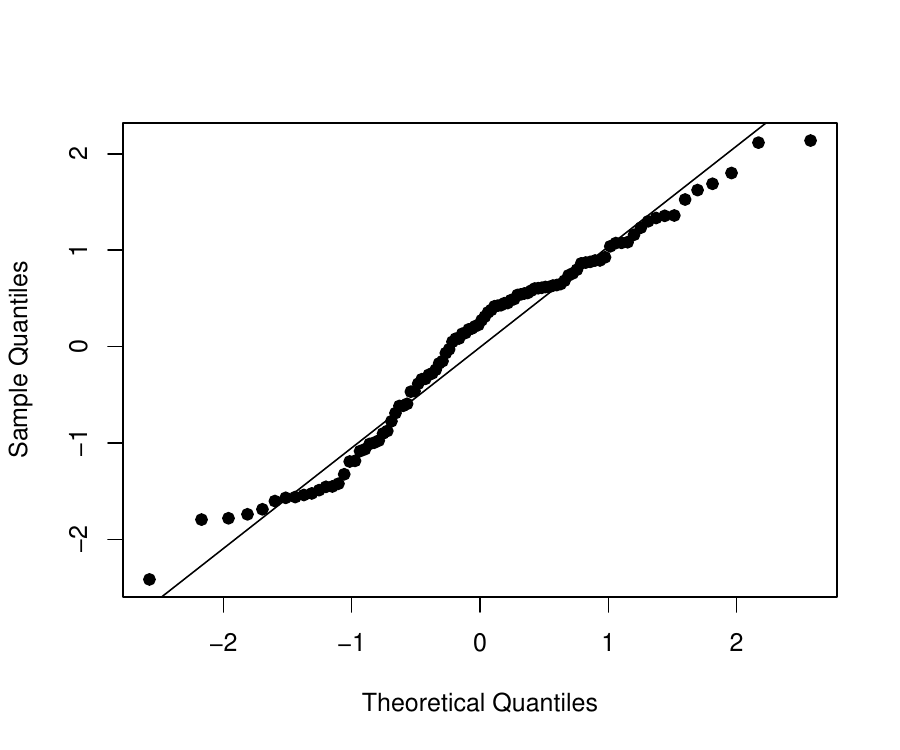}
    \end{subfigure}
    \hspace{0.02\textwidth}
    \begin{subfigure}{0.36\textwidth}
        \centering
        \includegraphics[width=\linewidth]{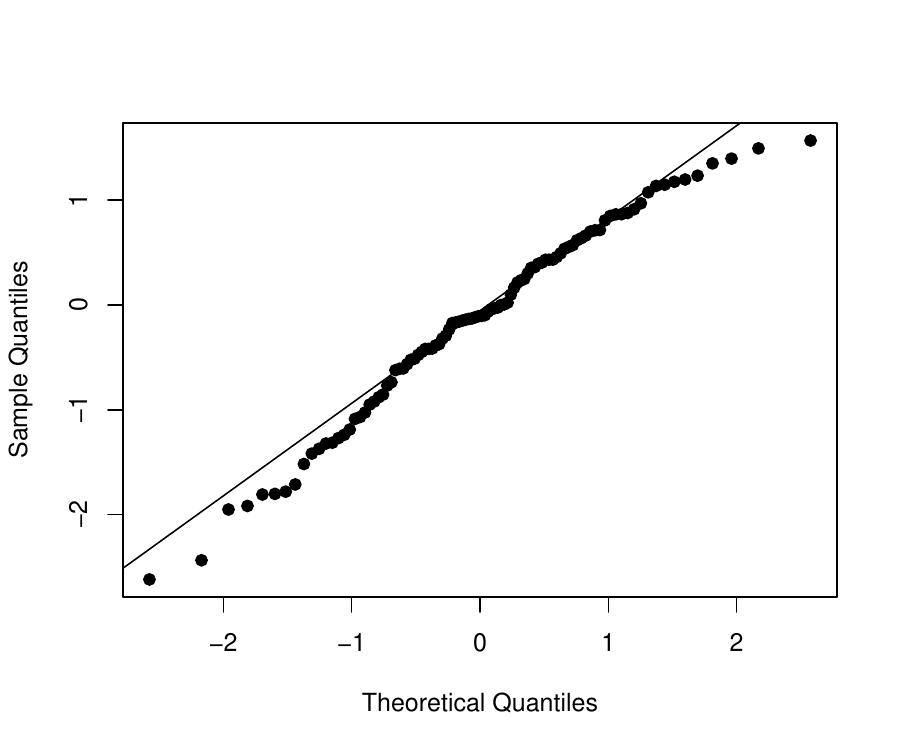}
    \end{subfigure}

    \vspace{1em}

    \begin{subfigure}{0.36\textwidth}
        \centering
        \includegraphics[width=\linewidth]{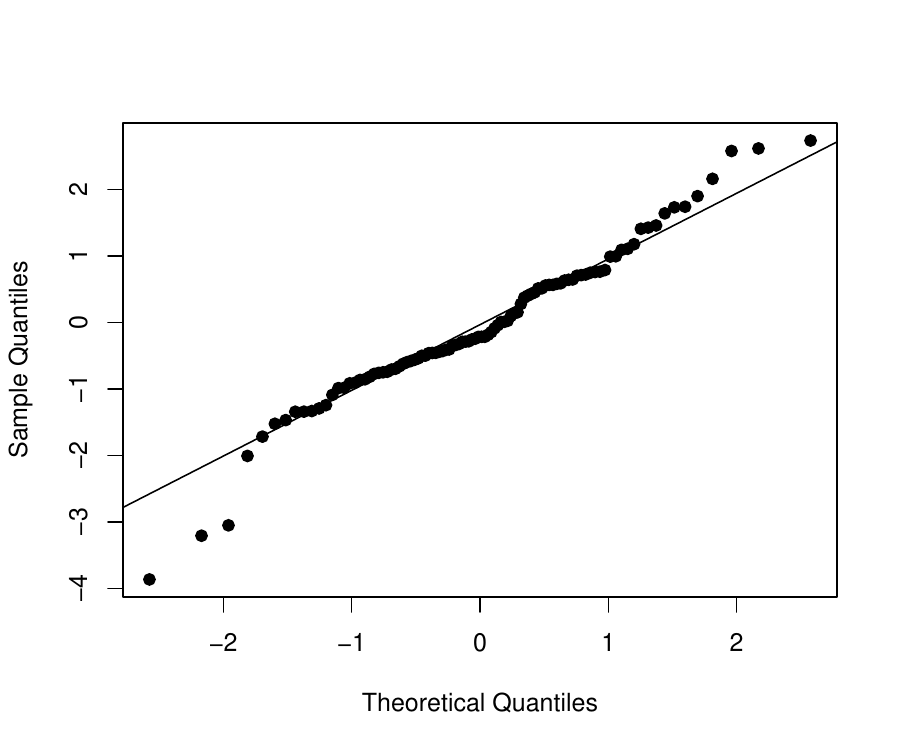}
    \end{subfigure}
    \hspace{0.02\textwidth}
    \begin{subfigure}{0.36\textwidth}
        \centering
        \includegraphics[width=\linewidth]{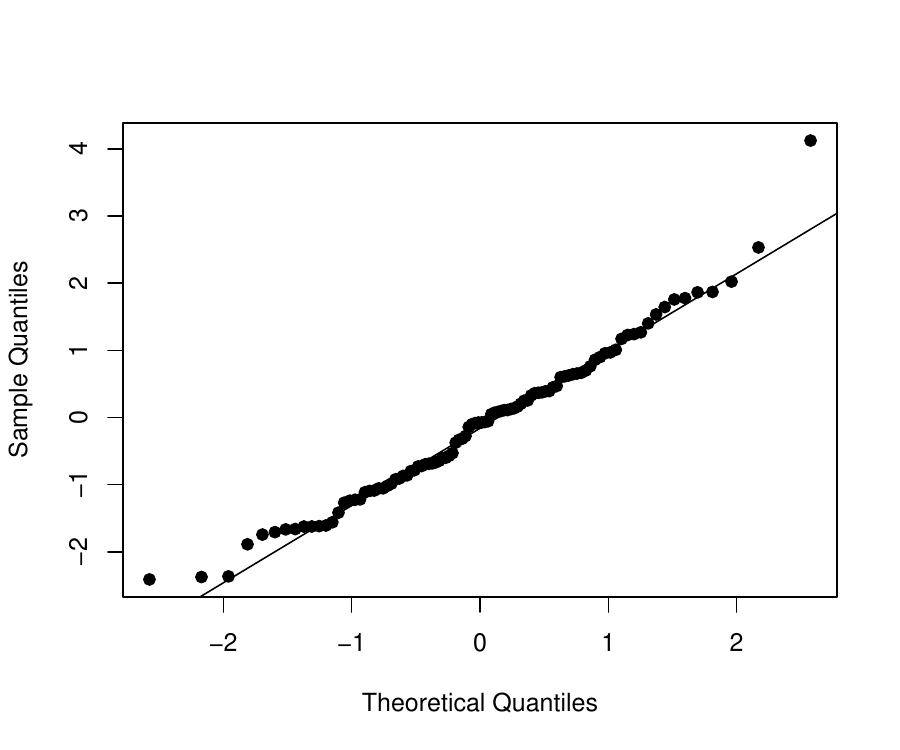}
    \end{subfigure}

    \vspace{1em}

    \begin{subfigure}{0.36\textwidth}
        \centering
        \includegraphics[width=\linewidth]{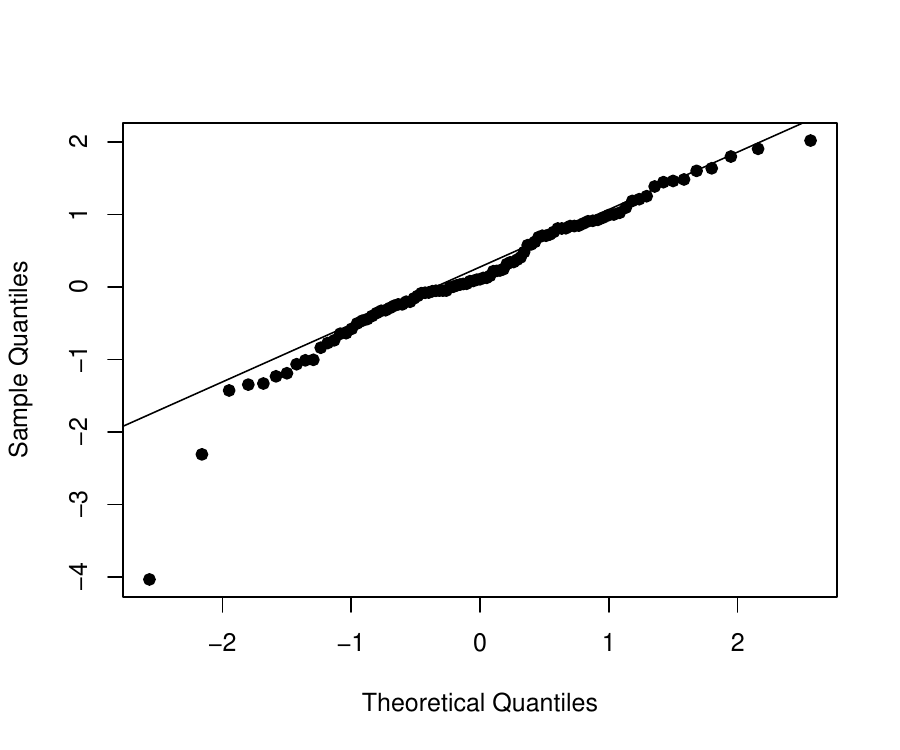}
    \end{subfigure}
    \hspace{0.02\textwidth}
    \begin{subfigure}{0.36\textwidth}
        \centering
        \includegraphics[width=\linewidth]{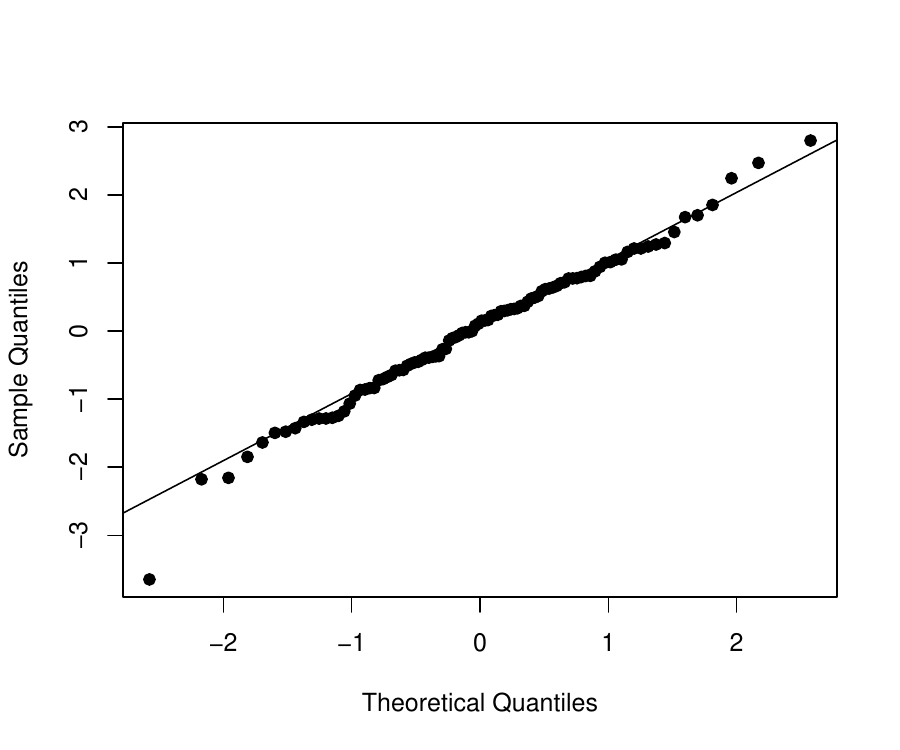}
    \end{subfigure}

    \caption{QQ plots of transformed (signed) overlaps for spherical Oja at $d=10000$, $n=n_*$. Left column: $\theta=1$; right column: $\theta=2$. Rows correspond (top to bottom) to $\delta=0.1, 0.2, 0.6, 1$.}
    \label{fig:qq_BAGJ_theta_both}
\end{figure}

\section{Proofs and Auxiliary Lemmas}\label{sec:proofs}

This section is devoted to the proofs of the main results in Section \ref{sec:main_results}, and also auxiliary lemmas and their proofs. We start with the proof of Lemma \ref{lemma:Xk_components_indep}.

\medskip

\begin{proof}[Proof of Lemma \ref{lemma:Xk_components_indep}]
    Recall that $\hat v_{k-1}=\rho_{k-1}v+\sqrt{1-\rho^2_{k-1}}e_{k-1}$ where $e_{k-1}$ is perpendicular to $v$. Then, given $\gF_{k-1}$, $(A_k,C_k)=(\langle X_k,v\rangle, \langle X_k,e_{k-1}\rangle)$ is a mean zero Gaussian vector with
    \begin{align*}
        Var(A_k|\gF_{k-1})&=v_0^\top \Sigma v_0 = \theta^2 + 1,\\
        Var(C_k|\gF_{k-1})&=e_{k-1}^\top \Sigma e_{k-1} = 1,\\
        Cov(A_k,C_k|\gF_{k-1})&=v^\top\Sigma e_{k-1}=0
    \end{align*}
    Since the distribution of $(A_k,C_k)$ does not depend on $\gF_{k-1}$, this proves that $(A_k,C_k)$ is independent of $\gF_{k-1}$, and unconditionally,
    \begin{align*}
        \begin{pmatrix}
            A_k\\
            C_k
        \end{pmatrix}\sim \gN\begin{pmatrix}
            \begin{pmatrix}
                0 \\
                0
            \end{pmatrix}, \begin{pmatrix}
                \theta^2+1 & 0 \\
                0 & 1
            \end{pmatrix}
        \end{pmatrix}
    \end{align*}

Next, conditional on $\gF_{k-1}$, $D_k=P_{k-1}^\perp X_k\sim \gN(0, P_{k-1}^\perp)$ since $$Var(P_{k-1}^\perp X_k|\gF_{k-1})=P_{k-1}^\perp\Sigma P_{k-1}^\perp=P_{k-1}^\perp$$Consequently, we may write $D_k|\gF_{k-1}\stackrel{d}{=} P^{\perp}_{k-1}Z_k$ for $Z_k\sim\gN(0,I_d)$ independent of $\gF_{k-1}$.

It is a standard result from multivariate analysis (see \citet{mardia2024multivariate} for example) that if $C$ is an orthogonal projection matrix and $Z\sim \gN(0,I)$, then $Z^\top CZ\sim\chi^2_{\text{rank(C)}}$. Taking $C=P_{k-1}^\perp$ in our case, $\text{rank(C)}=d-2$. Thus, conditional on $\gF_{k-1}$, $\|D_k\|^2\sim \chi^2_{d-2}$. Again, since the distribution of $\|D_k\|^2$ does not depend on $\gF_{k-1}$, it follows that $\|D_k\|^2$ is independent of $\gF_{k-1}$ and is unconditionally $\chi^2_{d-2}$.

Finally, we need to show that $D_k$ is independent of $(A_k,C_k)$, given $\gF_{k-1}$. Towards this, note that given $\gF_{k-1}$, $(A_kv, C_ke_{k-1},D_k)$ is multivariate Gaussian with mean $0$. We now execute the following conditional covariance computations.
\begin{align*}
    Cov(D_k,A_k|\gF_{k-1}) &= \E[\langle X_k,v_0\rangle P_{k-1}^\perp X_k|\gF_{k-1}]\\
    &=\E[P_{k-1}^\perp X_k X_k^\top v_0|\gF_{k-1}]\\
    &= P_{k-1}^\perp \Sigma v_0\\
    &= 0
\end{align*}
\begin{align*}
    Cov(D_k,C_k|\gF_{k-1}) &= \E[\langle X_k,e_{k-1}\rangle P_{k-1}^\perp X_k|\gF_{k-1}]\\
    &=\E[P_{k-1}^\perp X_k X_k^\top e_{k-1}|\gF_{k-1}]\\
    &= P_{k-1}^\perp \Sigma e_{k-1}\\
    &= 0
\end{align*}

Since $A_k,C_k,D_k$ are all conditionally zero mean Gaussian given $\gF_{k-1}$ and all covariances are zero, this implies the independence of $D_k$ from $(A_kv,C_ke_{k-1})$. This in particular implies that $\|D_k\|^2$ is independent of $(A_k,C_k)$ given $\gF_{k-1}$, and hence is conditionally independent of $B_k=\rho_{k-1}A_k+\sqrt{1-\rho^2_{k-1}}C_k$. But all three variables, $A_k, C_k,\|D_k\|^2$ are all independent of $\gF_{k-1}$, which finally implies that $A_k,C_k,\|D_k\|^2$ are unconditionally independent. This completes the proof.
\end{proof}

\medskip

Now, we come to the proof of the important Lemma \ref{lemma:main_recursion}, which underlies the major results in this work.

\medskip

\begin{proof}[Proof of Lemma \ref{lemma:main_recursion}]
    Let $f(x)=(1+x)^{-1/2}$ for $x\geq 0$. By Taylor expansion,
    \begin{align*}
        f(x) &= f(0) + f'(0)x + f''(\xi)\dfrac{x^2}{2}
    \end{align*}where $0\leq \xi\leq x$. Now $f(0)=1, f'(0)=-1/2$ and $f''(\xi)=3(1+\xi)^{-5/2}/4$. Thus, $0\leq f''(\xi)\leq 1$. Consequently, starting with recursion \ref{eqn:recursion_ABC}, we get
    \begin{align}\label{eqn:rho_expand}
    \begin{split}
        \rho_k &= \left(\rho_{k-1}+\dfrac{\delta}{d}A_kB_k\right)f\left(\dfrac{\delta B_k^2}{d}\left(2 + \dfrac{\delta\|X_k\|^2}{d}\right)\right)\\
        &=\left(\rho_{k-1} + \dfrac{\delta}{d}A_kB_k\right)\left(1 - \dfrac{\delta B_k^2}{2d}\left(2 + \dfrac{\delta\|X_k\|^2}{d}\right)\right) + \dfrac{r_k^{(1)}}{d^2}
    \end{split}
    \end{align}where
    \begin{align}\label{eqn:rk}
        r_k^{(1)} &= \left(\rho_{k-1}+\dfrac{\delta}{d}A_kB_k\right)\times f''(\xi)\times \dfrac{1}{2}\left(\dfrac{\delta B_k^2}{2}\left(2 + \dfrac{\delta\|X_k\|^2}{d}\right)\right)^2
    \end{align}and so 
    \begin{align*}
        |r_k^{(1)}| &\leq C\left(1 + \dfrac{\delta}{d}|A_k||B_k|\right)\times B_k^4\times\left(4 + \dfrac{\delta^2\|X_k\|^4}{d^2}\right)
    \end{align*}Using Lemma \ref{lemma:Xk_components_indep} and the distributions of $A_k,B_k,X_k$, we get that there is a universal constant $C$ such that $\E|r_k^{(1)}|\leq C$ for all $k$.

We now simplify (\ref{eqn:recursion_ABC}) even further by isolating lower order terms. Let $r_k^{(2)}=\delta^2A_kB_k^3(2 + \|X_k\|^2/d)/2$. Also, we write $B_k=\rho_{k-1}A_k + \sqrt{1-\rho^2_{k-1}}C_k$, and plugging this in,
\begin{align*}
    \rho_k &= \rho_{k-1} + \dfrac{\delta}{d}\rho_{k-1}A_k^2 + \dfrac{\delta}{d}\sqrt{1-\rho_{k-1}^2}A_kC_k -\dfrac{\delta B_k^2}{2d}\left(2 + \dfrac{\delta\|X_k\|^2}{d}\right)\rho_{k-1} - r_k^{(2)} + r_k^{(1)}
\end{align*}
Let $r_k^{(3)}=\delta^2 B_k^2(A_k^2+C_k^2)\rho_{k-1}/2$. We thus further simplify
\begin{align*}
    \rho_k &= \rho_{k-1} + \dfrac{\delta}{d}\rho_{k-1}A_k^2 + \dfrac{\delta}{d}\sqrt{1-\rho_{k-1}^2}A_kC_k -\dfrac{\delta B_k^2}{2d}\left(2 + \dfrac{\|D_k\|^2}{d}\right)\rho_{k-1} -r_k^{(3)}- r_k^{(2)} + r_k^{(1)}
\end{align*}

Set $R_k=r_k^{(1)}-r_k^{(2)}-r_k^{(3)}$. Since $\E|r_k^{(i)}|^j\leq C_j$ for each $i\in\{1,2,3\}$ and each $k$, it follows that $\E|R_k|^j\leq C_j$ for a constant $C_j$, for any $k$.

Finally, we define $M_k$ by
\begin{align}\label{eqn:martingale_def}
\begin{split}
    M_k &= \delta\rho_{k-1}(A_k^2-(\theta^2+1))+\delta\sqrt{1-\rho^2_{k-1}}A_kC_k-\delta\rho_{k-1}(B_k^2-(\theta^2\rho_{k-1}^2+1))\\
    &-\delta^2\rho_{k-1}(B_k^2\|D_k\|^2/d-(\theta^2\rho_{k-1}^2+1))/2
\end{split}
\end{align}Clearly $M_k$ is $\gF_k$-measurable and one can check that $\E[M_k|\gF_{k-1}]=0$, using Lemma \ref{lemma:Xk_components_indep}. Thus $\{M_k\}_{k\geq 1}$ a martingale difference sequence. Further, from the expression (\ref{eqn:martingale_def}), it is clear that $M_k$ is a polynomial in the random variables $A_k,B_k,C_k,D_k^2/d$ which all have finite moments of all orders. This implies that each moment $\E|M_k|^j\leq C_j$ for all $k$. Some algebra yields the final expression, and the proof is complete. The unfolded result is a simple consequence.
\end{proof}



\medskip

Before proceeding further, it would be valuable to understand how large $\hat \rho_k$ can be. The following lemma provides the relevant bound.

\medskip

\begin{lemma}\label{lemma:uniform_bounds_rho}
    There is a universal constant $C>0$ such that $\E|\rho_k|^3\leq C\exp(3\alpha k/d)/d^{3/2}$.
\end{lemma}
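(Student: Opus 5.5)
Since $\alpha>0$ is the regime of interest, the plan is to work directly with the one-step recursion (\ref{eqn:recursion_final}) and to track the $L^3$ norm, or more conveniently the even moment $\E\rho_k^4$ or $\E\rho_k^6$, which we then convert via Lyapunov/Jensen. We rely on three facts. First, the cubic drift $-\beta\rho_{k-1}^3/d$ always pulls toward zero and so can only help. Second, the martingale term $M_k/d$ has conditional mean zero and bounded moments. Third, $R_k/d^2$ is negligible. Starting from $\hat v_0\sim Unif(\sS^{d-1})$, we have $\E\rho_0^{2p}\le C_p d^{-p}$, which gives the initial condition $\|\rho_0\|_{2p}\le C d^{-1/2}$.

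The natural approach is to prove, for $p=2$ (so that the cube follows from $\E|\rho_k|^3\le(\E\rho_k^4)^{3/4}$), a recursive inequality of the form
\begin{align*}
\E\rho_k^4 \le (1+\alpha/d)^4\E\rho_{k-1}^4 + \frac{C}{d^2}\E\rho_{k-1}^2 + \frac{C}{d^4}.
\end{align*}
To obtain it, write $\rho_k=a_{k-1}+\epsilon_k$, where $a_{k-1}=(1+\alpha/d)\rho_{k-1}-\beta\rho_{k-1}^3/d$ is $\gF_{k-1}$-measurable and $\epsilon_k=M_k/d+R_k/d^2$, and expand the fourth power. The term $a^4$ is at most $(1+\alpha/d)^4\rho_{k-1}^4$ because $|a|\le(1+\alpha/d)|\rho_{k-1}|$ when $|\rho|\le1$ and $d$ is large. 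The cross term $4a^3\epsilon$ loses its $M_k$ part by the martingale property, leaving $a^3R_k/d^2$. The terms $6a^2\epsilon^2$ are of order $\rho^2/d^2$. The higher-order terms are $O(d^{-3})$.

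The delicate point is the $R_k$ contribution: a crude bound gives $|\rho|^3/d^2$, which we control by $\rho^4/d+\rho^2/d^3$ via Young, and this only perturbs the growth rate to $1+O(1/d)$ times a constant. To keep the exact exponent $3\alpha k/d$ rather than $(3\alpha+C)k/d$, one needs the $R_k$ term to be genuinely smaller. Inspecting $r^{(i)}_k$ in the proof of Lemma~\ref{lemma:main_recursion}, every term except pieces of $r^{(2)}$ carries a factor $\rho_{k-1}$ or $A_kB_k$. So I would split $R_k=\rho_{k-1}R_k'+R_k''$ with $\E[R_k''\mid\gF_{k-1}]$ small, which lets the $R$ term fold into $(1+\alpha/d+C/d^2)$, whose $k$-th power is still $\le Ce^{\alpha k/d}$ for $k\lesssim d^2$. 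This split is the step I expect to be the main obstacle. If it fails, a fallback is to use that only $k=O(d\log d)$ matters, so extra factors like $(1+C/d^2)^k$ are harmless.

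The same scheme at $p=1$ gives $\E\rho_k^2\le Ce^{2\alpha k/d}/d$. Plugging this into the fourth-moment recursion and iterating with $c=(1+\alpha/d)^4\approx e^{4\alpha/d}$ yields
\begin{align*}
\E\rho_k^4\le C\frac{e^{4\alpha k/d}}{d^2}+\sum_{j\le k}c^{k-j}\frac{C e^{2\alpha j/d}}{d^3}\le C\frac{e^{4\alpha k/d}}{d^2},
\end{align*}
where the geometric sum is dominated by $d\cdot e^{4\alpha k/d}/d^3$. Lyapunov's inequality then gives $\E|\rho_k|^3\le C e^{3\alpha k/d}/d^{3/2}$. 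One could alternatively apply (\ref{eqn:recursion_rho_unfolded_final}) with $m=0$, dropping the cubic term by a sign argument and using Burkholder for the martingale sum, but the sign argument is awkward there, so the moment recursion is the cleaner route.
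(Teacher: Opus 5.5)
Your plan matches the paper's proof: a conditional second-moment recursion gives $\E\rho_k^2\le Ce^{2\alpha k/d}/d$ (for $\alpha>0$, which the paper's proof also tacitly assumes when it divides by $2\alpha$); this feeds a fourth-moment recursion, which is iterated and finished with Lyapunov's inequality. The step you flag as the main obstacle is not one: since $|\rho_{k-1}|\le 1$, the surviving cross term satisfies $4|a_{k-1}|^3\,|\E[R_k\mid\gF_{k-1}]|/d^2\le C|\rho_{k-1}|^3/d^2\le C\rho_{k-1}^2/d^2$, so it is absorbed into the $\frac{C}{d^2}\E\rho_{k-1}^2$ term your recursion already contains (this is how the paper gets $\E[\rho_k^4\mid\gF_{k-1}]\le\rho_{k-1}^4(1+4\alpha/d)+C\rho_{k-1}^2/d^2+C/d^3$), and the Young's-inequality detour, the splitting of $R_k$, and any restriction on $k$ are unnecessary.
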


\medskip

\begin{proof}[Proof of Lemma \ref{lemma:uniform_bounds_rho}] We start with the recursion given in Lemma \ref{lemma:main_recursion}. Squaring both sides, taking conditional expectation given $\gF_{k-1}$ and bounding terms of order lower than $1/d$, we get
\begin{align*}
    \E[\rho_k^2|\gF_{k-1}] &\leq \rho_{k-1}^2 + 2\rho_{k-1}^2(\alpha-\beta\rho^2_{k-1})/d + C/d^2\\
    &\leq \rho^2_{k-1} + 2\alpha\rho^2_{k-1}/d + C/d^2
\end{align*}for a universal constant $C>0$. Thus, setting $\mu_{2,k}=\E[\rho^2_k]$, we get
\begin{align*}
    \mu_{2,k}&\leq \mu_{2,k-1}(1 + 2\alpha/d) + C/d^2
\end{align*}Iterating this and using $\mu_{2,0}=1/d$, we get
\begin{align*}
    \mu_{2,k} &\leq \mu_{2,0}(1+2\alpha/d)^k + \dfrac{C}{d^2}\sum_{l=0}^{k-1} (1+2\alpha/d)^l \\
    &\leq \dfrac{1}{d}\exp\left(2\alpha k/d\right) + \dfrac{C}{2\alpha d}\exp(2\alpha k/d)\\
    &\leq \dfrac{C}{d}\exp(2\alpha k/d)
\end{align*}Next, raising to the fourth power the recursion in Lemma \ref{lemma:main_recursion}, 
\begin{align*}
    \E[\rho^4_k|\gF_{k-1}] &\leq \rho^4_{k-1}(1 + 4\alpha/d) + C\rho^2_{k-1}/d^2 + C/d^3
\end{align*}Writing $\mu_{4,k}=\E[\rho^4_k]$, we then get
\begin{align*}
    \mu_{4,k} &\leq \mu_{4,k-1}(1+4\alpha/d) + C\mu_{2,k-1}/d^2 + C/d^3
\end{align*}We iterate this all the way to $k=0$. Note that $\mu_{4,0}=\E[\langle Z,v\rangle^4/\|Z\|^4]=\E[Beta^2(1,d-1)]$, where $Z\sim \gN(0, I_d)$. Recall that for $a,b>0$, $\E[Beta^2(a,b)]=a(a+1)/(a+b)(a+b+1)$. Plugging in $a=1,b=d-1$, we get $\mu_{4,0}=2/d(d+1)\leq 2/d^2$. Using this and also using the previous estimate $\mu_{2,k}\leq C\exp(2\alpha k/d)/d$, we get
\begin{align*}
    \mu_{4,k} &\leq \mu_{4,0}\left(1 + 4\alpha/d\right)^k + \dfrac{C}{d^2}\sum_{l=0}^{k-1}\left(1 + 4\alpha/d\right)^l\mu_{2,k-1-l} + \dfrac{C}{d^3}\sum_{l=0}^{k-1}(1+4\alpha/d)^l \\
    &\leq \dfrac{2}{d^2}\exp(4\alpha k/d) + \dfrac{C}{d^3}\sum_{l=0}^{k-1}\exp(4\alpha l/d)\exp(2\alpha(k-1-l)/d) + \dfrac{C}{d^2}\exp(4\alpha k/d)\\
    &\leq \dfrac{C}{d^2}\exp(4\alpha k/d) + \dfrac{C\exp(2\alpha(k-1)/d)}{d^2}\sum_{l=0}^{k-1}\exp(2\alpha l/d)\\
    &\leq \dfrac{C}{d^2}\exp(4\alpha k/d) + \dfrac{C\exp(2\alpha(k-1)/d)}{d^3}\dfrac{\exp(2\alpha k/d)}{\exp(2\alpha/d)-1}\\
    &\leq \dfrac{C}{d^2}\exp(4\alpha k/d) + \dfrac{C}{d^2}\exp(4\alpha k/d)
\end{align*}where in the last step we used the inequality $\exp(2\alpha/d)-1\geq 2\alpha/d$ in the denominator. A standard application of Lyapounov's inequality then concludes the proof: $\E|\rho_k|^3 \leq (\E[\rho_k^4])^{3/4}\leq C\exp(3\alpha k/d)/d^{3/2}$.
\end{proof}

Lemma \ref{lemma:uniform_bounds_rho} immediately implies Theorem \ref{thm:alpha0}.

\begin{proof}[Proof of Theorem \ref{thm:alpha0}]
    Consider the bound $\E|\rho_n|^3\leq C\exp(3\alpha n/d)/d^{3/2}$ from Lemma \ref{lemma:uniform_bounds_rho}. When $\alpha<0$, the upper bound $\exp(3\alpha n/d)/d^{3/2}\leq 1/d^{3/2}\to 0$ as $d\to\infty$. This holds irrespective of $n$. The proof is complete.
\end{proof}

An additional lemma will be helpful in simplifying the asymptotic distribution of the martingale sum $\sum_{k=1}^n c_d^{n-k}M_k/d$ in \ref{eqn:recursion_rho_unfolded_final}. Looking at the expression for $M_k$ in \eqref{eqn:martingale_def}, we observe that the dominant term is $\tilde M_k:=\delta A_kC_k$, since the rest of the terms are multiplied by $\rho_k$ which is typically small. The following lemma makes this intuition precise.

\begin{lemma}\label{lemma:martingale_simplified}
    Recall the martingale difference $M_k$ defined in (\ref{eqn:martingale_def}). Let $\tilde M_k=\delta A_kC_k$. Then, if $\limsup_{d\to\infty} n/d\log d\leq \gamma_*$,
    \begin{align*}
        \dfrac{1}{d}\sum_{k=1}^n c_d^{n-k}(M_k-\tilde M_k)\stackrel{p}{\to}0
    \end{align*}
\end{lemma}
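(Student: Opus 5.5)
The difference $M_k-\tilde M_k$ is itself a martingale difference sequence, since both $M_k$ and $\tilde M_k=\delta A_kC_k$ have conditional mean zero given $\gF_{k-1}$ by Lemma \ref{lemma:Xk_components_indep}. So the plan is an $L^2$ bound on the weighted martingale sum. Orthogonality of martingale increments gives
\begin{align*}
\E\Big(\dfrac{1}{d}\sum_{k=1}^n c_d^{n-k}(M_k-\tilde M_k)\Big)^2 = \dfrac{1}{d^2}\sum_{k=1}^n c_d^{2(n-k)}\E[(M_k-\tilde M_k)^2].
\end{align*}
It then suffices to show this is $o(1)$ whenever $\limsup n/d\log d\le\gamma_*$, and Chebyshev finishes the argument.

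The key step is to bound $\E[(M_k-\tilde M_k)^2|\gF_{k-1}]$ by $C\rho_{k-1}^2$. From (\ref{eqn:martingale_def}), every term of $M_k$ other than $\delta\sqrt{1-\rho_{k-1}^2}A_kC_k$ carries an explicit factor $\rho_{k-1}$. Each of these is multiplied by a polynomial in $A_k,C_k,B_k,\|D_k\|^2/d$ whose conditional second moment is bounded uniformly, using $|\rho_{k-1}|\le1$ and Lemma \ref{lemma:Xk_components_indep}.

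The remaining piece is $\delta(\sqrt{1-\rho_{k-1}^2}-1)A_kC_k$. Since $1-\sqrt{1-x^2}\le x^2\le |x|$, this piece is also bounded by $C\rho_{k-1}^2$ in conditional second moment. Hence $\E[(M_k-\tilde M_k)^2]\le C\,\E\rho_{k-1}^2\le C e^{2\alpha k/d}/d$, by the second-moment estimate $\mu_{2,k}\le Ce^{2\alpha k/d}/d$ derived in the proof of Lemma \ref{lemma:uniform_bounds_rho}.

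Plugging this in and using $c_d^{2(n-k)}\le e^{2\alpha(n-k)/d}$, the variance is at most
\begin{align*}
\dfrac{C}{d^3}\sum_{k=1}^n e^{2\alpha(n-k)/d}e^{2\alpha k/d} = \dfrac{C\,n\,e^{2\alpha n/d}}{d^3}.
\end{align*}
This is the main obstacle: at criticality $e^{2\alpha n/d}\approx d^{2\alpha\gamma}$, and $2\alpha\gamma_*=1$, so we get roughly $n\,d/d^3=\log d/d\to0$. It works, but only barely.

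For a general $\limsup n/d\log d\le\gamma_*$ the argument needs care. If $n\le\gamma_*d\log d+o(d)$ the bound holds directly. If the $\limsup$ statement allows $n=\gamma_* d\log d+\eta d$ with $\eta$ bounded, the bound is $O(e^{2\alpha\eta}\log d/d)\to0$ all the same. Indeed, any $n$ with $2\alpha n/d\le \log d+o(\log d)$ in the sense of $e^{2\alpha n/d}=o(d^2/(n))$ suffices, and I would state precisely this condition.

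If an extra factor of $\log d$ arises, one can refine by using the sharper bound $\E\rho_{k-1}^2\le C\min(1,e^{2\alpha k/d}/d)$. The trivial bound $\rho^2\le1$ handles late times, so the sum over $k$ splits at $k_0=\frac{d}{2\alpha}\log d$. The late part then contributes at most $C\sum_{k>k_0}c_d^{2(n-k)}/d^2\le C e^{2\alpha(n-k_0)/d}/d$, which vanishes when $n-k_0=o(d\log d)$ in the appropriate sense.
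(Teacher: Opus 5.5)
Your core argument is the paper's argument. You use an $L^2$ bound on the weighted sum of the martingale differences $S_k=M_k-\tilde M_k$ and the conditional bound $\E[S_k^2\mid\gF_{k-1}]\le C\rho_{k-1}^2$, with $1-\sqrt{1-\rho^2}\le\rho^2$ for the $A_kC_k$ piece. You also use the second-moment estimate $\E\rho_{k-1}^2\le Ce^{2\alpha k/d}/d$ from the proof of Lemma \ref{lemma:uniform_bounds_rho}, and you reach the same final bound $Cne^{2\alpha n/d}/d^3$. The problem is how you finish.

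You say the bound works ``only barely'' and that the $\limsup$ hypothesis needs care. You then treat only $n\le\gamma_*d\log d+o(d)$ and $n=\gamma_*d\log d+\eta d$ with bounded $\eta$, and propose to replace the hypothesis by $e^{2\alpha n/d}=o(d^2/n)$. That condition is off by a factor of $d$: the bound needs $ne^{2\alpha n/d}=o(d^3)$. As written, your condition already fails at $n=\gamma_*d\log d$, where $e^{2\alpha n/d}=d$ while $d^2/n\asymp d/\log d$, which contradicts your own computation a few lines earlier. (Your phrase $2\alpha n/d\le\log d+o(\log d)$ is exactly the hypothesis and does suffice; only the ``in the sense of'' reformulation is wrong.)

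In fact the bound has polynomial slack, and the paper handles the $\limsup$ hypothesis in one line. Fix $\epsilon\in(0,1)$; for all large $d$ we have $n\le(1+\epsilon)\gamma_*d\log d$. Since $2\alpha\gamma_*=1$, this gives $e^{2\alpha n/d}\le d^{1+\epsilon}$, so the variance is at most $C\log d/d^{1-\epsilon}\to0$. This covers cases such as $n=\gamma_*d\log d+d\sqrt{\log d}$, which fall under neither of your sub-cases. More generally, the bound vanishes whenever $\limsup n/(d\log d)<2\gamma_*$. So the hypothesis needs no restriction, and the late-time splitting in your last paragraph is unnecessary. Replace your final two paragraphs with this $\epsilon$-argument and the proof is complete and coincides with the paper's.
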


\begin{proof}[Proof of Lemma \ref{lemma:martingale_simplified}]
    Write $S_k=M_k-\tilde M_k$. Note that $S_k$ are martingale differences adapted to the same filtration $\gF_k$. Thus, $\E[S_k|\gF_{k-1}]=0$ for each $k$.

    Next, observe that the exact form of $S_k$ is
    \begin{align*}
     \begin{split}
    S_k &= \delta\left(\sqrt{1-\rho^2_{k-1}}-1\right)A_kC_k+\delta\rho_{k-1}(A_k^2-(\theta^2+1))-\delta\rho_{k-1}(B_k^2-(\theta^2\rho_{k-1}^2+1))\\
    &-\delta^2\rho_{k-1}(B_k^2D_k^2-(\theta^2\rho_{k-1}^2+1))/2\\
\end{split}
    \end{align*}
    Note that $|1-\sqrt{1-\rho^2_{k-1}}|\leq \rho^2_{k-1}$, so that $\delta|1-\sqrt{1-\rho^2_{k-1}}||A_kC_k|\leq \delta\rho_{k-1}^2|A_kC_k|$. The other terms all contribute a factor of $\rho_{k-1}$. Thus, in totality, we have
    \begin{align}\label{ineq:Sk_bound}
        |S_k| &\leq \rho_{k-1}\times(\text{terms with bounded moments})
    \end{align}
   In fact, for any $j$, it is easy to see that we have $\E|S_k|^j\leq C_j|\rho_{k-1}|^j$ for a universal constant $C_j$. This implies, for each $k$, $\E(S_k^2|\gF_{k-1})\leq C\rho^2_{k-1}$. From the proof of Lemma \ref{lemma:uniform_bounds_rho}, it follows that $\E(\rho_k^2)\leq C\exp(2\alpha k/d)/d$. Using these facts and also that $c_d\leq \exp(\alpha/d)$,
    \begin{align*}
        \E\left(\dfrac{1}{d}\sum_{k=1}^nc_d^{n-k}S_k\right)^2 &\leq \dfrac{1}{d^3}\sum_{k=1}^n \exp(2\alpha (n-k)/d)\times \exp(2\alpha k/d)\\
        &\leq \dfrac{Cn\exp(2\alpha n/d)}{d^3}
    \end{align*}Since $\limsup n/d\log d\leq \gamma_*$, given $\epsilon\in(0,1)$ there exists a positive integer $d_0$ such that for all $d\geq d_0$, we have $n\leq (1+\epsilon)\gamma_*d\log d$. Thus, for all $d\geq d_0$, the upper bound can be further bounded as
    \begin{align*}
        \dfrac{Cn\exp(2\alpha n/d)}{d^3}\leq \dfrac{C(d\log d)\exp((1+\epsilon)\log d)}{d^3}\leq \dfrac{C\log d}{d^{1-\epsilon}}\to 0
    \end{align*}This implies that $\sum_{k=1}^n c_d^{n-k}S_k/d\stackrel{p}{\to}0$.
\end{proof}

\medskip

Lemma \ref{lemma:martingale_simplified} allows us to ``replace" the martingale difference $M_k$ by the simplified martingale difference $\tilde M_k$, which are actually iid and independent of $\rho_0$! Hence, for all future purposes, we can rewrite recursion \ref{eqn:recursion_final} as
\begin{align}\label{eqn:recursion_final_simplified}
    \rho_k &= (1+\alpha/d)\rho_{k-1} - \beta\rho^3_{n-1}/d + \tilde M_k/d + S_k/d + R_k/d^2
\end{align}and recursion \ref{eqn:recursion_rho_unfolded_final} as
\begin{align}\label{eqn:recursion_unfolded_final_simplified}
    \rho_k &= c_d^k\rho_0 - \dfrac{\beta}{d}\sum_{k=1}^n c_d^{n-k}\rho^3_{k-1} + \dfrac{1}{d}\sum_{k=1}^n c_d^{n-k}\tilde M_k + \dfrac{1}{d}\sum_{k=1}^n c_d^{n-k}S_k + \dfrac{1}{d^2}\sum_{k=1}^n c_d^{n-k}R_k
\end{align}

\medskip

\begin{proof}[Proof of Theorem \ref{thm:subcritical}] 
We start with the form of the recursion in \ref{eqn:recursion_unfolded_final_simplified}. Recall that $\sqrt{d}\rho_0\stackrel{w}{\to}\gN(0,1)$ as $d\to\infty$, and hence $\sqrt{d}\rho_0$ is stochastically bounded. Now
\begin{align*}
    \dfrac{c_d^n}{\sqrt{d}}\leq \dfrac{\exp(\alpha n/d)}{d^{1/2}}
\end{align*}and since $\lim_{d\to\infty} n/d\log(d)=\gamma<\gamma_*=1/2\alpha$, we get
\begin{align*}
    \limsup_{d\to\infty}\dfrac{c_d^n}{d^{1/2}}\leq \exp\left(\alpha \limsup_{d\to\infty}\log(d)\left(\gamma-\gamma_*\right)\right)=0
\end{align*}This implies that the first term $c_d^n\rho_0=(c_d^n/\sqrt{d})\times \sqrt{d}\rho_0\stackrel{p}{\to}0$.

Using Lemma \ref{lemma:uniform_bounds_rho}, we can bound
\begin{align*}
    \E\left|\dfrac{1}{d}\sum_{k=1}^n c_d^{n-k}\rho_{k-1}^3\right|&\leq \dfrac{1}{d}\sum_{k=1}^n c_d^{n-k}\E|\rho_{k-1}|^3\\
    &\leq \dfrac{C}{d^{5/2}}\sum_{k=1}^n\exp(\alpha(n-k)/d)\times \exp(3\alpha k/d)\\
    &=\dfrac{C\exp(\alpha (n+2)/d)}{d^{5/2}}\sum_{k=0}^{n-1}\exp(2\alpha k/d)\\
    &\leq \dfrac{C\exp(\alpha(n+2)/d)}{d^{5/2}}\times \dfrac{\exp(2\alpha n/d)}{\exp(2\alpha/d)-1}\\
    &\leq \dfrac{C\exp(\alpha n/d)}{d^{3/2}}\times \exp(2\alpha n/d)\\
    &\leq \dfrac{\exp(3\alpha n/d)}{d^{3/2}}
\end{align*}Since $\exp(\alpha n/d)/d^{1/2}\to0$, it follows that $\exp(3\alpha n/d)/d^{3/2}\to 0$ as well, and hence
\begin{align*}
\dfrac{1}{d}\sum_{k=1}^n c_d^{n-k}\rho_{k-1}^3\stackrel{p}{\to}0
\end{align*}
Now we study the third term. Recall that $\tilde M_k$ are iid terms of the form $\tilde M_k=\delta A_kC_k$ where $A_k\sim\gN(0,\theta^2+1)$ and $C_k\sim \gN(0,1)$ independent of each other. Thus, 
\begin{align*}
    \E\left(\dfrac{1}{d}\sum_{k=1}^n c_d^{n-k}\tilde M_k\right)^2 &= \dfrac{\delta^2(\theta^2+1)}{d^2}\sum_{k=0}^{n-1} c_d^{2k}\\
    &\leq \dfrac{C\exp(2\alpha n/d)}{d}\to 0
\end{align*}following the logic in the preceding paragraph. This implies that
\begin{align*}
    \dfrac{1}{d}\sum_{k=1}^n c_d^{n-k}\tilde M_k\stackrel{p}{\to}0
\end{align*}

The fourth term $\sum_{k=1}^n c_d^{n-k}S_k/d$ has been already shown to be asymptotically small in Lemma \ref{lemma:martingale_simplified}. Finally, we tackle the last term. Since $\E|R_k|\leq C$ for all $k$, we get
\begin{align*}
    \E\left|\dfrac{1}{d^2}\sum_{k=1}^n c_d^{n-k}R_k\right| \leq \dfrac{C}{d^2}\sum_{k=0}^{n-1}c_d^k\leq \dfrac{C\exp(\alpha n/d)}{d}\to 0
\end{align*}since $\exp(\alpha n/d)/d^{1/2}\to 0$ and hence 
\begin{align*}
\dfrac{1}{d^2}\sum_{k=1}^n c_d^{n-k}R_k\stackrel{p}{\to}0    
\end{align*}


Thus, each term on the right side of recursion \ref{eqn:recursion_rho_unfolded_final} converges to $0$ in probability, thereby completing the proof.
\end{proof}

\medskip

While Theorem \ref{thm:subcritical} establishes non-recovery below $\gamma_*$, it turns out that the conclusion extends to $n=[\gamma_*d\log(d)-t_dd]$ for a slowly increasing sequence $t_d$ that still goes to $\infty$ but at a rate slower than $\log(d)$. Moreover, we can precisely identify the precise behavior of $\rho_n$ for this value of $n$. Correctly scaled, it has Gaussian fluctuations. This is documented in Lemma \ref{lemma:exact_behavior_rho_subcritical}.

\medskip

\begin{lemma}\label{lemma:exact_behavior_rho_subcritical}
    Let $n=[\gamma_*d\log(d)-t_dd]$ where $t_d\to\infty,t_d/\log(d)\to0$ as $d\to\infty$. Then,
    \begin{align*}
        \exp(\alpha t_d)\rho_n\stackrel{d}{\to}\gN(0, 1+\gamma_*\delta^2(\theta^2+1))
    \end{align*}
\end{lemma}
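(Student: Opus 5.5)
The plan is to multiply the unfolded recursion (\ref{eqn:recursion_unfolded_final_simplified}) by $\exp(\alpha t_d)$ and analyze the five terms separately. The first two terms (initialization and martingale) will carry the Gaussian limit, and the other three will vanish.

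\textbf{The key scale.} Write $n/d=\gamma_*\log d-t_d+O(1/d)$. Since $\log c_d=\alpha/d-O(1/d^2)$,
\begin{align*}
c_d^n=\exp\bigl(\alpha n/d-O(n/d^2)\bigr)=d^{1/2}\exp(-\alpha t_d)(1+o(1)),
\end{align*}
because $\alpha\gamma_*=1/2$ and $n/d^2=O(\log d/d)\to 0$.

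\textbf{Initialization term.} From the scale computation, $\exp(\alpha t_d)c_d^n\rho_0=(1+o(1))\sqrt d\rho_0$, which converges weakly to $\gN(0,1)$.

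\textbf{Martingale term.} Set $T_d=\exp(\alpha t_d)d^{-1}\sum_{k=1}^n c_d^{n-k}\tilde M_k$. Its variance is
\begin{align*}
\delta^2(\theta^2+1)\exp(2\alpha t_d)d^{-2}\sum_{j=0}^{n-1}c_d^{2j}=\delta^2(\theta^2+1)\exp(2\alpha t_d)d^{-2}\,\frac{c_d^{2n}-1}{c_d^2-1}.
\end{align*}
Using $c_d^{2n}\sim d\exp(-2\alpha t_d)$ and $c_d^2-1\sim 2\alpha/d$, this tends to $\delta^2(\theta^2+1)/(2\alpha)=\gamma_*\delta^2(\theta^2+1)$.

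\textbf{Negligible terms.} These use the bounds already established in the paper.
\begin{itemize}
\item Cubic term. The computation in the proof of Theorem \ref{thm:subcritical} (via Lemma \ref{lemma:uniform_bounds_rho}) bounds its $L^1$ norm by $C\exp(\alpha t_d)\exp(3\alpha n/d)/d^{3/2}=C\exp(-2\alpha t_d)\to0$.
\item Term $\sum c_d^{n-k}S_k/d$. The second-moment bound in the proof of Lemma \ref{lemma:martingale_simplified}, multiplied by $\exp(2\alpha t_d)$, gives $Cn\exp(2\alpha n/d)\exp(2\alpha t_d)/d^3=Cn\,d/d^3=O(\log d/d)\to0$. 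This is valid since $\limsup n/d\log d=\gamma_*$.
\item Term with $R_k$. Its $L^1$ norm is at most $C\exp(\alpha t_d)\exp(\alpha n/d)/d=O(d^{-1/2})$.
\end{itemize}
By Slutsky's theorem, it then suffices to show that $\sqrt d\rho_0+T_d$ converges weakly to $\gN(0,1+\gamma_*\delta^2(\theta^2+1))$.

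\textbf{Main obstacle: joint convergence.} The difficulty is that $\rho_0$ and $T_d$ must be handled jointly, since $C_k=\langle X_k,e_{k-1}\rangle$ depends on the trajectory. By Lemma \ref{lemma:Xk_components_indep}, $(A_k,C_k)$ is independent of $\gF_{k-1}$ and has a fixed law. Hence the $\tilde M_k$ are i.i.d.\ and independent of $\gF_0\ni\rho_0$. The plan is to condition on $\gF_{k-1}$ successively and factorize the characteristic function:
\begin{align*}
\E e^{i u(\sqrt d\rho_0+T_d)}=\E e^{iu\sqrt d\rho_0}\prod_{k=1}^n\phi\bigl(u\exp(\alpha t_d)c_d^{n-k}/d\bigr),
\end{align*}
where $\phi$ is the characteristic function of $\tilde M_1$. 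The product converges to $\exp\bigl(-u^2\gamma_*\delta^2(\theta^2+1)/2\bigr)$ by a Lindeberg argument for weighted i.i.d.\ sums. The Lindeberg condition holds because the maximal squared weight, $\exp(2\alpha t_d)c_d^{2n}/d^2\asymp 1/d$, is negligible against the total variance, which is of order $1$, and $\tilde M_1$ has finite moments of all orders. Multiplying by the $\gN(0,1)$ characteristic function of the limit of $\sqrt d\rho_0$ yields the claimed variance $1+\gamma_*\delta^2(\theta^2+1)$.
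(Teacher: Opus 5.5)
Your proposal is correct and follows the paper's proof essentially step for step. It uses the same five-term decomposition of the unfolded recursion multiplied by $\exp(\alpha t_d)$, the same negligibility bounds for the cubic, $S_k$ and $R_k$ terms, the same scale computation $c_d^n\sim d^{1/2}e^{-\alpha t_d}$, and a CLT for the weighted i.i.d.\ sum (Lindeberg where the paper uses Lyapunov). The one place you are more careful is joint convergence: the paper simply asserts that the initialization term and the noise sum are independent, whereas your successive-conditioning factorization of the characteristic function, via Lemma \ref{lemma:Xk_components_indep}, actually proves it.
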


\medskip

\begin{proof}[Proof of Lemma \ref{lemma:exact_behavior_rho_subcritical}]
    We start with recursion \ref{eqn:recursion_unfolded_final_simplified}, and multiply both sides by $\exp(\alpha t_d)$. Then, following the proof of Theorem \ref{thm:subcritical},
    \begin{align*}
        \E\left|\dfrac{\exp(\alpha t_d)}{d}\sum_{k=1}^n c_d^{n-k}\rho^3_{k-1}\right|&\leq \dfrac{C\exp(\alpha t_d+3\alpha n/d)}{d^{3/2}}\\
        &\leq \dfrac{C\exp(\alpha t_d + 3\log(d)/2-3\alpha t_d)}{d^{3/2}}\\
        &=C\exp(-2\alpha t_d)\to 0
    \end{align*}This implies that 
    \begin{align*}
        \dfrac{\exp(\alpha t_d)}{d}\sum_{k=1}^n c_d^{n-k}\rho^3_{k-1}\stackrel{p}{\to}0
    \end{align*}
    Hence, even after scaling up by $\exp(\alpha t_d)$, the ``cubic" term involving $\rho_k^3$ (for $k\leq n-1$) is asymptotically negligible. Similarly, again from the proof of Theorem \ref{thm:subcritical},
    \begin{align*}
        \E\left|\dfrac{\exp(\alpha t_d)}{d^2}\sum_{k=1}^n c_d^{n-k}R_k\right| &\leq \dfrac{C\exp(\alpha t_d+\log(d)/2-\alpha t_d)}{d}= \dfrac{C}{\sqrt{d}}\to 0
    \end{align*}and thus $$\exp(\alpha t_d)\sum_{k=1}^n c_d^{n-k}R_k/d^2\stackrel{p}{\to}0$$
    Finally, we show that
    \begin{align*}
        \dfrac{\exp(\alpha t_d)}{d}\sum_{k=1}^n c_d^{n-k}S_k\stackrel{p}{\to}0
    \end{align*}To see this, we follow the proof of Lemma \ref{lemma:martingale_simplified}, and obtain
    \begin{align*}
        \E\left(\dfrac{\exp(\alpha t_d)}{d}\sum_{k=1}^n c_d^{n-k}S_k\right)^2 &= \dfrac{\exp(2\alpha t_d)}{d^2}\sum_{k=1}^n c_d^{2(n-k)}\E(S_k^2)\\
        &\leq \dfrac{C\exp(2\alpha t_d)}{d^3}\sum_{k=1}^n \exp(2\alpha(n-k)/d)\times \exp(2\alpha k/d)\\
        &\leq \dfrac{Cn\exp(2\alpha n/d)}{d^3}
    \end{align*}Since $n\leq \gamma_*d\log d$, it follows that the right side is further bounded by $C\log(d)/d\to 0$ and hence we get the desired result.

    Therefore, we are left with the ``initial" term $\exp(\alpha t_d)c_d^n\rho_0$ and the ``independent sum" term $\exp(\alpha t_d)\sum_{k=1}^n c_d^{n-k}M_k/d$. We will show that they converge to independent $N(0,1)$ and $N(0, \gamma_*(\theta^2+1)\delta^2)$. Together with the smallness of the two other terms, an application of Slutsky's lemma then implies the desired result.

    Note that
    \begin{align*}
    n\log(1+\alpha/d)+\alpha t_d-\log(d)/{2} &= n(\alpha/d + O(1/d^2)) +\alpha t_d -\log(d)/2)\\
    &=O(n/d^2)=O(\log d/d)\to 0
    \end{align*}This implies that 
    \begin{align*}
        \dfrac{\exp(\alpha t_d)\times c_d^n}{d^{1/2}}= \exp\left(n\log(1+\alpha/d)+\alpha t_d-\log(d)/{2}\right)\to 1
    \end{align*}Further, we already know that $d^{1/2}\rho_0\stackrel{d}{\to}\gN(0,1)$. Thus, we get $\exp(\alpha t_d)c_d^n\rho_0\stackrel{d}{\to}\gN(0,1)$.

    Finally, we apply the Lyapounov central limit theorem to show that
\begin{align*}
    \dfrac{\exp(\alpha t_d)}{d}\sum_{k=1}^n c_d^{n-k}\tilde M_k\stackrel{d}{\to}\gN(0,\gamma_*(\theta^2+1)\delta^2)
\end{align*} Observe that the terms $\tilde M_k=\delta A_kC_k$ are iid with mean $0$, variance $\delta^2(\theta^2+1)$ and bounded moments of all orders. Consequently, the variance of this sum yields
\begin{align*}
    \E\left(\dfrac{\exp(\alpha t_d)}{d}\sum_{k=1}^nc_d^{n-k}\tilde M_k\right)^2 &= \dfrac{\exp(2\alpha t_d)\delta^2(\theta^2+1)}{d^2}\sum_{k=0}^n c_d^{2k}\\
    &=\dfrac{\delta^2(\theta^2+1)\exp(2\alpha t_d)(\exp(2\alpha n/d)-1)}{2\alpha}
\end{align*}Now, using that $\gamma_*d\log d-t_d d-1\leq n\leq \gamma_*d\log d-t_d d$, it follows that
\begin{align*}
    \log d - 2\alpha t_d - 2\alpha/d \leq 2\alpha n/d\leq \log d - 2\alpha t_d
\end{align*}which finally implies that
\begin{align*}
    \dfrac{\exp(2\alpha t_d + 2\alpha n/d) }{d}\to 1
\end{align*}and hence, 
\begin{align*}
    \E\left(\dfrac{\exp(\alpha t_d)}{d}\sum_{k=1}^nc_d^{n-k}\tilde M_k\right)^2 \to \gamma_*\delta^2(\theta^2+1)
\end{align*} The last step is to verify the Lyapounov condition is satisfied, that is, we need to show that
\begin{align*}
    \dfrac{\exp(3\alpha t_d)}{d^3}\sum_{k=1}^nc_d^{3(n-k)}\E|\tilde M_k|^3\to 0
\end{align*}But each $\E|\tilde M_k|^3\leq C$, so that
\begin{align*}
    \dfrac{\exp(3\alpha t_d)}{d^3}\sum_{k=1}^nc_d^{3(n-k)}\E|\tilde M_k|^3 &\leq \dfrac{C\exp(3\alpha t_d)\times \exp(3\alpha n/d)}{d^2}\\
    &\leq \dfrac{C\exp(3\alpha \gamma_*\log d)}{d^2}\\
    &=\dfrac{Cd^{3/2}}{d^2}\to 0
\end{align*}and the proof is complete.\end{proof}

\medskip

Thus, at $n=n_-=[\gamma_*d\log d - t_d d]$, $\rho_n$ still vanishes but not too fast; the rate at which it decays to $0$ as $d\to\infty$ is precisely $\exp(-\alpha t_d)$ which can be quite slow if $t_d\to\infty$ slowly enough. The fact that the algorithm now has reached an overlap level that is significantly better than the $O(1/\sqrt{d})$ order afforded by random initialization, will turn out to be quite important for what follows.

\medskip

Before proceeding, we recall a discrete Gronwall's lemma (stated in equation (5.1) in \citet{arous2021online}). We also provide the proof for convenience.

\medskip

\begin{lemma}\label{lemma:gronwall}
    Let $\{m_k\}_k$ be a sequence of reals such that for $a,b.\geq 0$, $$m_k\leq a + b\sum_{l=0}^{k-1}m_l$$ for all $k$. Then, $m_k\leq a\exp(bk)$ for all $k$.
\end{lemma}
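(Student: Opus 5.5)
We prove the discrete Gronwall bound by strong induction on $k$, with the stronger intermediate claim $m_k\leq a(1+b)^k$. Since $1+b\leq e^b$, this immediately gives $m_k\leq a\exp(bk)$. The hypotheses $a,b\geq 0$ are exactly what the induction needs: they make each step monotone.

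\textbf{Base case.} For $k=0$ the sum is empty, so the hypothesis reads $m_0\leq a = a(1+b)^0$.

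\textbf{Inductive step.} Suppose $m_l\leq a(1+b)^l$ for all $l<k$. Because $b\geq 0$, we may replace each $m_l$ inside the sum by its upper bound without reversing the inequality. This gives
\begin{align*}
m_k \leq a + b\sum_{l=0}^{k-1} a(1+b)^l.
\end{align*}
If $b=0$ the right side is just $a$. If $b>0$, the geometric sum collapses:
\begin{align*}
a + ab\cdot\frac{(1+b)^k-1}{b} = a(1+b)^k.
\end{align*}
Either way $m_k\leq a(1+b)^k$, which closes the induction.

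An equivalent route avoids the induction hypothesis on individual terms. Define $U_k=a+b\sum_{l<k}m_l$. Then $m_k\leq U_k$, and
\begin{align*}
U_{k+1}=U_k+bm_k\leq (1+b)U_k,
\end{align*}
with $U_0=a$. Iterating gives $m_k\leq U_k\leq a(1+b)^k$.

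\textbf{Where care is needed.} The only delicate point is that the $m_k$ are not assumed nonnegative. Negative terms cause no harm, because the argument only ever replaces $m_l$ by a larger quantity, and multiplying by $b\geq0$ preserves that direction. So no step is a real obstacle.
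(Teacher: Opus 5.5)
Your proof is correct and is essentially the paper's strong-induction argument. The paper inducts directly on the bound $a\exp(bl)$ and uses $b\le \exp(b)-1$ to close the step, while you induct on the sharper bound $a(1+b)^k$, where the geometric sum collapses exactly, and use $1+b\le\exp(b)$ (together with $a\ge 0$) only at the end; a small bonus is that your version handles $b=0$ cleanly, whereas the paper's expression $b(\exp(bk)-1)/(\exp(b)-1)$ is undefined there.
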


\begin{proof}[Proof of Lemma \ref{lemma:gronwall}]
    We proceed by strong induction. The statement is clearly true when $k=0$. Suppose it is true for all $l\leq k-1$, so that $m_l\leq a\exp(bl)$ for all $l\leq k-1$. Then, 
    \begin{align*}
        m_k &\leq a + b\sum_{l=0}^{k-1}m_l\\
        &\leq a\left(1 + b\sum_{l=0}^{k-1}\exp(bl)\right)\\
        &= a\left(1 + \dfrac{b(\exp(bk)-1)}{\exp(b)-1}\right)
    \end{align*}We next use that $b\leq \exp(b)-1$, and thus the induction step is proven.
\end{proof}

\medskip

We now show, in the next couple lemmas, that once $n\geq n_-$, the trajectory of $\rho_n$ follows a deterministic differential equation. This equation is the famous logistic ODE \citep{robinson2004introduction} that comes up in a classroom discussion of pitchfork bifurcation. We note that this ODE also comes up in the work of \citet{li2017diffusion}. This enables us to precisely calculate the limiting overlap on linear timescale $n=[\gamma_*d\log d+\eta d]$.

\medskip

\begin{lemma}\label{lemma:ODE_solution}
Let $\rho(t)$ be the (smooth) curve following the first order differential equation
    \begin{align}\label{eqn:ODE}
        \dfrac{d\rho(t)}{dt} &= \rho(t)(\alpha - \beta\rho^2(t))
    \end{align}starting at $\rho(0)$ satisfying $\rho^2(0)\in(0, \alpha/\beta)$. The solution $\rho(t)$ is then given by
    \begin{align*}
        \rho(t) &= \dfrac{\sqrt{\alpha}\rho(0)\exp(\alpha t)}{\sqrt{\alpha +\beta\rho^2(0)(\exp(2\alpha t)-1)}}
    \end{align*}
\end{lemma}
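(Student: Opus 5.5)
The plan is to solve the logistic ODE \eqref{eqn:ODE} explicitly through the substitution $u(t)=1/\rho^2(t)$, which turns it into a linear first-order equation. Since $\rho^2(0)\in(0,\alpha/\beta)$, we have $\rho(0)\neq 0$. By uniqueness for the locally Lipschitz vector field $\rho\mapsto\rho(\alpha-\beta\rho^2)$, the solution never hits the equilibria $0$ or $\pm\sqrt{\alpha/\beta}$. Hence $\rho(t)$ keeps the sign of $\rho(0)$, $\rho^2(t)$ stays in $(0,\alpha/\beta)$, and the substitution is legitimate for all $t\geq 0$. This invariance is also what will later let us recover $\rho(t)$ from $\rho^2(t)$ by taking a square root with the sign of $\rho(0)$.

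The first step is to compute $u'$. We have $u'=-2\rho'/\rho^3=-2(\alpha-\beta\rho^2)/\rho^2=-2\alpha u+2\beta$. The second step is to solve this linear equation with the integrating factor $e^{2\alpha t}$, which gives $(e^{2\alpha t}u)'=2\beta e^{2\alpha t}$. Integrating, we obtain
\begin{align*}
u(t)=e^{-2\alpha t}u(0)+\dfrac{\beta}{\alpha}\left(1-e^{-2\alpha t}\right).
\end{align*}
The third step is to invert. Multiplying through by $\alpha\rho^2(0)e^{2\alpha t}$ yields
\begin{align*}
\rho^2(t)=\dfrac{\alpha\rho^2(0)e^{2\alpha t}}{\alpha+\beta\rho^2(0)(e^{2\alpha t}-1)}.
\end{align*}
Taking the square root with the sign of $\rho(0)$ gives the stated formula. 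The denominator is positive for all $t\geq 0$ because $\alpha>0$ and $e^{2\alpha t}-1\geq 0$, so the square root is well defined.

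As a final check, we will verify directly that the formula satisfies $\rho(0)$ at $t=0$. We will also differentiate it to confirm \eqref{eqn:ODE}, which a short computation of $\tfrac{d}{dt}\log\rho$ does. Uniqueness of solutions then closes the argument. The only point needing care is the sign and non-vanishing issue in the substitution, and the invariance argument in the first paragraph handles it. Everything else is routine calculus, so I expect no real obstacle.
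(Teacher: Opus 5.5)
Your proof is correct, but it integrates the equation differently from the paper. The paper separates variables and integrates $1/(y(\alpha-\beta y^2))$ by partial fractions. This gives the implicit relation $2\alpha t = 2\log|\rho(t)/\rho(0)| - \log|(\alpha-\beta\rho^2(t))/(\alpha-\beta\rho^2(0))|$. The paper then cites a phase-line analysis to drop the absolute values and leaves the inversion to ``some algebra''.

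Your substitution $u=1/\rho^2$ turns the equation into the linear ODE $u'=-2\alpha u+2\beta$. So $\rho^2(t)$ comes out explicitly and there is no implicit equation to invert. The only qualitative input you need is that $\rho$ never vanishes and keeps its sign. Your uniqueness argument at the equilibria proves this rigorously, and it also gives global existence since the trajectory stays in a bounded interval. The paper only invokes this fact without proof.

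A side benefit is that the hypothesis $\rho^2(0)<\alpha/\beta$ never enters your computation. The same formula therefore holds for every $\rho(0)\neq 0$ and $t\geq 0$. The paper's route additionally needs $\alpha-\beta\rho^2(t)$ to keep its sign in order to remove the absolute value in the second logarithm. Your closing check by differentiation is redundant once the derivation is complete, but harmless.
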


\begin{proof}[Proof of Lemma \ref{lemma:ODE_solution}]
    Write $y\equiv \rho(t)$ for brevity. We separate the variables and integrate:
    \begin{align*}
        t &= \int_{\rho(0)}^{\rho(t)}\dfrac{dy}{y(\alpha - \beta y^2)}\\
        &= \dfrac{1}{2\alpha}\left[2\log\left|\dfrac{\rho(t)}{\rho(0)}\right|-\log\left|\dfrac{\alpha -\beta \rho^2(t)}{\alpha -\beta\rho^2(0)}\right|\right]
    \end{align*}A standard phase analysis (see for example \citet{robinson2004introduction}) of the function on the right side of the ODE, $g(x)=x(\alpha -\beta x^2)$, establishes that since $0<\rho^2(0)<\alpha/\beta$, we have $0\leq \rho^2(t)\leq \alpha/\beta$ and also $\sign(\rho(t))=\sign(\rho(0))$ for all $t$. This enables us to conclude, after some algebra, that the solution to the ODE equals
    \begin{align*}
        \rho(t) &= \dfrac{\sqrt{\alpha}\rho(0)\exp(\alpha t)}{\sqrt{\alpha +\beta\rho^2(0)(\exp(2\alpha t)-1)}}
    \end{align*}
\end{proof}

\medskip

\begin{lemma}\label{lemma:ode_tracking}
    Let $\rho(t)$ be the (smooth) curve following the first order differential equation
    \begin{align}\label{eqn:ODE}
        \dfrac{d\rho(t)}{dt} &= \rho(t)(\alpha - \beta\rho^2(t))
    \end{align}starting at $\rho(0)=\rho_{n_-}$. Then, for any $k\leq 2t_d d$,
    \begin{align*}
        \E|\rho_{n_-+k}-\rho(k/d)|&\leq C\left(\sqrt{\dfrac{t_d}{d}}+\dfrac{t_d}{d}\right)\exp(Ct_d)
    \end{align*}
\end{lemma}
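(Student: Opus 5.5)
We compare the discrete recursion (\ref{eqn:recursion_final}) started at time $n_-$ with an Euler discretization of the ODE, and control the error with the discrete Gronwall Lemma \ref{lemma:gronwall}. Write $g(x)=x(\alpha-\beta x^2)$. Lemma \ref{lemma:main_recursion} gives, for $k\geq 1$,
\begin{align*}
\rho_{n_-+k} = \rho_{n_-} + \dfrac{1}{d}\sum_{l=0}^{k-1} g(\rho_{n_-+l}) + \dfrac{1}{d}\sum_{l=1}^{k} M_{n_-+l} + \dfrac{1}{d^2}\sum_{l=1}^k R_{n_-+l}.
\end{align*}
The ODE solution in integral form is
\begin{align*}
\rho(k/d)=\rho(0)+\int_0^{k/d} g(\rho(s))\,ds .
\end{align*}
Since $\rho(0)=\rho_{n_-}$, the initial terms cancel. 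Set $m_k=\E|\rho_{n_-+k}-\rho(k/d)|$.

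Subtracting the two identities leaves three error contributions. The first is the drift difference $\frac1d\sum_{l<k}\big(g(\rho_{n_-+l})-g(\rho(l/d))\big)$. Since $|\rho_j|\leq 1$, and Lemma \ref{lemma:ODE_solution} with its phase analysis gives $|\rho(t)|\leq 1$ (using $\rho_*<1$ when $\rho_{n_-}^2<\alpha/\beta$; otherwise one uses $|\rho(t)|\le \max(|\rho(0)|,\rho_*)\le 1$), $g$ is Lipschitz on $[-1,1]$ with constant $L=\alpha+3\beta$. So this contribution is at most $\frac{L}{d}\sum_{l<k} m_l$ in expectation.

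The second contribution is the Riemann-sum discretization error $\big|\frac1d\sum_{l<k} g(\rho(l/d))-\int_0^{k/d}g(\rho(s))ds\big|$. Here $|\rho'|=|g(\rho)|\le C$ on $[-1,1]$, so each cell contributes $O(1/d^2)$. Summed over $k\le 2t_dd$ cells this gives $O(t_d/d)$.

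The third contribution is the noise. Doob/Burkholder together with the moment bound $\E M_j^2\le C$ gives
\begin{align*}
\E\Big|\frac1d\sum_{l=1}^k M_{n_-+l}\Big|\le \frac{1}{d}\Big(\sum_{l=1}^k \E M_{n_-+l}^2\Big)^{1/2}\le C\sqrt{k}/d\le C\sqrt{t_d/d},
\end{align*}
because the martingale differences are orthogonal. The remainder satisfies $\E|\frac1{d^2}\sum R|\le Ck/d^2\le Ct_d/d$.

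Combining these three contributions gives
\begin{align*}
m_k\le a + \frac{L}{d}\sum_{l=0}^{k-1} m_l, \qquad a=C\Big(\sqrt{t_d/d}+t_d/d\Big),
\end{align*}
uniformly for $k\le 2t_dd$. Lemma \ref{lemma:gronwall} with $b=L/d$ then yields $m_k\le a\exp(Lk/d)\le a\exp(2Lt_d)$, which is the claim with $C=2L$ after enlarging constants.

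The main obstacle is justifying the uniform Lipschitz bound on $g$ along both trajectories, i.e. that the ODE solution stays bounded for all starting values $\rho_{n_-}\in[-1,1]$, not only those with $\rho^2(0)<\alpha/\beta$ covered by Lemma \ref{lemma:ODE_solution}. The pitchfork phase portrait handles this: the interval $[-\max(|\rho(0)|,\rho_*),\max(|\rho(0)|,\rho_*)]$ is invariant. A second subtlety is that $\rho_{n_-}$ is random, so the ODE solution is random. This is harmless because $\rho_{n_-}$ is $\gF_{n_-}$-measurable: the martingale bound holds conditionally on $\gF_{n_-}$, and the Lipschitz bound is pathwise. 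The Gronwall argument therefore goes through after taking expectations at the end.
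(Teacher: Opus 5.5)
Your proposal is correct and follows the paper's proof essentially step for step. You unfold the recursion from $n_-$ and compare it with the ODE, bound the drift difference using the Lipschitz property of $g$ on $[-1,1]$, bound the martingale term by $C\sqrt{k}/d$ via orthogonality, bound the remainder and discretization errors by $Ck/d^2$, and close with the discrete Gronwall Lemma~\ref{lemma:gronwall}. Your only departures are either cosmetic or slightly more careful than the paper:
\begin{itemize}
\item You compare against the integral form with a Riemann-sum error, where the paper uses a second-order Taylor expansion with $|\rho''|\le C$; the two are equivalent.
\item You index the martingale differences as $M_{n_-+l}$, which the paper leaves implicit.
\item You check via the invariant interval that $|\rho(t)|\le 1$ for every initial value, whereas the paper uses this bound implicitly.
\end{itemize}
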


\begin{proof}[Proof of Lemma \ref{lemma:ode_tracking}]
    Let $g(x)=x(\alpha - \beta x^2)$ so that we may rewrite the differential equation \ref{eqn:ODE} as
    \begin{align*}
        \rho'(t) &= g(\rho(t))
    \end{align*}
    By Lemma \ref{lemma:main_recursion},
    \begin{align}\label{eqn:ODE_discrete}
        \rho_{n_-+k} &= \rho_{n_-+k-1}+\dfrac{g(\rho_{n_-+k-1})}{d}+\dfrac{M_k}{d}+\dfrac{R_k}{d^2}
    \end{align}
    By Taylor expansion of the function $\rho(k/d)$ around $(k-1)/d$, we get
    \begin{align}\label{eqn:ODE_taylor}
    \begin{split}
    \rho(k/d) &= \rho((k-1)/d) + \dfrac{\rho'((k-1)/d)}{d} + \dfrac{\rho''(\xi_{k,d})}{2d^2} \\
    &= \rho((k-1)/d) + \dfrac{g(\rho((k-1)/d))}{d} + \dfrac{\rho''(\xi_{k,d})}{2d^2}
    \end{split}
    \end{align}where $\xi_{k,d}\in[(k-1)/d,k/d]$. Let $e_k=\rho_{n_-+k}-\rho(k/d)$ for each $k$. Then, subtracting \ref{eqn:ODE_taylor} from \ref{eqn:ODE_discrete}, we get
    \begin{align*}
        e_k &= e_{k-1} + \dfrac{1}{d}[g(\rho_{n_-+k-1})-g(\rho((k-1)/d))] + \dfrac{M_k}{d} + \dfrac{1}{2d^2}\left(2R_k+\rho''(\xi_{k,d})\right)
    \end{align*}Unfolding this recursion and using $e_0=0$ (since, recall, $\rho(0)=\rho_{n_-}$) we get
    \begin{align*}
        e_k &= \dfrac{1}{d}\sum_{l=0}^{k-1}[g(\rho_{n_-+l})-g(\rho(l/d))] + \dfrac{1}{d}\sum_{l=1}^k M_l + \dfrac{1}{2d^2}\sum_{l=1}^k (2R_l+\rho''(\xi_{l,d}))
    \end{align*} We will now take absolute value and then expectation on both sides to get
    \begin{align*}
        \E|e_k| &\leq \dfrac{1}{d}\sum_{l=0}^{k-1}\E|g(\rho_{n_-+l})-g(\rho(l/d))| + \dfrac{1}{d}\E\left|\sum_{l=1}^k M_l\right| + \dfrac{1}{2d^2}\sum_{l=1}^k \E|2R_l+\rho''(\xi_{l,d})|
    \end{align*}
    
    Note that $g'(x)=\alpha - 3\beta x^2$, and thus for $|x|\leq 1$, $g'$ is uniformly bounded by a constant, say $C$, implying that $g$ is Lipschitz: $|g(x)-g(y)|\leq C|x-y|$ whenever $-1\leq x,y\leq 1$. Thus, for all $0\leq l\leq k$,
    \begin{align*}
        |g(\rho_{n_-+l})-g(\rho(l/d))| \leq C|\rho(n_-+l)-\rho(l/d)| = C|e_l|
    \end{align*}
    Next, since $\sum_{l=1}^k M_l/d$ is a martingale with each term $\E[M_l^2]\leq C$,
    \begin{align*}
        \E\left|\sum_{l=1}^k M_l\right| \leq \left(\E\left(\sum_{l=1}^k M_l\right)^2\right)^{1/2} \leq C\sqrt{k}
    \end{align*}Finally, $\E|R_l|\leq C$ for all $l$. Further, $$\rho''(x)=g'(\rho(x))\rho'(x)=\rho(x)(\alpha-3\beta \rho^2(x))(\alpha - \beta \rho^2(x))$$so that $|\rho''(x)|\leq C$ for all $x$. Putting all this together,
    \begin{align*}
        \E|e_k| &\leq \dfrac{C}{d}\sum_{l=0}^{k-1}\E|e_l| + \dfrac{C\sqrt{k}}{d} + \dfrac{Ck}{d^2}
    \end{align*}Then, Lemma \ref{lemma:gronwall} implies that
    \begin{align*}
        \E|e_k|\leq C\left(\dfrac{\sqrt{k}}{d}+\dfrac{k}{d^2}\right)\exp(Ck/d)
    \end{align*}Since the upper bound is increasing in $k$, it is maximized when $k=2t_d d$, and we get the desired conclusion.
\end{proof}


\medskip

We are now ready to prove the distribution of the limiting overlap at criticality.

\medskip

\begin{proof}[Proof of Theorem \ref{thm:critical}]
        Let $t_d\to\infty$ sufficiently slowly such that the right side in Lemma \ref{lemma:ode_tracking} goes to $0$ as $d\to\infty$. One choice may be $t_d=\log\log d$. Then, Lemma \ref{lemma:ode_tracking} implies that for any $k\leq 2t_d d$, $\rho_{n_-+k}-\rho(k/d)\stackrel{p}{\to}0$. In particular, taking $k=(t_d+\theta)d$, and correspondingly $n=[\gamma_*d\log d + \theta d]$, we get
    \begin{align*}
        \rho_n -\rho(t_d+\theta)\stackrel{p}{\to}0
    \end{align*}But from the explicit form of $\rho(t)$ from Lemma \ref{lemma:ODE_solution},
    \begin{align*}
        \rho(t_d+\eta) &= \dfrac{\sqrt{\alpha}\rho_{n_-}\exp(\alpha (t_d+\eta))}{\sqrt{\alpha + \beta\rho^2_{n_-}(\exp(2(\alpha t_d+ \eta)-1)}}
    \end{align*}By Lemma \ref{lemma:exact_behavior_rho_subcritical}, $\exp(\alpha t_d)\rho_{n_-}\stackrel{d}{\to}\gN(0, v^2)$ with $v^2=1+\gamma_*\delta^2(\theta^2+1)$. By the continuous mapping theorem, this implies that
    \begin{align*}
        \rho(t_d+\eta) \stackrel{d}{\to}\dfrac{\sqrt{\alpha}v\exp(\alpha\eta)G}{\sqrt{\alpha + \beta v^2 \exp(2\alpha\eta) G^2}}
    \end{align*}where $G\sim\gN(0,1)$. An application of Slutsky's lemma then transfers this limit to $\rho_n$, and we conclude
    \begin{align*}
        \rho_n \stackrel{d}{\to}\dfrac{\sqrt{\alpha}v\exp(\alpha\eta)G}{\sqrt{\alpha + \beta v^2 \exp(2\alpha\eta) G^2}}
    \end{align*}Dividing the numerator and denominator by $\sqrt{\beta}$, we get
    \begin{align*}
        \rho_n \stackrel{d}{\to}\dfrac{\rho_*v\exp(\alpha\eta)G}{\sqrt{\rho_*^2 + v^2 \exp(2\alpha\eta) G^2}}
    \end{align*}Finally, using the formula for $\rho_*$, note that
    \begin{align*}
        v^2\times \rho_*^2 &= \left(1 + \dfrac{(\theta^2+1)\delta^2}{2\delta(\theta^2-\delta/2)}\right)\times \dfrac{\theta^2-\delta/2}{\theta^2(1+\delta/2)}\\
        &= \dfrac{2(\theta^2-\delta/2)+\delta(\theta^2+1)}{2(\theta^2-\delta/2)}\times \dfrac{\theta^2-\delta/2}{\theta^2(1+\delta/2)}\\
        &=\theta^2(1 + \delta/2)\times\dfrac{1}{\theta^2(1+\delta/2)} = 1
    \end{align*}Thus, $v=1/\rho_*$. Substituting this into the limit above, we complete the proof.
\end{proof}


\medskip

A simple extension of the proof of Theorem \ref{thm:critical} yields the following value of the overlap around $n=[\gamma_*d\log d + t_d d]$. Note that since $t_d\to\infty$, we have crossed the critical region and we are in the vicinity of $\rho_*$.

\medskip

\begin{lemma}\label{lemma:critical_upper_boundary}
    Let $n_+=[\gamma_*d\log d+t_d d]$. Then,
    \begin{align*}
        |\rho_{n_+}|\stackrel{p}{\to}|\rho_*|
    \end{align*}
\end{lemma}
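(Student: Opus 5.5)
We follow the proof of Theorem \ref{thm:critical}, with the fixed shift $\eta$ replaced by the diverging shift $t_d$. The starting point is again $n_-=[\gamma_*d\log d-t_dd]$, so $n_+-n_-=2t_dd$ up to rounding. This is exactly the endpoint $k=2t_dd$ allowed in Lemma \ref{lemma:ode_tracking}.

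First, choose $t_d\to\infty$ slowly enough, for instance $t_d=\log\log d$ or slower, that the bound $C(\sqrt{t_d/d}+t_d/d)\exp(Ct_d)$ of Lemma \ref{lemma:ode_tracking} tends to $0$. Then Lemma \ref{lemma:ode_tracking} with $k=n_+-n_-$ gives
\begin{align*}
    \rho_{n_+}-\rho(2t_d)\stackrel{p}{\to}0,
\end{align*}
where $\rho(\cdot)$ solves the logistic ODE with $\rho(0)=\rho_{n_-}$.

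If the lemma is meant for arbitrary $t_d\to\infty$ with $t_d/\log d\to0$, we add a monotonicity step. Pick a slow $s_d\le t_d$ for which the above holds, and use the supercritical-type stability near $\rho_*$ over the extra window, as in the reduction below. For this plan we concentrate on the slow choice.

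Second, evaluate $\rho(2t_d)$ using Lemma \ref{lemma:ODE_solution}. Set $Y_d=\exp(\alpha t_d)\rho_{n_-}$; by Lemma \ref{lemma:exact_behavior_rho_subcritical}, $Y_d\stackrel{d}{\to}\gN(0,v^2)$ with $v=1/\rho_*$. Substituting gives
\begin{align*}
    \rho(2t_d)=\frac{\sqrt{\alpha}\,Y_d\,e^{\alpha t_d}}{\sqrt{\alpha+\beta Y_d^2e^{-2\alpha t_d}(e^{4\alpha t_d}-1)}}
    =\frac{\sqrt{\alpha}\,Y_d}{\sqrt{\alpha e^{-2\alpha t_d}+\beta Y_d^2(1-e^{-4\alpha t_d})}}.
\end{align*}
This is a continuous function $h_d(Y_d)$, and $h_d(y)\to\sqrt{\alpha/\beta}\,\sign(y)=\rho_*\sign(y)$ uniformly on $\{|y|\ge\epsilon\}$. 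Since the limit law $\gN(0,v^2)$ has no atom at $0$, $\sP(|Y_d|<\epsilon)$ is eventually at most $\sP(|G|<\epsilon\rho_*)+o(1)$, which is small. Hence $|\rho(2t_d)|\stackrel{p}{\to}\rho_*$, and combining with the first step gives $|\rho_{n_+}|\stackrel{p}{\to}\rho_*$.

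One hypothesis of Lemma \ref{lemma:ODE_solution} needs checking: the lemma requires $\rho^2(0)<\alpha/\beta$. Here $\rho_{n_-}=O_p(e^{-\alpha t_d})\to0$, so this holds with probability tending to one. On the event $\rho_{n_-}=0$ the formula remains valid trivially.

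The main obstacle is the first step's reliance on Lemma \ref{lemma:ode_tracking}, whose error grows like $\exp(Ct_d)$ through Gronwall, with $C$ the Lipschitz constant of $g$ on $[-1,1]$. It vanishes only when $t_d$ grows slower than a small multiple of $\log d$. Under that restriction on $t_d$, consistent with the proof of Theorem \ref{thm:critical}, the tracking error is $o(1)$ and the argument closes.

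For faster $t_d$, we would first reach near $\rho_*$ at $n_-+2s_dd$ with a slow $s_d$. We would then run a separate stability argument on the remaining window, using that $g'(\rho_*)=-2\alpha<0$. Under the linearized recursion, fluctuations around $\pm\rho_*$ are contracted at rate $e^{-2\alpha k/d}$, with martingale noise of size $O(d^{-1/2})$. This makes the deviation $O_p(d^{-1/2})$ uniformly over the window.
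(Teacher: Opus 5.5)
Your argument is correct and is essentially the paper's proof: apply Lemma \ref{lemma:ode_tracking} with $k=2t_dd$, for the same slowly growing $t_d$ used in the proof of Theorem \ref{thm:critical}, then rewrite $\rho(2t_d)$ in terms of $Y_d=e^{\alpha t_d}\rho_{n_-}$ and use the atomless Gaussian limit from Lemma \ref{lemma:exact_behavior_rho_subcritical} to get $|\rho(2t_d)|\to\rho_*$. Your uniform-convergence step away from $\{|y|<\epsilon\}$ and your check of the hypothesis of Lemma \ref{lemma:ODE_solution} make explicit what the paper's continuous-mapping/Slutsky step leaves implicit. The extension to faster $t_d$ is not needed, since the paper only uses the slow $t_d$; if you wanted it, the bound $\E[e_k^2]\le\E[e_{k-1}^2]+C/d^2$ for $e_k=\rho_k^2-\rho_*^2$ from the proof of Theorem \ref{thm:supercritical} handles the extra window more simply than linearization.
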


\medskip

\begin{proof}[Proof of Lemma \ref{lemma:critical_upper_boundary}]
    This is a simple extension of the proof of Theorem \ref{thm:critical}. Since $k=2t_d d$ in this case, Lemma \ref{lemma:ode_tracking} implies that $\rho_n-\rho(2t_d)\stackrel{p}{\to}0$. Now,
    \begin{align*}
        \rho(2t_d) &= \dfrac{\sqrt{\alpha}\rho_{n_-}\exp(2\alpha t_d)}{\sqrt{\alpha + \beta \rho^2_{n_-}(\exp(4\alpha t_d)-1)}}\\
        &= \dfrac{\sqrt{\alpha}\exp(\alpha t_d)\rho_{n_-}}{\sqrt{\alpha \exp(-2\alpha t_d)+\beta\rho^2_{n_-}(\exp(2\alpha t_d)-\exp(-2\alpha t_d))}}
    \end{align*}and since $\exp(\alpha t_d)\rho_{n_-}\stackrel{d}{\to}vG$ for $G\sim \gN(0,1)$, we get $\rho(2t_d)\stackrel{d}{\to}\rho_*S$, for $S=\sign(G)$. Since $G\sim\gN(0,1)$, $S$ is Rademacher. Consequently, Slutsky's lemma implies that $\rho_n\stackrel{d}{\to}\rho_*S$, which further implies the result.
\end{proof}

\medskip

Lemma \ref{lemma:critical_upper_boundary} is important because it will help us establish that when $n/d\log(d) \to \gamma > \gamma_*$, i.e. we are in the supercritical region, the squared overlap remains $\rho_*^2$. This establishes the curious phenomenon that the limiting overlap does not depend on $\gamma$ at all!


\medskip

\begin{proof}[Proof of Theorem \ref{thm:supercritical}]
    We start with recursion \ref{eqn:recursion_final}. Let $e_k=\rho_k^2-\rho_*^2$ for all $k$, then we write
    \begin{align*}
        \E[e_k^2] &= \E\left[\left(\rho_{k-1}+\dfrac{\rho_{k-1}(\alpha-\beta\rho_{k-1}^2)}{d}+\dfrac{M_k}{d}+\dfrac{R_k}{d^2}\right)^2-\rho_*^2\right]^2\\
        &\leq \E[e^2_{k-1}] + \dfrac{4\E[\rho^2_{k-1}e_{k-1}(\alpha-\beta\rho^2_{k-1})]}{d}+\dfrac{C}{d^2}
    \end{align*}where we expanded the square, used the martingale property of $M_k$ and bounded the rest of the expectations using boundedness and finite moments of the variables involved. Note that the middle term is actually negative:
    \begin{align*}
        & &e_{k-1}(\alpha-\beta\rho^2_{k-1}) &= -\beta e_{k-1}^2\\
        &\implies &\E[\rho^2_{k-1}e_{k-1}(\alpha-\beta\rho^2_{k-1})] &= -\beta \E[\rho^2_{k-1}e_{k-1}^2]<0
    \end{align*}where we used the exact form $\rho_*^2=\alpha/\beta$. Thus, 
    \begin{align*}
        \E[e_k^2] &\leq \E[e_{k-1}^2]+C/d^2
    \end{align*}which implies, for any $l<k$,
    \begin{align*}
        \E[e_k^2] &\leq \E[e_l^2] + C(k-l)/d^2
    \end{align*}We take $k=n$ and $l=n_+$. By Lemma \ref{lemma:critical_upper_boundary}, $\rho_{n_+}^2\stackrel{p}{\to}\rho_*^2$ which implies $e_{n_+}^2\stackrel{p}{\to}0$. Since $0\leq e_{n_+}^2\leq 1$, the bounded convergence theorem implies that $\E[e_{n_+}^2]\to 0$. Further, $k-l\leq n=\gamma d\log d(1 + o(1))$, so $(k-l)/d^2\leq \gamma(\log d)(1+o(1))/d\to 0$. Thus, we get $\E[e_n^2]\to0$ and hence $e_n\stackrel{p}{\to}0$. This completes the proof.
\end{proof}

We now come to the proof of Theorem \ref{thm:BAGJ}. The proof is very similar to that of Theorems \ref{thm:subcritical}, \ref{thm:critical} and \ref{thm:supercritical} for ordinary Oja, and hence we only provide the main steps. Towards this, we identify the main recursions analogous to those in Lemma \ref{lemma:main_recursion}.

\medskip

\begin{lemma}\label{lemma:BAGJ_recursion}
    Recall that $\alpha = \delta(\theta^2-\delta/2)$ and $\beta = \delta\theta^2(1+\delta/2)$. The overlap sequence $\rho^\sph_k$ satisfies the recursion
    \begin{align}\label{eqn:BAGJ_rho_recursion}
        \rho_k^\sph &= \rho_{k-1}^\sph\left(1+\dfrac{\alpha}{d}\right) - \dfrac{\beta(\rho_{k-1}^\sph)^3}{d}+\dfrac{M_k^\sph}{d} + \dfrac{R_k^\sph}{d^2}
    \end{align}where $M_k^\sph$ is a martingale difference adapted to the filtration $\gF_k:=\{\hat v_0,X_1,\cdots, X_k\}$. Both $M_k$ and $R_k$ have bounded moments of all orders. Let, as before, $c_d=1+\alpha/d$. Unfolding recursion (\ref{eqn:BAGJ_rho_recursion}), we get,
    \begin{align}\label{eqn:BAGJ_rho_recursion_unfolded}
      \rho_n^\sph &= c_d^n\rho_0^\sph - \dfrac{\beta}{d}\sum_{k=1}^n c_d^{n-k}(\rho_{k-1}^\sph)^3 + \dfrac{1}{d}\sum_{k=1}^n c_d^{n-k}M^\sph_k + \dfrac{1}{d^2}\sum_{k=1}^n R^\sph_k
    \end{align}
\end{lemma}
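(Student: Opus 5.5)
We will mimic the proof of Lemma \ref{lemma:main_recursion}, reducing everything to the scalars $A_k,B_k,C_k,\|D_k\|^2$ of Lemma \ref{lemma:Xk_components_indep}. Those were defined with respect to $\hat v_{k-1}^\sph$, and the independence statements hold unchanged because they only use that $\hat v^\sph_{k-1}$ is $\gF_{k-1}$-measurable.

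\textbf{Exact recursion.} Write $\hat v=\hat v_{k-1}^\sph$ and $h=\delta/d$. The unnormalized update is $\tilde v_k^\sph=(1-hB_k^2)\hat v+hB_kX_k$. Taking the inner product with $v_0$ gives
\begin{align*}
\langle\tilde v_k^\sph,v_0\rangle=\rho_{k-1}(1-hB_k^2)+hA_kB_k .
\end{align*}
For the norm, use $\langle \hat v,X_k\rangle=B_k$ to get
\begin{align*}
\|\tilde v_k^\sph\|^2=(1-hB_k^2)^2+2h(1-hB_k^2)B_k^2+h^2B_k^2\|X_k\|^2=1+h^2B_k^2(\|X_k\|^2-B_k^2).
\end{align*}
The key structural point is that the $O(1/d)$ term $2hB_k^2$ from ordinary Oja cancels. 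It has been absorbed into the numerator as $-hB_k^2\rho_{k-1}$, so the first-order contributions match ordinary Oja. The norm correction is $h^2B_k^2\|X_k\|^2=(\delta^2/d)B_k^2(\|X_k\|^2/d)$, which is also of order $1/d$ and is identical to the ordinary Oja term. The extra piece $h^2B_k^4$ is $O(1/d^2)$.

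\textbf{Expansion.} Apply $f(x)=(1+x)^{-1/2}=1-x/2+O(x^2)$ with $x\ge0$, exactly as in Lemma \ref{lemma:main_recursion}. This yields
\begin{align*}
\rho_k=\rho_{k-1}+\frac{\delta}{d}\Big(\rho_{k-1}A_k^2+\sqrt{1-\rho_{k-1}^2}A_kC_k-\rho_{k-1}B_k^2\Big)-\frac{\delta^2}{2d}\rho_{k-1}B_k^2\frac{\|D_k\|^2}{d}+\frac{R_k^\sph}{d^2}.
\end{align*}
Here $R_k^\sph$ collects the Taylor remainder, the products of $hA_kB_k$ with $O(1/d)$ corrections, and the replacement of $\|X_k\|^2$ by $\|D_k\|^2$. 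This first-order expression coincides with the one obtained for ordinary Oja.

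\textbf{Drift and martingale.} Take conditional expectations using Lemma \ref{lemma:Xk_components_indep}:
\begin{align*}
\E[A_k^2]=\theta^2+1,\qquad \E[B_k^2\mid\gF_{k-1}]=\theta^2\rho_{k-1}^2+1,\qquad \E[\|D_k\|^2/d]=1+O(1/d).
\end{align*}
The conditional mean of the $1/d$ bracket is then
\begin{align*}
\delta\rho_{k-1}\theta^2(1-\rho_{k-1}^2)-\tfrac{\delta^2}{2}\rho_{k-1}(\theta^2\rho_{k-1}^2+1)=\alpha\rho_{k-1}-\beta\rho_{k-1}^3 ,
\end{align*}
which gives the stated $\alpha$ and $\beta$. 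Define $M_k^\sph$ as the centered version, which is the same formula as (\ref{eqn:martingale_def}), and push the $O(1/d)$ discrepancy from $\E\|D_k\|^2/d$ into $R_k^\sph$. Uniform moment bounds on $M_k^\sph$ and $R_k^\sph$ follow because both are polynomials in $A_k$, $C_k$, $\|D_k\|^2/d$ with bounded coefficients, since $|\rho_{k-1}|\le1$. Unfolding the one-step recursion with $c_d=1+\alpha/d$ is then immediate; the last sum should carry the factor $c_d^{n-k}$.

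\textbf{Main obstacle.} The step requiring care is the remainder control. The Taylor remainder is multiplied by the numerator factor $(1-hB_k^2)$, which can be negative or large on rare events. This is harmless because it enters polynomially, and Gaussian and $\chi^2_{d-2}/d$ moments of all orders are bounded uniformly in $d$, so Hölder's inequality gives $\sup_k\E|R_k^\sph|^j\le C_j$.
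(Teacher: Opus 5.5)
Your proposal is correct and follows essentially the same route as the paper. Both compute the exact one-step ratio, Taylor-expand $(1+x)^{-1/2}$ to first order, read off the drift $\alpha\rho_{k-1}-\beta\rho_{k-1}^3$ via Lemma~\ref{lemma:Xk_components_indep}, and take $M_k^\sph$ to be the conditionally centered first-order term. Your extra observations are accurate refinements rather than a different method. The first-order bracket does coincide with ordinary Oja's. Keeping $\|D_k\|^2/d$ inside $M_k^\sph$ is more careful than the paper's expansion, which effectively replaces $\|X_k\|^2/d$ by $1$; that leaves an $O_p(d^{-3/2})$ error that cannot be absorbed into $R_k^\sph/d^2$. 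And the remainder sum in (\ref{eqn:BAGJ_rho_recursion_unfolded}) should indeed carry the factor $c_d^{n-k}$.
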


\medskip

\begin{proof}[Proof of Lemma \ref{lemma:BAGJ_recursion}]
    We start with the spherical Oja algorithm (\ref{eqn:spherical_oja}). We define the analogous variables as we had done for ordinary Oja.
    \begin{align*}
        \rho^\sph_k &= \langle \hat v^\sph_k,v_0\rangle, \\
        A_k &= \langle X_k, v_0\rangle, \\
        e_k^\sph &= \dfrac{\hat v_k^\sph - \rho_k^\sph v_0}{\sqrt{1-(\rho_k^\sph)^2}},\\
        C_k^\sph &= \langle X_k, e_{k-1}^\sph\rangle,\\
        B_k^\sph &= A_kv_0 + C_k^\sph e_{k-1}^\sph,
    \end{align*}
    After some algebra, we get
    \begin{align*}
        \rho^\sph_k &= \dfrac{\rho^\sph_{k-1}+(\delta/d)B_k^\sph(A_k-B_k^\sph\rho^\sph_{k-1})}{[1+(\delta^2/d^2)(B_k^\sph)^2(\|X_k\|^2-(B_k^\sph)^2)]^{1/2}}
    \end{align*}
    Then, following the approximation argument presented in the proof of Lemma \ref{eq:recursion_rho}, we can derive that
    \begin{align*}
        \rho_k^\sph &= \rho_{k-1}^\sph - \dfrac{\delta^2}{2d}(\rho^\sph_{k-1})^3A^2_k - \dfrac{\delta^2(\rho^\sph_{k-1})^2\sqrt{1-(\rho_{k-1}^\sph)^2}A_kC_k^\sph}{d}\\
        &- \dfrac{\delta^2}{2d}\rho_{k-1}^\sph(1-(\rho^\sph_{k-1})^2)(C^\sph_k)^2+\dfrac{\delta\rho_{k-1}^\sph (1-(\rho^\sph_{k-1})^2)A_k^2}{d} \\
        &- \dfrac{\delta\rho_{k-1}^\sph(1-(\rho_{k-1}^\sph)^2)(C_k^\sph)^2}{d} + \delta(1-2(\rho^\sph_{k-1})^2)\sqrt{1-(\rho^\sph_{k-1})^2}A_kC_k^\sph +\dfrac{R_k^\sph}{d^2}
    \end{align*}where $R_k^\sph$ has uniformly bounded moments of all orders. To get the martingale term, we compute the conditional mean of the right side excluding $R_k^\sph$ given $\gF_{k-1}$. Note that, just as before, $A_k,C_k$ are independent $\gN(0,\theta^2+1)$ and $\gN(0,1)$ given $\gF_{k-1}$, and hence independent of $\gF_{k-1}$. Thus, define
    \begin{align*}
        M_k^\sph &= -\dfrac{\delta^2(\rho_{k-1}^\sph)^3(A_k^2-(\theta^2+1))}{2} - \delta^2(\rho^\sph_{k-1})^2\sqrt{1-(\rho^\sph_{k-1})^2}A_kC_k^\sph\\
        &-\dfrac{\delta^2\rho_{k-1}^\sph(1-(\rho^\sph_{k-1})^2)((C^\sph_k)^2-1)}{2}+\delta\rho^\sph_{k-1}(1-(\rho^\sph_{k-1})^2)(A_k^2-(\theta^2+1)) \\
        &- \delta\rho_{k-1}^\sph(1-(\rho^\sph_{k-1})^2)((C^\sph_k)^2-1)+ \delta(1-2(\rho^\sph_{k-1})^2)\sqrt{1-(\rho^\sph_{k-1})^2}A_kC_k^\sph
    \end{align*}which ultimately gives, after some algebra, the desired recursion (\ref{eqn:BAGJ_rho_recursion}). Recursion (\ref{eqn:BAGJ_rho_recursion_unfolded}) is a straightforward corollary.
\end{proof}

\medskip

\begin{proof}[Proof of Theorem \ref{thm:BAGJ}]
    The martingales $M_k$ from Lemma \ref{lemma:main_recursion} and $M_k^\sph$ from Lemma \ref{lemma:BAGJ_recursion} are visibly different. However, recalling (\ref{ineq:Sk_bound}), in both cases, we have the same leading order term:
\begin{align*}
    M_k &= \delta A_k C_k + \rho_k \times (\text{terms with bounded moments})\\
    M_k^\sph &= \delta A_k C_k^\sph + \rho^\sph_k \times (\text{terms with bounded moments})
\end{align*}Consequently, we can derive an equivalent of Lemma \ref{lemma:martingale_simplified} by defining $\tilde M_k^\sph = \delta A_kC_k^\sph$ and $S_k^\sph=M_k^\sph-\tilde M_k^\sph$. This ensures that all the asymptotic characterizations are exactly identical in the two cases. Thereafter, the exact proof methodology as in ordinary Oja works for spherical Oja, and the exact same conclusion can be derived. This proves Theorem \ref{thm:BAGJ}.
\end{proof}

\paragraph{Acknowledgements.} Part of this work was conducted while the author was at Stanford University, hosted by David Donoho. The computational experiments presented in this paper were all done on Stanford's Sherlock compute cluster.

\bibliographystyle{conference}  



\begin{thebibliography}{35}
\providecommand{\natexlab}[1]{#1}
\providecommand{\url}[1]{\texttt{#1}}
\expandafter\ifx\csname urlstyle\endcsname\relax
  \providecommand{\doi}[1]{doi: #1}\else
  \providecommand{\doi}{doi: \begingroup \urlstyle{rm}\Url}\fi

\bibitem[Amini \& Wainwright(2008)Amini and Wainwright]{amini2008high}
Arash~A Amini and Martin~J Wainwright.
\newblock High-dimensional analysis of semidefinite relaxations for sparse principal components.
\newblock In \emph{2008 IEEE international symposium on information theory}, pp.\  2454--2458. IEEE, 2008.

\bibitem[Baik et~al.(2005)Baik, Arous, and P{\'e}ch{\'e}]{baikbenarouspeche}
Jinho Baik, G{\'e}rard~Ben Arous, and Sandrine P{\'e}ch{\'e}.
\newblock {Phase transition of the largest eigenvalue for nonnull complex sample covariance matrices}.
\newblock \emph{The Annals of Probability}, 33\penalty0 (5):\penalty0 1643 -- 1697, 2005.
\newblock \doi{10.1214/009117905000000233}.
\newblock URL \url{https://doi.org/10.1214/009117905000000233}.

\bibitem[Ben~Arous et~al.(2021)Ben~Arous, Gheissari, and Jagannath]{arous2021online}
Gerard Ben~Arous, Reza Gheissari, and Aukosh Jagannath.
\newblock Online stochastic gradient descent on non-convex losses from high-dimensional inference.
\newblock \emph{Journal of Machine Learning Research}, 22\penalty0 (106):\penalty0 1--51, 2021.

\bibitem[Ben~Arous et~al.(2022)Ben~Arous, Gheissari, and Jagannath]{ben2022high}
Gerard Ben~Arous, Reza Gheissari, and Aukosh Jagannath.
\newblock High-dimensional limit theorems for sgd: Effective dynamics and critical scaling.
\newblock \emph{Advances in neural information processing systems}, 35:\penalty0 25349--25362, 2022.

\bibitem[Benaych-Georges \& Nadakuditi(2011)Benaych-Georges and Nadakuditi]{benaych2011eigenvalues}
Florent Benaych-Georges and Raj~Rao Nadakuditi.
\newblock The eigenvalues and eigenvectors of finite, low rank perturbations of large random matrices.
\newblock \emph{Advances in Mathematics}, 227\penalty0 (1):\penalty0 494--521, 2011.

\bibitem[Benaych-Georges \& Nadakuditi(2012)Benaych-Georges and Nadakuditi]{benaych2012singular}
Florent Benaych-Georges and Raj~Rao Nadakuditi.
\newblock The singular values and vectors of low rank perturbations of large rectangular random matrices.
\newblock \emph{Journal of Multivariate Analysis}, 111:\penalty0 120--135, 2012.

\bibitem[Billingsley(2017)]{billingsley2017probability}
Patrick Billingsley.
\newblock \emph{Probability and measure}.
\newblock John Wiley \& Sons, 2017.

\bibitem[Boutsidis et~al.(2014)Boutsidis, Garber, Karnin, and Liberty]{boutsidis2014online}
Christos Boutsidis, Dan Garber, Zohar Karnin, and Edo Liberty.
\newblock Online principal components analysis.
\newblock In \emph{Proceedings of the twenty-sixth annual ACM-SIAM symposium on Discrete algorithms}, pp.\  887--901. SIAM, 2014.

\bibitem[Cardot \& Degras(2018)Cardot and Degras]{cardot2018online}
Herv{\'e} Cardot and David Degras.
\newblock Online principal component analysis in high dimension: Which algorithm to choose?
\newblock \emph{International Statistical Review}, 86\penalty0 (1):\penalty0 29--50, 2018.

\bibitem[Deshpande \& Montanari(2014)Deshpande and Montanari]{deshpande2014information}
Yash Deshpande and Andrea Montanari.
\newblock Information-theoretically optimal sparse pca.
\newblock In \emph{2014 IEEE International Symposium on Information Theory}, pp.\  2197--2201. IEEE, 2014.

\bibitem[Deshpande \& Montanari(2016)Deshpande and Montanari]{deshpande2016sparse}
Yash Deshpande and Andrea Montanari.
\newblock Sparse pca via covariance thresholding.
\newblock \emph{Journal of Machine Learning Research}, 17\penalty0 (141):\penalty0 1--41, 2016.

\bibitem[Donoho \& Tanner(2009)Donoho and Tanner]{donoho2009observed}
David Donoho and Jared Tanner.
\newblock Observed universality of phase transitions in high-dimensional geometry, with implications for modern data analysis and signal processing.
\newblock \emph{Philosophical Transactions of the Royal Society A: Mathematical, Physical and Engineering Sciences}, 367\penalty0 (1906):\penalty0 4273--4293, 2009.

\bibitem[Gheissari \& Jagannath(2025)Gheissari and Jagannath]{gheissari2025universality}
Reza Gheissari and Aukosh Jagannath.
\newblock Universality of high-dimensional scaling limits of stochastic gradient descent.
\newblock \emph{arXiv preprint arXiv:2512.13634}, 2025.

\bibitem[Henriksen \& Ward(2019)Henriksen and Ward]{henriksen2019adaoja}
Amelia Henriksen and Rachel Ward.
\newblock Adaoja: Adaptive learning rates for streaming pca.
\newblock \emph{arXiv preprint arXiv:1905.12115}, 2019.

\bibitem[Jain et~al.(2016)Jain, Jin, Kakade, Netrapalli, and Sidford]{jain2016streaming}
Prateek Jain, Chi Jin, Sham~M Kakade, Praneeth Netrapalli, and Aaron Sidford.
\newblock Streaming pca: Matching matrix bernstein and near-optimal finite sample guarantees for oja’s algorithm.
\newblock In \emph{Conference on learning theory}, pp.\  1147--1164. PMLR, 2016.

\bibitem[Johnstone(2001)]{johnstone2001distribution}
Iain~M Johnstone.
\newblock On the distribution of the largest eigenvalue in principal components analysis.
\newblock \emph{The Annals of statistics}, 29\penalty0 (2):\penalty0 295--327, 2001.

\bibitem[Johnstone \& Lu(2009)Johnstone and Lu]{johnstone2009consistency}
Iain~M Johnstone and Arthur~Yu Lu.
\newblock On consistency and sparsity for principal components analysis in high dimensions.
\newblock \emph{Journal of the American Statistical Association}, 104\penalty0 (486):\penalty0 682--693, 2009.

\bibitem[Johnstone \& Paul(2018)Johnstone and Paul]{johnstone2018pca}
Iain~M Johnstone and Debashis Paul.
\newblock Pca in high dimensions: An orientation.
\newblock \emph{Proceedings of the IEEE}, 106\penalty0 (8):\penalty0 1277--1292, 2018.

\bibitem[Krasulina(1970)]{krasulina1970method}
T~Krasulina.
\newblock Method of stochastic approximation in the determination of the largest eigenvalue of the mathematical expectation of random matrices.
\newblock \emph{Automatation and remote control}, 2:\penalty0 50--56, 1970.

\bibitem[Kumar \& Sarkar(2023)Kumar and Sarkar]{kumar2023streaming}
Syamantak Kumar and Purnamrita Sarkar.
\newblock Streaming pca for markovian data.
\newblock \emph{Advances in Neural Information Processing Systems}, 36:\penalty0 64650--64662, 2023.

\bibitem[Kumar et~al.(2025)Kumar, Pandey, and Sarkar]{kumar2025beyond}
Syamantak Kumar, Shourya Pandey, and Purnamrita Sarkar.
\newblock Beyond sin-squared error: Linear-time entrywise uncertainty quantification for streaming pca.
\newblock \emph{arXiv preprint arXiv:2506.12655}, 2025.

\bibitem[Li et~al.(2017)Li, Wang, Liu, and Zhang]{li2017diffusion}
Chris~Junchi Li, Mengdi Wang, Han Liu, and Tong Zhang.
\newblock Diffusion approximations for online principal component estimation and global convergence.
\newblock \emph{Advances in Neural Information Processing Systems}, 30, 2017.

\bibitem[Li et~al.(2023)Li, Fan, and Wei]{LiFanWei}
Gen Li, Wei Fan, and Yuting Wei.
\newblock Approximate message passing from random initialization with applications to <i>z</i><sub>2</sub> synchronization.
\newblock \emph{Proceedings of the National Academy of Sciences}, 120\penalty0 (31):\penalty0 e2302930120, 2023.
\newblock \doi{10.1073/pnas.2302930120}.
\newblock URL \url{https://www.pnas.org/doi/abs/10.1073/pnas.2302930120}.

\bibitem[Lunde et~al.(2021)Lunde, Sarkar, and Ward]{lunde2021bootstrapping}
Robert Lunde, Purnamrita Sarkar, and Rachel Ward.
\newblock Bootstrapping the error of oja's algorithm.
\newblock \emph{Advances in neural information processing systems}, 34:\penalty0 6240--6252, 2021.

\bibitem[Mardia et~al.(2024)Mardia, Kent, and Taylor]{mardia2024multivariate}
Kanti~V Mardia, John~T Kent, and Charles~C Taylor.
\newblock \emph{Multivariate analysis}.
\newblock John Wiley \& Sons, 2024.

\bibitem[Montanari \& Venkataramanan(2021)Montanari and Venkataramanan]{MontanariVenkataraman}
Andrea Montanari and Ramji Venkataramanan.
\newblock {Estimation of low-rank matrices via approximate message passing}.
\newblock \emph{The Annals of Statistics}, 49\penalty0 (1):\penalty0 321 -- 345, 2021.
\newblock \doi{10.1214/20-AOS1958}.
\newblock URL \url{https://doi.org/10.1214/20-AOS1958}.

\bibitem[Nie et~al.(2016)Nie, Kotlowski, and Warmuth]{nie2016online}
Jiazhong Nie, Wojciech Kotlowski, and Manfred~K Warmuth.
\newblock Online pca with optimal regret.
\newblock \emph{Journal of Machine Learning Research}, 17\penalty0 (173):\penalty0 1--49, 2016.

\bibitem[Oja(1982)]{oja1982simplified}
Erkki Oja.
\newblock Simplified neuron model as a principal component analyzer.
\newblock \emph{Journal of mathematical biology}, 15\penalty0 (3):\penalty0 267--273, 1982.

\bibitem[Oja \& Karhunen(1985)Oja and Karhunen]{oja1985stochastic}
Erkki Oja and Juha Karhunen.
\newblock On stochastic approximation of the eigenvectors and eigenvalues of the expectation of a random matrix.
\newblock \emph{Journal of mathematical analysis and applications}, 106\penalty0 (1):\penalty0 69--84, 1985.

\bibitem[Perry et~al.(2018)Perry, Wein, Bandeira, and Moitra]{perry2018optimality}
Amelia Perry, Alexander~S Wein, Afonso~S Bandeira, and Ankur Moitra.
\newblock Optimality and sub-optimality of pca i: Spiked random matrix models.
\newblock \emph{The Annals of Statistics}, 46\penalty0 (5):\penalty0 2416--2451, 2018.

\bibitem[Pham et~al.(2025)Pham, Rinaldo, and Sarkar]{pham2025time}
Tuan Pham, Alessandro Rinaldo, and Purnamrita Sarkar.
\newblock Time-uniform concentration bounds for iterative algorithms.
\newblock \emph{arXiv preprint arXiv:2511.18273}, 2025.

\bibitem[Robinson(2004)]{robinson2004introduction}
James~C Robinson.
\newblock \emph{An introduction to ordinary differential equations}.
\newblock Cambridge University Press, 2004.

\bibitem[Wang \& Lu(2016)Wang and Lu]{wang2016online}
Chuang Wang and Yue~M Lu.
\newblock Online learning for sparse pca in high dimensions: Exact dynamics and phase transitions.
\newblock In \emph{2016 IEEE Information Theory Workshop (ITW)}, pp.\  186--190. IEEE, 2016.

\bibitem[Wang et~al.(2017)Wang, Mattingly, and Lu]{wang2017scaling}
Chuang Wang, Jonathan Mattingly, and Yue~M Lu.
\newblock Scaling limit: Exact and tractable analysis of online learning algorithms with applications to regularized regression and pca.
\newblock \emph{arXiv preprint arXiv:1712.04332}, 2017.

\bibitem[Warmuth \& Kuzmin(2008)Warmuth and Kuzmin]{warmuth2008randomized}
Manfred~K Warmuth and Dima Kuzmin.
\newblock Randomized online pca algorithms with regret bounds that are logarithmic in the dimension.
\newblock \emph{Journal of Machine Learning Research}, 9\penalty0 (10):\penalty0 2287--2320, 2008.

\end{thebibliography}



\end{document}